\pdfoutput=1
 \documentclass[11pt]{amsart}
\usepackage[T1]{fontenc}
\usepackage[utf8]{inputenc}
\usepackage{prettyref}
\usepackage{mathtools}
\usepackage{dsfont}
\usepackage{amstext}
\usepackage{amsthm}
\usepackage{amssymb}
\usepackage{geometry}
\usepackage{microtype}
\usepackage[bookmarks=false,
 breaklinks=false,pdfborder={0 0 1},backref=false,colorlinks=false]
 {hyperref}
\hypersetup{pdftitle={Geometric mean quantization via adaptive approximation},
 pdfauthor={Marc Kesseböhmer and Aljoscha Niemann},
 pdfsubject={Mass-threshold quantization, entropy, and local dimensions},
 pdfkeywords={geometric-mean quantization, entropy dimension, mass-threshold stopping partitions, adaptive approximation algorithm, local dimension, Lq-spectrum, Bernoulli mixtures, Dirichlet series}}

\makeatletter
\numberwithin{equation}{section}
\numberwithin{figure}{section}

\newtheorem{theorem}{Theorem}[section]
\newtheorem{proposition}[theorem]{Proposition}\newtheorem{corollary}[theorem]{Corollary}\newtheorem{lemma}[theorem]{Lemma}\theoremstyle{definition}
\newtheorem{example}[theorem]{Example}\theoremstyle{remark}
\newtheorem{remark}[theorem]{Remark}\numberwithin{equation}{section}

\newcommand{\D}{\mathcal D}
\newcommand{\R}{\mathbb R}

\renewcommand{\log}{\operatorname{log}_{2}}
\renewcommand{\d}{\;\mathrm{d}}
\newcommand{\e}{\mathrm{e}}
\makeatother

\theoremstyle{plain}

\providecommand{\propositionname}{Proposition}
\providecommand{\theoremname}{Theorem}

\begin{document}
\title[Geometric mean quantization via adaptive approximation]{ Geometric mean quantization via adaptive approximation}
\author{Marc Kesseböhmer}
\address{Institute for Dynamical Systems, FB 3 -- Mathematics and Computer
Science, University of Bremen, Bibliothekstr. 5, 28359 Bremen, Germany}
\email{mhk@uni-bremen.de}
\author{Aljoscha Niemann}
\address{Institute for Dynamical Systems, FB 3 -- Mathematics and Computer
Science, University of Bremen, Bibliothekstr. 5, 28359 Bremen, Germany}
\email{niemann1@uni-bremen.de}
\date{September 13, 2026}
\subjclass[2000]{{Primary 28A80; Secondary 28A78, 60E05, 94A17}}
\keywords{Geometric-mean quantization, entropy dimension, mass-threshold stopping
partitions, adaptive approximation algorithm, local dimension, $L^{q}$-spectrum,
Bernoulli mixtures, Dirichlet series}
\begin{abstract}
Let $\nu$ be a compactly supported Borel probability measure on $\R^{d}$
with $\nu(B(x,r))\leq Cr^{a}$ for some $a>0$. Refine a dyadic cube
exactly when its mass is at least $t$, and let $\mathcal{L}_{\nu}(t)$
be the mean depth at which this refinement stops. We show that the
lower and upper geometric-mean quantization dimensions of $\nu$ are
the lower and upper limits of $\log(1/t)/\mathcal{L}_{\nu}(t)$. The
dimension exists precisely when $(q-1)\sum_{Q}\nu(Q)^{q}$, summed
over all dyadic cubes, converges as $q\downarrow1$, and it is then
determined by this limit. The mass-threshold formula yields harmonic
integral bounds in terms of the local dimensions and encloses entropy
and quantization dimensions in a common spectral interval. Convergence
in law of the local information rates is equivalent to convergence
of the rescaled spectra in a window of width $1/k$ around $q=1$;
the two dimensions are then the arithmetic and the harmonic mean of
the limit law, and we quantify their difference by sharp bounds and
variance identities. Without any convergence assumption, vanishing
threshold variance still forces equality of the corresponding lower
and upper dimensions. Bernoulli mixtures realise every local-dimension
law with compact support in $(0,1]$, and a regime-switching example
separates convergence in law from almost-everywhere convergence.
\end{abstract}

\maketitle

\section{Introduction}

Throughout, the natural logarithm is denoted by $\ln$ and the logarithm
to base 2 by $\log$. In quantization theory, a Borel probability
distribution is approximated by a set of at most $N$ points that
minimise the corresponding geometric-mean error, denoted by $\mathfrak{e}_{N,0}(\nu)$.
Its lower and upper dimensions (scaling limit), $\underline{D}_{0}(\nu)$
and $\overline{D}_{0}(\nu)$, need not equal the fixed-scale entropy
dimensions $\underline{h}(\nu)$ and $\overline{h}(\nu)$. All definitions
and dimension conventions are collected in Section~\ref{sec:notation}
and are not repeated here. The distinction between the two notions
is already visible in Graf--Luschgy's harmonic mixture rule \cite[Example~4.1]{GL04}
and Zhu's lower/upper mixture bounds \cite[Lemma~2.2]{Zhu12}.

\subsection{Main result}

\subsubsection*{\textbf{A mass-threshold formula}}

We work on the dyadic root cube $[0,1]^{d}$ with the maximum norm,
assume that the support of $\nu$ lies in the interior of the unit
cube, and impose the uniform polynomial mass bound \eqref{eq:uniform-dyadic-mass},
$\nu(Q)\leq C2^{-ak}$ for all dyadic cubes $Q$ of generation $k$,
which is equivalent to $\dim_{\infty}(\nu)>0$. For a threshold $0<t<1$
let $P_{\nu}(t)$ be the family of dyadic cubes at which the mass
first drops below $t$ along a dyadic path, let $\tau_{t}(x)$ be
the generation of the cube of $P_{\nu}(t)$ containing $x$, i.e.
\begin{equation}
\tau_{t}(x)\coloneqq\min\{k\geq1:\nu(Q_{k}(x))<t\}\label{eq:stopping-time}
\end{equation}
 where $Q_{k}(x)$ is the unique dyadic cube of level $k$, containg
$x$, and let $\mathcal{L}_{\nu}(t)=\int\tau_{t}\d\nu$ be the $\nu$-mean
depth of the partition; see \eqref{eq:mass-stopping-partition} and
\eqref{eq:stopping-cardinality-def}. Theorem~\ref{thm:adaptive-entropy}
establishes
\[
\underline{D}_{0}(\nu)=\liminf_{t\downarrow0}\frac{\log(1/t)}{\mathcal{L}_{\nu}(t)},\qquad\overline{D}_{0}(\nu)=\limsup_{t\downarrow0}\frac{\log(1/t)}{\mathcal{L}_{\nu}(t)}.
\]
The theorem compares an information threshold with the mean spatial
depth needed to reach it. New is this general dyadic formulation,
covering both limits, without a dynamical model, separation, or lower
child-to-parent mass bounds.

\subsubsection*{\textbf{The adaptive algorithm and its critical exponent}}

The family $P_{\nu}(t)$ is the minimal $t$-good partition $G_{1/t}$
of \cite[(1.1)]{KN25} for the monotone set function $\mathfrak{J}=\nu$,
the output of the adaptive approximation algorithm of \cite[Section~1.3]{KN25};
Remark~\ref{rem:adaptive-algorithm} records the dictionary between
the two frameworks. In \cite{KN25} such algorithms are governed by
the critical exponent
\[
\kappa(\mathfrak{J})\coloneqq\inf\Bigl\{ q>0:\sum_{Q\in\mathcal{D}}\mathfrak{J}(Q)^{q}<\infty\Bigr\},
\]
the abscissa of convergence of the $\mathfrak{J}$-series: it is the
zero of the partition function \cite[Lemma~2.4]{KN25}, it gives the
growth exponent of the cardinality $\#G_{x}$ of the output, and for
$\mathfrak{J}_{\nu,r}(Q)=\sup_{Q'\subseteq Q}\nu(Q')\ell(Q')^{r}$,
$r\neq0$, it gives the quantization dimensions of order $r$ \cite{KNZ,KNneg}.

Order zero is the case $\mathfrak{J}=\nu$, with critical exponent
$\kappa_{\nu}=1$: the dimension is carried not by the cardinality
of the adaptive partition but by its mean depth $\mathcal{L}_{\nu}(t)$,
the expected stopping time of the algorithm, and equivalently by the
rate at which the \emph{(Dirichlet) series} $Z_{\nu}(q)=\sum_{Q\in\mathcal{D}}\nu(Q)^{q}$
blows up at $\kappa_{\nu}$. Proposition~\ref{prop:mean-depth-dual}
identifies $\mathcal{L}_{\nu}(t)$ with the $\nu$-weighted number
of dyadic cubes of mass at least $t$ and shows that the threshold
partitions maximise the mean depth among dyadic partitions of given
cardinality. In the spirit of \cite{KNZ,KN25}, where the quantization
dimensions of order $r\neq0$ are the growth exponents of the number
of cubes the algorithm produces for $\mathfrak{J}_{\nu,r}$, the order-zero
quantization problem is thereby reduced to a pure counting problem
for the same algorithm, each subdivided cube counted with its mass.
Corollary~\ref{cor:tauberian} rests on the transform identity $Z_{\nu}(1+\lambda)=\int_{[0,\infty)}2^{-\lambda u}\d F_{\nu}(u)$
applied to the increasing counting function $F_{\nu}:u\mapsto\mathcal{L}_{\nu}(2^{-u})$.
Its Abelian half is elementary: $(q-1)\ln2\,Z_{\nu}(q)$ is a Gamma-kernel
average of $\mathcal{L}_{\nu}(t)/\log(1/t)$, whence
\[
\frac{1}{\overline{D}_{0}(\nu)}\leq\liminf_{q\downarrow1}(q-1)\ln2\,Z_{\nu}(q)\leq\limsup_{q\downarrow1}(q-1)\ln2\,Z_{\nu}(q)\leq\frac{1}{\underline{D}_{0}(\nu)}.
\]
Its Tauberian half, Karamata's theorem for the monotone function $F_{\nu}$,
turns the existence of the limit into the existence of the dimension:
$D_{0}(\nu)$ exists if and only if $(q-1)Z_{\nu}(q)$ converges as
$q\downarrow1$, and then
\[
D_{0}(\nu)=\Bigl(\lim_{q\downarrow1}(q-1)\ln2\sum_{Q\in\mathcal{D}}\nu(Q)^{q}\Bigr)^{-1}.
\]
The dimension at order zero is thus the reciprocal of the normalised
real-axis residue of $Z_{\nu}$ at its critical exponent (Remark~\ref{rem:residue}).

Section~\ref{sec:adaptive-spectrum} adds the entropy of the adaptive
partition, $\mathcal{H}_{\nu}(t)=\log(1/t)+O(\log\log(1/t))$ of \eqref{eq:stopping-entropy-def}:
the two dimensions are the lower and upper limits of the entropy per
unit depth $\mathcal{H}_{\nu}(t)/\mathcal{L}_{\nu}(t)$, an exact
weighted harmonic mean of the cube-wise dimensions $\log\nu(Q)/\log\ell(Q)$
(see Lemma~\ref{prop:harmonic-stopping}), the finite-scale form
of the harmonic formula below. Thus the quantization dimension is
the entropy per level of refinement of the adaptive tree, in the same
way as the entropy dimensions $\underline{h}(\nu),\overline{h}(\nu)$
are the lower and upper limits of the entropy per level $H_{k}(\nu)/k$
of the uniform tree; Example~\ref{ex:two-spectra} shows that the
normalised $L^{q}$-sums of the adaptive partition are not the ordinary
$L^{q}$-spectrum.

\subsubsection*{\textbf{A common spectral enclosure}}

Let $\overline{\beta}$ be the upper dyadic $L^{q}$-spectrum and
$[\sigma_{-},\sigma_{+}]=-\partial\overline{\beta}(1)$ its reflected
subdifferential at $1$, see \eqref{eq:reflected-subdifferential}.
Theorem~\ref{thm:spectral-enclosure} collects the classical bounds
of Heurteaux \cite[Theorems~3.1--3.2]{Heu07} and Zhu \cite[Theorem~2.1]{Zhu12}
in the form
\[
\begin{aligned}\sigma_{-} & \leq\dim_{*}(\nu)\leq\underline{h}(\nu)\leq\overline{h}(\nu)\leq\mathrm{Dim}^{*}(\nu)\leq\sigma_{+},\\
\sigma_{-} & \leq\dim_{*}(\nu)\leq\underline{D}_{0}(\nu)\leq\overline{D}_{0}(\nu)\leq\mathrm{Dim}^{*}(\nu)\leq\sigma_{+}.
\end{aligned}
\]
 Lemma~\ref{lem:critical-zero} locates the critical parameters $q_{r}$
of \cite{KNZ,KNneg} on $\overline{\beta}$ also for $-\dim_{\infty}(\nu)<r<0$,
and with \cite[Theorem~1.5(i)--(ii)]{KNneg} this gives $\lim_{r\uparrow0}\underline{D}_{r}(\nu)=\sigma_{-}$
and $\lim_{r\downarrow0}\overline{D}_{r}(\nu)=\sigma_{+}$ as conclusions,
not as regularity assumptions. Differentiability of $\overline{\beta}$
at $1$ collapses the enclosure to a point, the order-zero statement
of \cite[Theorem~1.5(iii)]{KNneg}; the converse fails even for an
almost everywhere constant local dimension (Example~\ref{ex:law-not-ae}).

\subsubsection*{\textbf{The rescaled spectra at $q=1$ and the two means}}

Write $X_{k}(x)=-\log\nu(Q_{k}(x))/k$ for the local information rates
and
\[
\Psi_{k}(\theta)\coloneqq k\ln2\,\beta_{k}\Bigl(1+\frac{\theta}{k\ln2}\Bigr),
\]
the finite-generation spectrum rescaled in a window of width $1/k$
around $q=1$. Proposition~\ref{prop:spectral-window} shows that
the laws of $X_{k}$ under $\nu$ converge weakly to a probability
measure $\varrho$ if and only if $\Psi_{k}(\theta)$ converges for
every $\theta\in\R$; then $\varrho$ is carried by the reflected
subdifferential $J_{\nu}=[\sigma_{-},\sigma_{+}]$, $\Psi=\lim_{k}\Psi_{k}$
is the $\log$-Laplace transform of $-\varrho$, and both dimensions
exist as the arithmetic and the harmonic mean of $\varrho$:
\[
h(\nu)=\int y\d\varrho(y)=-\Psi'(0),\qquad D_{0}(\nu)=\Bigl(\int y^{-1}\d\varrho(y)\Bigr)^{-1}=\Bigl(\int^{\infty}_{0}e^{\Psi(\theta)}\d\theta\Bigr)^{-1}.
\]
The window carries the whole law of the local dimension: the normalised
derivatives $\beta^{(n)}_{k}(1)/(k\ln2)^{n-1}$ are the cumulants
of $-X_{k}$, the negative of the first converging to $h(\nu)$ and
the second controlling the defect $h(\nu)-D_{0}(\nu)$. The hypothesis
is strictly weaker than almost-everywhere convergence: Example~\ref{ex:law-not-ae}
gives a measure with $\varrho=\delta_{s}$, hence $h(\nu)=D_{0}(\nu)=s$,
whose local dimension exists almost nowhere.

Almost-everywhere convergence is the case $\varrho=s_{*}\nu$: if
the dyadic local dimension $s(x)=\lim_{k}X_{k}(x)$ exists almost
everywhere, the display becomes
\[
h(\nu)=\int s\d\nu,\qquad D_{0}(\nu)=\Bigl(\int s^{-1}\d\nu\Bigr)^{-1},
\]
the classical arithmetic and harmonic means of the local dimension
(Remark~\ref{cor:local-harmonic}), which is also obtained directly
from Theorem~\ref{thm:adaptive-entropy}.

Theorem~\ref{thm:quantitative-gap} sharpens the comparison at the
level of $\varrho$. With $m_{\varrho}=\min\operatorname{supp}\varrho$
and $M_{\varrho}=\max\operatorname{supp}\varrho$, which satisfy $\sigma_{-}\leq m_{\varrho}\leq M_{\varrho}\leq\sigma_{+}$,
\[
\frac{m_{\varrho}M_{\varrho}}{m_{\varrho}+M_{\varrho}-h(\nu)}\leq D_{0}(\nu)\leq h(\nu),\qquad h(\nu)-D_{0}(\nu)\leq(\sqrt{M_{\varrho}}-\sqrt{m_{\varrho}})^{2},
\]
the defect is $h(\nu)-D_{0}(\nu)=\int(y-D_{0}(\nu))^{2}y^{-1}\d\varrho(y)$,
and the normalised curvature $\beta_{k}''(1)/(k\ln2)$ converges to
$\operatorname{Var}(\varrho)$. Under almost-everywhere convergence
$m_{\varrho}=\dim_{*}(\nu)$ and $M_{\varrho}=\mathrm{Dim}^{*}(\nu)$
are the lower Hausdorff and the upper packing dimension of $\nu$,
and the defect is enclosed in two tiers,
\[
h(\nu)-D_{0}(\nu)\leq\bigl(\sqrt{\mathrm{Dim}^{*}(\nu)}-\sqrt{\dim_{*}(\nu)}\bigr)^{2}\leq(\sqrt{\sigma_{+}}-\sqrt{\sigma_{-}})^{2},
\]
classical dimensions inside and the spectrum outside, as in the enclosure
above; the inner bound is sharp, the outer one is computable from
the spectrum alone and can be strictly weaker (Example~\ref{ex:law-not-ae}).
Bernoulli mixtures with a random parameter (Section~\ref{sec:bernoulli-mixtures})
realise every prescribed $\varrho$ with compact support in $(0,1]$,
hence every admissible pair $(h(\nu),D_{0}(\nu))$ in the displayed
region, all with full support.

\subsubsection*{\textbf{Integral bounds without convergence}}

Without any convergence assumption, with $\underline{s}$ and $\overline{s}$
the lower and upper dyadic local dimensions, Proposition~\ref{prop:local-liminf-bounds}
gives
\[
\left(\int1/\underline{s}\d\nu\right)^{-1}\leq\underline{D}_{0}(\nu)\leq\overline{D}_{0}(\nu)\leq\left(\int1/\overline{s}\d\nu\right)^{-1},\qquad\int\underline{s}\d\nu\leq\underline{h}(\nu)\leq\overline{h}(\nu)\leq\int\overline{s}\d\nu.
\]
The harmonic bounds follow from Theorem~\ref{thm:adaptive-entropy}
by Fatou's lemma and its reverse, since the dimension estimate $\varsigma_{t}(x)\coloneqq\log(1/t)/\tau_{t}(x)$
of the algorithm has lower and upper limits $\underline{s}(x)$ and
$\overline{s}(x)$ and $1/\varsigma_{t}$ is bounded. Replacing the
integral means by essential extrema gives Zhu's extremal chain $\dim_{*}(\nu)\leq\underline{D}_{0}(\nu)\leq\overline{D}_{0}(\nu)\leq\mathrm{Dim}^{*}(\nu)$
\cite[Theorem~2.1]{Zhu12} (Remark~\ref{rem:heurteaux-dimensions});
conversely, Remark~\ref{prop:recovery-zhu} assembles the integral
bounds from Zhu's chain and the harmonic mixture rule of Graf--Luschgy
\cite[Example~4.1]{GL04}. The two formulations are equivalent, but
the mass-threshold formula yields the integral form, which appears
to be new, directly and for both limits at once; the arithmetic bounds
are known \cite{FLR02,ST12}.

\subsubsection*{\textbf{The upper variance of the local dimension without convergence}}

Theorem~\ref{thm:adaptive-entropy} concerns the harmonic mean of
the dimension estimate $\varsigma_{t}$: $\log(1/t)/\mathcal{L}_{\nu}(t)=(\int\varsigma^{-1}_{t}\d\nu)^{-1}$
exactly. Its arithmetic mean $\mathsf{h}_{\nu}(t)\coloneqq\int\varsigma_{t}\d\nu$
and its variance $\mathsf{V}_{\nu}(t)\coloneqq\operatorname{Var}_{\nu}(\varsigma_{t})$
define, without any convergence hypothesis, the upper threshold variance
$\overline{\mathsf{V}}(\nu)\coloneqq\limsup_{t\downarrow0}\mathsf{V}_{\nu}(t)$
of the local dimension. Thus $\mathsf{h}_{\nu}(t)$ is the threshold-indexed
counterpart of the entropy per level $h_{k}(\nu)=H_{k}(\nu)/k=\int X_{k}\d\nu$,
whose lower and upper limits are the entropy dimensions $\underline{h}(\nu),\overline{h}(\nu)$;
it is not the entropy $\mathcal{H}_{\nu}(t)$ of the stopping partition,
which is of order $\log(1/t)$. Both indexings obey the bounds $\int\underline{s}\d\nu\leq\cdot\leq\int\overline{s}\d\nu$
of Proposition~\ref{prop:local-liminf-bounds}, and they have the
same limits under vanishing threshold variance and under convergence
in law (Proposition~\ref{prop:threshold-variance}(3),(4)). Proposition~\ref{prop:threshold-variance}
gives at every threshold
\[
\mathsf{h}_{\nu}(t)-\frac{\log(1/t)}{\mathcal{L}_{\nu}(t)}=\int\frac{(\varsigma_{t}-\log(1/t)/\mathcal{L}_{\nu}(t))^{2}}{\varsigma_{t}}\d\nu,
\]
with two-sided bounds by $\mathsf{V}_{\nu}(t)$, so that
\begin{align*}
\overline{\mathsf{V}}(\nu)=0 & \iff\lim_{t\downarrow0}\mathsf{h}_{\nu}(t)-\frac{\log(1/t)}{\mathcal{L}_{\nu}(t)}=0\\
 & \implies\underline{h}(\nu)=\liminf_{t\downarrow0}\mathsf{h}_{\nu}(t)=\underline{D}_{0}(\nu),\ \overline{h}(\nu)=\limsup_{t\downarrow0}\mathsf{h}_{\nu}(t)=\overline{D}_{0}(\nu),\ \beta_{k}''(1)=o(k).
\end{align*}
Under the hypothesis of Proposition~\ref{prop:spectral-window} $\left(\varsigma_{t}\right)$
converges in law to $\varrho$ as well, $\overline{\mathsf{V}}(\nu)=\operatorname{Var}(\varrho)=\lim_{k}\beta_{k}''(1)/(k\ln2)$,
and $\overline{\mathsf{V}}(\nu)=0$, $\beta_{k}''(1)=o(k)$ and $h(\nu)=D_{0}(\nu)$
are equivalent. Without convergence the converse implications fail
(Remark~\ref{rem:generation-variance}): the four dimensions are
extremal values that do not see spread at intermediate scales, and
$\beta_{k}''(1)=o(k)$ is strictly weaker than $\overline{\mathsf{V}}(\nu)=0$;
the threshold, not the generation, is the consistent index for a variance
of the quantization defect.

After fixing the notation, we state and prove the mass-threshold formula
(Section~\ref{sec:stopping}), including the dual problem, the Tauberian
form and the entropy-per-level formulation. The general local-dimension
bounds and the spectral enclosure follow without local convergence
(Sections~\ref{sec:local-principle} and~\ref{sec:entropy}). Only
then do we impose convergence or regularity and derive the equality
and comparison results (Section~\ref{sec:convergence}), followed
by the Bernoulli examples (Section~\ref{sec:bernoulli-mixtures}).
Section~\ref{sec:literature} records the relation with earlier work,
the provenance of each statement, and the distinction between dimension
identities and quantization-coefficient estimates.

\subsection{Relation with earlier results}

\label{sec:literature}

Graf--Luschgy \cite[Example~4.1]{GL04} prove the harmonic rule for
two measures under logarithmic moment assumptions, Zhu \cite[Lemma~2.2]{Zhu12}
records lower and upper harmonic bounds, and \cite[Proposition~3.8]{Zhu15}
the finite-component rule; the mixture statements used here are attributed
reformulations. The mass-threshold antichain $\{\sigma:p_{\sigma^{-}}\geq t>p_{\sigma}\}$
occurs in \cite[Section~5, Lemma~5.7]{GL04}, and the ratio $s_{j}=\sum_{\sigma\in\Lambda_{j}}p_{\sigma}\log p_{\sigma}\big/\sum_{\sigma\in\Lambda_{j}}p_{\sigma}\log\|f_{\sigma}'\|$
of \cite[Section~4]{Zhu12}, as well as the antichain estimates of
\cite[Lemma~2.7]{Zhu15}, are direct predecessors of the ratio $\mathcal{H}_{\nu}(t)/\mathcal{L}_{\nu}(t)$.
The finite recurrent self-similar dimension formula of Roychowdhury--Snigireva
\cite{RS15} is an instance of Corollary~\ref{cor:KN-differentiable}:
in the strongly separated, irreducible setting with positive stationary
entropy the $L^{q}$-spectrum is differentiable at $1$ and $\dim_{\infty}>0$,
so \cite[Theorem~1.5(iii)]{KNneg} gives $D_{0}=h=-\beta'(1)$; the
same applies to the settings of \cite{RS21} whenever these two hypotheses
hold. Finally, a dimension identity only gives $\mathfrak{e}_{N,0}=N^{-1/D+o(1)}$;
the coefficient statements $0<\liminf_{N}N^{1/D}\mathfrak{e}_{N,0}\leq\limsup_{N}N^{1/D}\mathfrak{e}_{N,0}<\infty$
of \cite[Theorem~5.11]{GL04} and \cite[Theorem~4.1]{Zhu12}, and
the convergence-order results of \cite{Zhu15}, are strictly stronger
and are not subsumed by the dimension-level consequences of the mass-threshold
formula or of the negative-order theorems.

\subsection{Notation and standing hypothesis}

\label{sec:notation}

Let $\nu$ be a compactly supported Borel probability measure on $\R^{d}$.
Distances are taken in the maximum norm. Equivalent norms multiply
quantization errors by factors bounded independently of $N$, and
hence do not change their dimensions.

For $N\geq1$, the geometric-mean quantization error is 
\begin{equation}
\mathfrak{e}_{N,0}(\nu)\coloneqq\inf_{1\leq\#A\leq N}\exp\!\left(\int\ln d(x,A)\d\nu(x)\right).\label{eq:e0}
\end{equation}
The infimum is over non-empty finite sets $A\subset\R^{d}$, with
$d(x,A)\coloneqq\inf_{a\in A}\|x-a\|_{\infty}$. The natural logarithm
and exponential in \eqref{eq:e0} give the usual geometric mean \cite{GL04,KNneg}.
For comparison with neighbouring orders, set 
\[
\mathfrak{e}_{N,r}(\nu)\coloneqq\inf_{1\leq\#A\leq N}\left(\int d(x,A)^{r}\d\nu(x)\right)^{1/r},\qquad r\ne0.
\]
For any order $r$, including $r=0$, for which the errors are positive
and finite, define 
\begin{equation}
\underline{D}_{r}(\nu)\coloneqq\liminf_{N\to\infty}\frac{\log N}{-\log\mathfrak{e}_{N,r}(\nu)},\qquad\overline{D}_{r}(\nu)\coloneqq\limsup_{N\to\infty}\frac{\log N}{-\log\mathfrak{e}_{N,r}(\nu)}.\label{eq:quant-dims}
\end{equation}
Their common value, when it exists, is denoted by $D_{r}(\nu)$. The
ratio is independent of the logarithm base. For the order-zero proofs
we use the binary logarithmic error 
\begin{equation}
B_{\nu}(N)\coloneqq-\log\mathfrak{e}_{N,0}(\nu)=\sup_{1\leq\#A\leq N}\int-\log d(x,A)\d\nu(x),\qquad N\geq1.\label{eq:Bnu}
\end{equation}
Thus reciprocating the normalised growth rates $B_{\nu}(N)/\log N$
interchanges lower and upper limits in \eqref{eq:quant-dims}.

Let $\D_{k}$ denote the half-open dyadic cubes of side length $2^{-k}$.
Zero-mass cubes are omitted from mass sums and stopping partitions;
they are retained explicitly in full-tree cardinality identities.
Put 
\[
H_{k}(\nu)\coloneqq-\sum_{Q\in\D_{k}}\nu(Q)\log\nu(Q),\qquad h_{k}(\nu)\coloneqq\frac{H_{k}(\nu)}{k}.
\]
We write 
\[
\underline{h}(\nu)\coloneqq\liminf_{k\to\infty}h_{k}(\nu),\qquad\overline{h}(\nu)\coloneqq\limsup_{k\to\infty}h_{k}(\nu).
\]

When $\underline{h}(\nu)=\overline{h}(\nu)$, their common value is
denoted by $h(\nu)$. These fixed-scale entropy dimensions are Rényi
information dimensions \cite{Renyi59}.  Two entropy-like quantities
of Section~\ref{sec:adaptive-spectrum} are indexed by a mass threshold
$t$ instead of a generation $k$: $\mathcal{H}_{\nu}(t)$, the entropy
of the stopping partition, and the black-board $\mathds{h}_{\nu}(t)$,
the arithmetic mean of the local dimension estimate produced by the
adaptive algorithm, the threshold analogue of $h_{k}(\nu)$.

For $q\in\R$, define the finite-scale dyadic $L^{q}$-spectrum by
\begin{equation}
\beta_{k}(q)\coloneqq\frac{1}{k}\log\sum_{Q\in\D_{k},\,\nu(Q)>0}\nu(Q)^{q}.\label{eq:finite-beta}
\end{equation}
Thus $\beta_{k}(1)=0$. We use the spectrum, when the limit exists,
and its upper version:
\[
\beta(q)\coloneqq\lim_{k\to\infty}\beta_{k}(q),\qquad\overline{\beta}(q)\coloneqq\limsup_{k\to\infty}\beta_{k}(q).
\]

The ordinary-spectrum approach to positive-order quantization is developed
in \cite{KNZ}, and the negative-order partition-function framework
in \cite{KNneg}.

The upper spectrum is finite and convex on $(0,\infty)$ (see the
proof of Proposition~\ref{prop:entropy-only}), so its one-sided
derivatives at $1$ exist. Put
\begin{equation}
\sigma_{-}\coloneqq-\overline{\beta}'_{+}(1),\qquad\sigma_{+}\coloneqq-\overline{\beta}'_{-}(1),\qquad J_{\nu}\coloneqq-\partial\overline{\beta}(1)=[\sigma_{-},\sigma_{+}],\label{eq:reflected-subdifferential}
\end{equation}
the reflected subdifferential of $\overline{\beta}$ at $1$. It is
a single point exactly when $\overline{\beta}$ is differentiable
at $1$.

For later comparison, let 
\begin{equation}
\dim_{*}(\nu)\coloneqq\inf_{\nu(E)>0}\dim_{H}E,\qquad\dim^{*}(\nu)\coloneqq\inf_{\nu(E)=1}\dim_{H}E.\label{eq:measure-H-definitions}
\end{equation}
The infima run over Borel sets. Replacing Hausdorff dimension by packing
dimension defines $\mathrm{Dim}_{*}(\nu)$ and $\mathrm{Dim}^{*}(\nu)$;
see \cite[Definition~2.1]{Heu07}. These are dimensions of positive-
or full-mass sets, not dimensions of the topological support.

For the dyadic stopping and spectral results, normalise the measure
so that its support lies in the interior of the root cube $\mathcal{Q}=[0,1]^{d}$;
this keeps all mass away from the outer faces of the half-open cubes.
Use a nested, disjoint partition of $\mathcal{Q}$, assigning outer
boundary faces to the adjacent end cubes. Thus $\D_{0}=\{\mathcal{Q}\}$,
$\#\D_{k}=2^{dk}$, and no zero-boundary-mass assumption is needed.
Write $Q_{k}(x)$ for the unique cube containing $x$, and $\ell(Q)=2^{-k}=\operatorname{diam}Q$
for $Q\in\D_{k}$.

For $x\in\mathcal{Q}$ and $k\geq1$ put
\begin{equation}
I_{k}(x)\coloneqq-\log\nu(Q_{k}(x)),\qquad X_{k}(x)\coloneqq\frac{I_{k}(x)}{k},\label{eq:local-information}
\end{equation}
and define the lower and upper dyadic local dimensions
\begin{equation}
\underline{s}(x)\coloneqq\liminf_{k\to\infty}X_{k}(x),\qquad\overline{s}(x)\coloneqq\limsup_{k\to\infty}X_{k}(x).\label{eq:local-exponents}
\end{equation}
When they coincide we write $s(x)$ for the common value. These are
explicitly dyadic information exponents; their relation with the usual
ball local dimensions is recorded in Lemma~\ref{lem:balls-restrictions}.

Whenever a uniform mass hypothesis is required below, it is the following
condition: 
\begin{equation}
\nu(Q)\leq C2^{-ak}\quad(Q\in\D_{k},\ k\geq0),\qquad a>0,\quad C\geq1.\label{eq:uniform-dyadic-mass}
\end{equation}
Existence of such $a,C$ is equivalent to 
\begin{equation}
\dim_{\infty}(\nu)\coloneqq\liminf_{k\to\infty}\frac{-\log\max_{Q\in\D_{k}}\nu(Q)}{k}>0.\label{eq:positive-infinity-dimension}
\end{equation}
Indeed, every $0<a<\dim_{\infty}(\nu)$ satisfies \eqref{eq:uniform-dyadic-mass}
after enlarging $C$ to cover finitely many generations. Conversely,
\eqref{eq:uniform-dyadic-mass} implies $\dim_{\infty}(\nu)\geq a$.
The hypothesis implies that $\nu$ has no atoms. We always discard
the null set on which some $Q_{k}(x)$ has zero mass. The same exponent
ensures positive finite neighbouring-order errors for $-a<r<0$; order-zero
finiteness is proved in Lemma~\ref{lem:stopping-codebook}.

\section{Mass-adaptive stopping partitions}

\label{sec:stopping}

Assume \eqref{eq:uniform-dyadic-mass}. Fixed-generation entropy averages
information at a prescribed spatial scale. The order-zero analogue
fixes a mass threshold and averages the depth required to reach it.
Such antichains are classical in self-similar and self-conformal quantization
\cite[Section~5]{GL04}, \cite[Section~4]{Zhu12}. We prove a general
dyadic characterisation for both lower and upper dimensions, using
the estimate \eqref{eq:arbitrary-codebook-bound} for arbitrary codebooks
rather than a model-specific separation argument. The stopping partitions
are the output of the adaptive approximation algorithm of \cite{KN25};
the precise dictionary is given in Remark~\ref{rem:adaptive-algorithm}.

\subsection{Stopping quantities and the main theorem}

For $0<t<1$, let $Q^{-}$ denote the immediate dyadic parent of $Q$
and define 
\begin{equation}
P_{\nu}(t)\coloneqq\{Q\in\D\setminus\D_{0}:0<\nu(Q)<t\leq\nu(Q^{-})\}.\label{eq:mass-stopping-partition}
\end{equation}
Logarithmic thresholds are written out as $\log(1/t)$; where a symbol
is needed we abbreviate $\ell_{t}\coloneqq\log(1/t)$, so that $t=2^{-\ell_{t}}$
and $\ell_{t}$ increases from $0$ to $\infty$ as $t\downarrow0$.
Set 
\begin{equation}
M_{\nu}(t)\coloneqq\#P_{\nu}(t),\qquad\mathcal{L}_{\nu}(t)\coloneqq-\sum_{Q\in P_{\nu}(t)}\nu(Q)\log\ell(Q).\label{eq:stopping-cardinality-def}
\end{equation}
Recall the defintion in \prettyref{eq:stopping-time} for the stopping
depth $\tau_{t}\left(x\right)$. By Lemma~\ref{lem:stopping-size}
below, $P_{\nu}(t)$ is a finite partition modulo a $\nu$-null set.
Consequently, 
\begin{equation}
\mathcal{L}_{\nu}(t)=\int\tau_{t}(x)\d\nu(x).\label{eq:depth-as-integral}
\end{equation}

The pointwise counterpart of the ratio in Theorem~\ref{thm:adaptive-entropy}
below is the dimension estimate produced by the adaptive algorithm
at threshold $t$,
\begin{equation}
\varsigma_{t}(x)\coloneqq\frac{\log(1/t)}{\tau_{t}(x)},\label{eq:threshold-estimate}
\end{equation}
so that, by \eqref{eq:depth-as-integral}, $\log(1/t)/\mathcal{L}_{\nu}(t)=\bigl(\int\varsigma^{-1}_{t}\d\nu\bigr)^{-1}$
is the harmonic mean of $\varsigma_{t}$. In terms of the information
rates, $\varsigma_{t}$ is the rate stopped at $\tau_{t}$ and corrected
for the overshoot at the crossing: since $I_{\tau_{t}(x)-1}(x)\leq\log(1/t)<I_{\tau_{t}(x)}(x)$,
\begin{equation}
\varsigma_{t}(x)=X_{\tau_{t}(x)}(x)-\frac{I_{\tau_{t}(x)}(x)-\log(1/t)}{\tau_{t}(x)},\qquad0\leq X_{\tau_{t}(x)}(x)-\varsigma_{t}(x)<\frac{I_{\tau_{t}(x)}(x)-I_{\tau_{t}(x)-1}(x)}{\tau_{t}(x)}.\label{eq:stopped-rate}
\end{equation}
The correction is what makes $\varsigma_{t}$, rather than the stopped
rate itself, the right object: the stopping times omit the generations
at which $k\mapsto I_{k}(x)$ does not increase, so the lower limit
of $X_{\tau_{t}(x)}(x)$ as $t\downarrow0$ can exceed $\underline{s}(x)$,
whereas $\varsigma_{t}(x)=\log(1/t)/\tau_{t}(x)$ runs through all
intermediate values and satisfies \eqref{eq:first-crossing} below.
Since $k\mapsto I_{k}(x)$ is non-decreasing and $\tau_{t}(x)$ is
its generalised inverse, $\tau_{t}(x)=\#\{k\geq0:I_{k}(x)\leq\log(1/t)\}$,
and therefore
\begin{equation}
\liminf_{t\downarrow0}\varsigma_{t}(x)=\underline{s}(x),\qquad\limsup_{t\downarrow0}\varsigma_{t}(x)=\overline{s}(x).\label{eq:first-crossing}
\end{equation}
In particular $\varsigma_{t}(x)\to s(x)$ whenever the local dimension
$s(x)$ exists.

\begin{theorem}\label{thm:adaptive-entropy}Assume \eqref{eq:uniform-dyadic-mass}.
Then 
\begin{equation}
\underline{D}_{0}(\nu)=\liminf_{t\downarrow0}\frac{\log(1/t)}{\mathcal{L}_{\nu}(t)},\qquad\overline{D}_{0}(\nu)=\limsup_{t\downarrow0}\frac{\log(1/t)}{\mathcal{L}_{\nu}(t)}.\label{eq:threshold-lower}
\end{equation}
Both values lie in the reflected subdifferential $J_{\nu}=[\sigma_{-},\sigma_{+}]$
of \eqref{eq:reflected-subdifferential}. In both formulas one may
replace $t\downarrow0$ by $t=N^{-c}$ for any fixed $c>0$.

\end{theorem}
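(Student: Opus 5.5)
Our plan is to sandwich the logarithmic error $B_{\nu}(N)$ between mean depths of threshold partitions whose thresholds are polynomially related to $1/N$. We will prove two inequalities. First, an \emph{upper codebook bound}: for every $A$ with $\#A\le N$,
\[
\int-\log d(x,A)\d\nu\le\mathcal{L}_{\nu}(t)+O(\log\log N)
\]
for a threshold $t=N^{-1}(\log N)^{-c'}$, or more crudely $t=N^{-1-\varepsilon}$ with error $o(\log N)$. Second, a \emph{lower bound} obtained by placing points: since $P_{\nu}(t)$ is a finite partition (Lemma~\ref{lem:stopping-size}), with $M_{\nu}(t)\le C'/t$, we put a small grid of points in each $Q\in P_{\nu}(t)$. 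This gives $B_{\nu}(N)\ge\mathcal{L}_{\nu}(t)-O(1)$ for some $N\asymp M_{\nu}(t)\le C'/t$. The $O(1)$ loss comes from the mean of $-\log$ of the distance to a grid inside a cube of side $\ell(Q)$, which is $\log(1/\ell(Q))+O(1)$ once $\nu$ has no atoms near the grid points. We will control this using \eqref{eq:uniform-dyadic-mass} and Lemma~\ref{lem:stopping-codebook}, which handles order-zero finiteness.

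With both bounds in hand, the formulas follow from
\[
\frac{B_{\nu}(N)}{\log N}\approx\frac{\mathcal{L}_{\nu}(t)}{\log(1/t)}.
\]
We then pass to reciprocals, which swaps lower and upper limits as noted after \eqref{eq:Bnu}. The passage uses that $\mathcal{L}_{\nu}$ is monotone and grows at most linearly in $\log(1/t)$, since $\tau_{t}\le\log(1/t)/a+O(1)$ by \eqref{eq:uniform-dyadic-mass}. Hence changing $\log(1/t)$ by a factor $1+o(1)$, or replacing it by $c\log N$, changes $\mathcal{L}_{\nu}$ only by $o(\log(1/t))$. Concretely, $\mathcal{L}_{\nu}(t^{1+\varepsilon})\le\mathcal{L}_{\nu}(t)+\varepsilon\log(1/t)/a+O(1)$, because beyond depth $\tau_{t}(x)$ reaching the lower threshold needs the mass to drop by a further factor $t^{\varepsilon}$. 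That extra drop costs at most $\varepsilon\log(1/t)/a$ additional generations, using $\nu(Q')\le C2^{-a\cdot\text{gen}}$ relative to the threshold. This monotonicity-plus-regularity argument also gives the freedom $t=N^{-c}$, and shows that the limits along $t\downarrow0$ equal those along the sequence $t=N^{-c}$.

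The main obstacle is the upper bound for an arbitrary codebook, the estimate \eqref{eq:arbitrary-codebook-bound}. We plan to split $\int-\log d(x,A)\d\nu$ according to $\tau_{t}(x)$. Write $-\log d(x,A)\le\tau_{t}(x)+\log^{+}\bigl(2^{-\tau_{t}(x)}/d(x,A)\bigr)+O(1)$. The excess term is nonzero only when $x$ lies within distance $2^{-\tau_{t}(x)-j}$ of a point of $A$. For each $j$, the set of such $x$ is covered by at most $3^{d}N$ cubes of generation $\tau_{t}(x)+j$ around the codepoints. Each cube $Q'$ in this cover lies inside a cube of $P_{\nu}(t)$ or at the same depth, so its mass is $<t$. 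Hence the $j$-th layer has $\nu$-mass at most $3^{d}Nt$. By \eqref{eq:uniform-dyadic-mass} it also has mass $\lesssim N2^{-a(\text{gen})}$; summing over the generations dominating each codepoint gives mass at most $\min(3^{d}Nt\cdot\mathrm{poly},\,\dots)$. Summing the layers over $j$ gives an excess of at most $\sum_{j}\min(1,\,3^{d}Nt\,2^{-aj}\cdot\text{const})\,$, which is $O(\log(Nt)^{+})+O(1)$. With $t=1/N$ this is $O(1)$, modulo the bookkeeping that one codepoint meets cubes of several stopping depths. The first thing we will try is to handle that bookkeeping by charging each codepoint $a$ to the chain of dyadic cubes containing it, and using that along this chain at most $O(\log(1/t)/a)$ generations have mass in $[t,\cdot)$.

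Finally, for membership in $J_{\nu}$, we use Proposition~\ref{prop:mean-depth-dual} together with the enclosure $\sigma_{-}\le\dim_{*}\le\underline{D}_{0}\le\overline{D}_{0}\le\mathrm{Dim}^{*}\le\sigma_{+}$ of Theorem~\ref{thm:spectral-enclosure}. Alternatively, directly: $\{\tau_{t}>k\}$ has measure $\sum_{Q\in\D_{k},\nu(Q)\ge t}\nu(Q)\le t^{1-q}2^{k\beta_{k}(q)}$ for $q<1$, and the symmetric estimate holds for $q>1$. Choosing $q$ near $1$ and $k$ comparable to $\log(1/t)/s$ bounds the ratio $\log(1/t)/\mathcal{L}_{\nu}(t)$ by the one-sided derivatives of $\overline{\beta}$.
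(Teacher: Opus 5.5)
Your architecture is the paper's: centres of the stopping cubes for the lower bound, an arbitrary-codebook bound split by $\tau_t$, thresholds $N^{-(1\pm\varepsilon)}$, and Markov bounds with $\beta_k$ for $J_\nu$. But two quantitative claims you rely on are false.

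\emph{Passing between thresholds.} The inequality $\mathcal{L}_\nu(t^{1+\varepsilon})\le\mathcal{L}_\nu(t)+\varepsilon\log(1/t)/a+O(1)$ is false, and so is the claim that changing $\log(1/t)$ by a factor $1+o(1)$ changes $\mathcal{L}_\nu$ by $o(\log(1/t))$. Both would need masses to decay \emph{relative} to the stopping cube. But \eqref{eq:uniform-dyadic-mass} is an absolute bound: after $\nu(Q_k(x))$ falls below $t$, the mass may stay constant for about $k$ further generations. Take the construction of Remark~\ref{rem:generation-variance} with fair digits. Every positive-mass cube of generation $g$ then has mass $2^{-A(g)}$. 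So $\tau_t=k_i+O(1)$ everywhere when $\log(1/t)=A(k_i)-1$, but $\tau_t=2k_i+O(1)$ everywhere when $\log(1/t)=A(k_i)+1$.

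The same measure refutes your claim that the codebook excess is $O(1)$ at $t=1/N$. For $N=2^{A(k_i)-1}$, put one point at the left end of each positive-mass cube of generation $k_i-1$. This gives $B_\nu(N)\ge\frac32k_i-O(1)$, while $\mathcal{L}_\nu(1/N)=k_i+O(1)$.

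The repair is that no two thresholds ever need to be compared. Take $t_N=N^{-(1+\varepsilon)}$. From $B(N)\le\mathcal{L}_\nu(t_N)+o(1)$ you get $B(N)/\log N\le(1+\varepsilon)\,\mathcal{L}_\nu(t_N)/\log(1/t_N)+o(1)$. Sampling along $t_N$ costs nothing, by monotonicity alone (Lemma~\ref{lem:admissible-sequence}), and $\varepsilon\downarrow0$ is taken only after the lower or upper limit. Symmetrically, $M_\nu(N^{-1/(1+\varepsilon)})\le N$ for large $N$ handles the other side for every $N$. The freedom $t=N^{-c}$ likewise follows from monotonicity, not from any regularity of $\mathcal{L}_\nu$.

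\emph{The codebook bound.} Your layer estimate $3^dNt\,2^{-aj}$ fails for the same reason. Indexing layers by $j$ relative to $\tau_t(x)$ is also what creates the bookkeeping problem you leave open. Index by absolute generation instead:
$\log^{+}\bigl(2^{-\tau_t(x)}/d(x,A)\bigr)\le\#\{g\ge\tau_t(x):d(x,A)<2^{-g}\}$.
For fixed $g$, the set $\{d(\cdot,A)<2^{-g}\}\cap\{\tau_t\le g\}$ lies in at most $3^dN$ cubes of $\D_g$, each of mass at most $\min\{t,C2^{-ag}\}$. Summing over $g$ bounds the excess by $C_3Nt(1+\log(1/t))$. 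This is $o(1)$ at $t=N^{-(1+\varepsilon)}$ but only $O(\log N)$ at $t=1/N$, consistent with the example above. This is Steps~2--4 of Lemma~\ref{lem:stopping-codebook}.

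\emph{Minor points.}
\begin{itemize}
\item $M_\nu(t)\le C'/t$ does not hold in general; Lemma~\ref{lem:stopping-size} has an extra factor $1+\log(1/t)$. This is harmless, since only $\log M_\nu(t)\sim\log(1/t)$ is used.
\item The lower bound needs no grid and no atom control. The centre of $Q$ already gives $d(x,A_t)\le\ell(Q)/2$, hence $B(M_\nu(t))\ge\mathcal{L}_\nu(t)$ exactly.
\item In your Markov step the cases are swapped. The bound $\nu\{\tau_t>k\}\le t^{1-q}2^{k\beta_k(q)}$ holds for $q>1$; for $q<1$ the same bound holds for $\nu\{\tau_t\le k\}$.
\item Quoting Theorem~\ref{thm:spectral-enclosure} for $J_\nu$ would be circular, since its outer quantization bounds are derived from the present theorem (or from \cite{KNneg}). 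The direct route of Lemma~\ref{lem:stopping-size}(2) is the right one.
\end{itemize}
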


In particular, the choice $t=1/N$ and \eqref{eq:depth-as-integral}
give the stopping-time formulation
\begin{equation}
\underline{D}_{0}(\nu)=\liminf_{N\to\infty}\frac{\log N}{\int\tau_{1/N}\d\nu},\qquad\overline{D}_{0}(\nu)=\limsup_{N\to\infty}\frac{\log N}{\int\tau_{1/N}\d\nu}.\label{eq:threshold-time-lower}
\end{equation}
The numerator is a prescribed information threshold; the denominator
is the mean logarithmic spatial depth needed to cross it.

\begin{remark}[The adaptive approximation algorithm]\label{rem:adaptive-algorithm}In
the terminology of \cite{KN25}, $P_{\nu}(t)$ is the minimal $t$-good
partition $G_{1/t}$ for $\mathfrak{J}=\nu$ restricted to the dyadic
cubes of positive mass. The dual problem of \cite[Section~1.2]{KN25},
minimising the largest mass over partitions with at most $N$ cubes,
enters the lower bound \eqref{eq:stopping-centres} below through
the condition $M_{\nu}(t)\leq N$; its order-zero counterpart, maximising
the mean depth over partitions of bounded cardinality, is solved by
the threshold partitions as well (Proposition~\ref{prop:mean-depth-dual}).
The results of \cite{KN25} concern the growth exponent of the cardinality
$M_{\nu}(t)$, which equals one for $\mathfrak{J}=\nu$ by \cite[Corollary~1.6]{KN25}
and is made quantitative in Lemma~\ref{lem:stopping-size}; Theorem~\ref{thm:adaptive-entropy}
uses instead the $\nu$-mean depth $\mathcal{L}_{\nu}(t)=\int\tau_{t}\d\nu$
of the same partition, a first moment of the running time of the algorithm
rather than a count of its output. For $r\neq0$ the quantization
dimensions of order $r$ are obtained in \cite{KNZ,KNneg} from the
counting exponents for $\mathfrak{J}_{\nu,r}$; Lemma~\ref{lem:critical-zero}
and Theorem~\ref{thm:spectral-enclosure} connect those to the order-zero
quantities through the ordinary upper spectrum.\end{remark}

\begin{remark}[Atomlessness alone is insufficient]\label{rem:ahlfors-necessary}The
hypothesis \eqref{eq:uniform-dyadic-mass} cannot be weakened to the
absence of atoms. Let $d=1$ and let $\nu$ be the probability measure
on $[0,1)$ with $\nu([0,2^{-k}))=1/k$ for all $k\geq1$, spread
uniformly over each interval $R_{k}\coloneqq[2^{-k-1},2^{-k})$; the
map $x\mapsto(1+x)/2$ moves its support into the interior of the
unit interval and shifts every generation by one, which changes nothing
below. Then $\nu$ has no atoms, $\dim_{\infty}(\nu)=0$ by \eqref{eq:positive-infinity-dimension},
and
\[
\int-\log x\d\nu(x)\geq\sum_{k\geq1}k\,\nu(R_{k})=\sum_{k\geq1}\frac{1}{k+1}=\infty,
\]
so $B_{\nu}(N)=\infty$ and $\mathfrak{e}_{N,0}(\nu)=0$ for every
$N$; formally, \eqref{eq:quant-dims} gives $\underline{D}_{0}(\nu)=\overline{D}_{0}(\nu)=0$
with $1/\infty\coloneqq0$. On the other hand, for $x\in R_{k}$ we
have $\nu(Q_{j}(x))=1/j$ for $j\leq k$ and $\nu(Q_{j}(x))\leq2^{k+1-j}$
for $j>k$, hence $\tau_{t}(x)\leq2+\log(1/t)+\min\{k,1/t\}$. Since
$\nu(R_{k})=1/(k(k+1))$,
\[
\mathcal{L}_{\nu}(t)\leq2+\log(1/t)+\sum_{k\leq1/t}\frac{1}{k+1}+\frac{1}{t}\,\nu\bigl([0,2^{-\lfloor1/t\rfloor-1})\bigr)\leq(1+\ln2)\log(1/t)+4,
\]
so that $\liminf_{t\downarrow0}\log(1/t)/\mathcal{L}_{\nu}(t)\geq1/(1+\ln2)>0$
(in fact the ratio converges to $1/(1+\ln2)$), and \eqref{eq:threshold-lower}
fails. What breaks down is the upper bound \eqref{eq:arbitrary-codebook-bound},
whose proof uses \eqref{eq:uniform-dyadic-mass} in the tail estimate
\eqref{eq:ball-uniform-mass}.\end{remark}

\subsection{Cardinality and depth estimates}

\begin{lemma}[Size and depth of the stopping partition]\label{lem:stopping-size}Assume
\eqref{eq:uniform-dyadic-mass} and put
\begin{equation}
K(t)\coloneqq1+\frac{\log(1/t)+\log C}{a}\qquad(0<t<1).\label{eq:max-stopping-depth}
\end{equation}
\begin{enumerate}\item For every $0<t<1$, every cube of $P_{\nu}(t)$
has generation at most $K(t)$, and $\tau_{t}\leq K(t)$ on $\mathcal{Q}$.
In particular, $P_{\nu}(t)$ is a finite partition modulo a $\nu$-null
set. Moreover, with constants $C_{1},C_{2}$ depending only on $a,C,d$,
\begin{equation}
t^{-1}<M_{\nu}(t)\leq C_{1}t^{-1}\bigl(1+\log(1/t)\bigr),\qquad\frac{\log(1/t)}{d}\leq\mathcal{L}_{\nu}(t)\leq\frac{\log(1/t)}{a}+C_{2}.\label{eq:stopping-cardinality}
\end{equation}
\item With $\sigma_{\pm}$ as in \eqref{eq:reflected-subdifferential},
one has $a\leq\sigma_{-}$ and
\begin{equation}
\frac{1}{\sigma_{+}}\leq\liminf_{t\downarrow0}\frac{\mathcal{L}_{\nu}(t)}{\log(1/t)}\leq\limsup_{t\downarrow0}\frac{\mathcal{L}_{\nu}(t)}{\log(1/t)}\leq\frac{1}{\sigma_{-}}.\label{eq:depth-sigma-bounds}
\end{equation}
\end{enumerate}

\end{lemma}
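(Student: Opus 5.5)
For the first part, the depth bound comes straight from \eqref{eq:uniform-dyadic-mass}. If $Q\in P_{\nu}(t)$ has generation $k$, then $t\leq\nu(Q^{-})\leq C2^{-a(k-1)}$, which gives $k\leq K(t)$. Likewise, for every $x$ outside the null set, $\nu(Q_{k}(x))\leq C2^{-ak}<t$ as soon as $k>K(t)-1$, so $\tau_{t}\leq K(t)$. The cubes of $P_{\nu}(t)$ are then the disjoint cells $Q_{\tau_{t}(x)}(x)$, which cover $\mathcal{Q}$ up to the discarded null set; this is the finite partition.

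For the cardinality, the lower bound holds because every cube of $P_{\nu}(t)$ has mass $<t$ and the masses sum to $1$, so $M_{\nu}(t)>1/t$. For the upper bound I count parents. Each $Q\in P_{\nu}(t)$ has a parent of mass $\geq t$. Within one generation such parents are disjoint, so there are at most $1/t$ of them, and each has $2^{d}$ children. Since only generations $\leq K(t)$ occur, $M_{\nu}(t)\leq2^{d}K(t)/t\leq C_{1}t^{-1}(1+\log(1/t))$.

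For the depth, the upper bound is $\mathcal{L}_{\nu}(t)=\int\tau_{t}\d\nu\leq K(t)$ by \eqref{eq:depth-as-integral}. The lower bound I would get from a Kraft/Gibbs argument. The cubes of $P_{\nu}(t)$ are disjoint, so $\sum_{Q\in P_{\nu}(t)}\ell(Q)^{d}\leq1$. Gibbs' inequality then gives
\[
\log(1/t)<-\sum_{Q\in P_{\nu}(t)}\nu(Q)\log\nu(Q)\leq-\sum_{Q\in P_{\nu}(t)}\nu(Q)\log\ell(Q)^{d}=d\,\mathcal{L}_{\nu}(t),
\]
where the first inequality uses $\nu(Q)<t$.

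For the second part, I first show $a\leq\sigma_{-}$. For $q>1$ we have $\sum_{\D_{k}}\nu(Q)^{q}\leq\max_{\D_{k}}\nu(Q)^{q-1}\leq C^{q-1}2^{-ak(q-1)}$, hence $\overline{\beta}(q)\leq-a(q-1)$. Since $\overline{\beta}(1)=0$ and $\overline{\beta}$ is convex, this gives $\overline{\beta}'_{+}(1)\leq-a$.

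For the depth bounds \eqref{eq:depth-sigma-bounds} I would use Chernoff-type tail estimates on $\tau_{t}$, writing $\ell=\log(1/t)$. For the upper bound, fix $q>1$ and $\varepsilon>0$. Markov's inequality applied to $\nu(Q_{k}(x))^{q-1}$ gives
\[
\nu\{\tau_{t}>k\}\leq\nu\{\nu(Q_{k}(x))\geq t\}\leq t^{-(q-1)}\sum_{\D_{k}}\nu(Q)^{q}=2^{(q-1)\ell+k\beta_{k}(q)}.
\]
For $k\geq k_{0}(\varepsilon)$ we have $\beta_{k}(q)\leq\overline{\beta}(q)+\varepsilon<0$. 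Hence the tail is geometrically small once $k\geq(1+\delta)(q-1)\ell/(-\overline{\beta}(q)-\varepsilon)$. Summing $\mathcal{L}_{\nu}(t)=\sum_{k\geq0}\nu\{\tau_{t}>k\}$ yields
\[
\limsup_{t\downarrow0}\frac{\mathcal{L}_{\nu}(t)}{\ell}\leq\frac{(1+\delta)(q-1)}{-\overline{\beta}(q)-\varepsilon}.
\]
Letting $\varepsilon,\delta\to0$ and then $q\downarrow1$, the ratio $(q-1)/(-\overline{\beta}(q))$ tends to $1/\sigma_{-}$.

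For the lower bound, fix $q<1$. Here
\[
\nu\{\tau_{t}\leq k\}\leq\sum_{j\leq k}\nu\{\nu(Q_{j}(x))<t\}\leq\sum_{j\leq k}t^{1-q}2^{j\beta_{j}(q)}.
\]
For $j\geq k_{0}$ we use $\beta_{j}(q)\leq\overline{\beta}(q)+\varepsilon$. For the finitely many $j<k_{0}$ we use the trivial bound $\beta_{j}(q)\leq d(1-q)$, so those terms are $O(t^{1-q})$. Thus $\nu\{\tau_{t}\leq k\}\to0$ uniformly for $k\leq(1-\delta)(1-q)\ell/(\overline{\beta}(q)+\varepsilon)$. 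It follows that
\[
\liminf_{t\downarrow0}\frac{\mathcal{L}_{\nu}(t)}{\ell}\geq\frac{(1-\delta)(1-q)}{\overline{\beta}(q)+\varepsilon},
\]
and as $q\uparrow1$ the ratio $\overline{\beta}(q)/(1-q)$ tends to $-\overline{\beta}'_{-}(1)=\sigma_{+}$.

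I expect the main obstacle to be this uniformity bookkeeping. The inequality $\beta_{k}\leq\overline{\beta}+\varepsilon$ holds only for $k\geq k_{0}$, so the early generations must be absorbed by the trivial bounds. The geometric tail sums must also stay $o(\ell)$, which requires $\overline{\beta}(q)+\varepsilon$ to keep its sign, that is, $\varepsilon<|\overline{\beta}(q)|$. Here finiteness and convexity of $\overline{\beta}$, with $\overline{\beta}(q)>0$ for $q<1$ and $\overline{\beta}(q)<0$ for $q>1$, come from $\overline{\beta}(1)=0$ and the bound $\overline{\beta}(q)\leq-a(q-1)$ established above.
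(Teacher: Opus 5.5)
Your proposal is correct and follows essentially the same route as the paper. In part (1) you use the same depth and parent-counting bounds, and your Gibbs inequality plays the role of the paper's Jensen step for $2^{-d\mathcal{L}_{\nu}(t)}$. In part (2) you use the same Markov bounds on $\nu(Q_{k}(x))^{\pm(q-1)}$, summed through $\mathcal{L}_{\nu}(t)=\sum_{k\geq0}\nu\{\tau_{t}>k\}$, followed by the convexity limit $q\to1$.

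The only cosmetic difference is that the union bound over $j\leq k$ in your lower bound is unnecessary. Monotonicity of $k\mapsto\nu(Q_{k}(x))$ gives $\{\tau_{t}\leq k\}=\{\nu(Q_{k}(x))<t\}$ directly, and this also removes the bookkeeping for the early generations.
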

\begin{proof}
(1) If $Q\in P_{\nu}(t)$ has generation $k$, then $t\leq\nu(Q^{-})\leq C2^{-a(k-1)}$,
so $k\leq K(t)$. Likewise $\nu(Q_{k}(x))\leq C2^{-ak}<t$ for every
$x$ and every $k>K(t)-1$, so $\tau_{t}\leq K(t)$. Thus every positive-mass
dyadic path crosses the threshold by generation $K(t)$; the first-crossing
cubes are disjoint and cover $\nu$-almost every point, so $P_{\nu}(t)$
is a finite partition modulo a null set.

Since each stopping cube has mass less than $t$, their total mass
one implies $M_{\nu}(t)>t^{-1}$. At each generation there are at
most $t^{-1}$ cubes of mass at least $t$. Each has at most $2^{d}$
children, and every stopping cube is one of these children. Summing
over the at most $K(t)$ possible parent generations gives $M_{\nu}(t)\leq2^{d}t^{-1}K(t)$,
which is the upper bound in \eqref{eq:stopping-cardinality} with
$C_{1}\coloneqq2^{d}\max\{1+(\log C)/a,\,1/a\}$. Integrating $\tau_{t}\leq K(t)$
gives the upper depth bound $\mathcal{L}_{\nu}(t)\leq K(t)=\log(1/t)/a+C_{2}$
with $C_{2}\coloneqq1+(\log C)/a$. For the lower depth bound, the
stopping masses sum to one, while disjointness gives $\sum_{Q\in P_{\nu}(t)}\ell(Q)^{d}\leq1$.
Jensen's inequality for the exponential function therefore yields
\[
2^{-d\mathcal{L}_{\nu}(t)}=2^{\sum_{Q\in P_{\nu}(t)}\nu(Q)\log\ell(Q)^{d}}\leq\sum_{Q\in P_{\nu}(t)}\nu(Q)\ell(Q)^{d}\leq t\sum_{Q\in P_{\nu}(t)}\ell(Q)^{d}\leq t.
\]
Taking logarithms gives $\mathcal{L}_{\nu}(t)\geq\log(1/t)/d$.

(2) We first show that, for $0<q<1<q'$, $\overline{\beta}(q')\leq-a(q'-1)<0<\overline{\beta}(q)$
and
\begin{equation}
\frac{1-q}{\overline{\beta}(q)}\leq\liminf_{t\downarrow0}\frac{\mathcal{L}_{\nu}(t)}{\log(1/t)}\leq\limsup_{t\downarrow0}\frac{\mathcal{L}_{\nu}(t)}{\log(1/t)}\leq\frac{1-q'}{\overline{\beta}(q')}.\label{eq:depth-spectral-bounds}
\end{equation}
Since $\tau_{t}$ takes values in $\{1,2,\dots\}$, \eqref{eq:depth-as-integral}
gives $\mathcal{L}_{\nu}(t)=\sum_{k\geq0}\nu\{\tau_{t}>k\}$, and
since $k\mapsto\nu(Q_{k}(x))$ is non-increasing,
\begin{equation}
\{\tau_{t}>k\}=\{x:\nu(Q_{k}(x))\geq t\}=\{x:X_{k}(x)\leq\ell_{t}/k\}\qquad(k\geq1).\label{eq:stopping-as-level-set}
\end{equation}
For $0<\theta<1$ and $c\in\R$, Markov's inequality applied to $\nu(Q_{k}(\cdot))^{\pm\theta}$
together with \eqref{eq:finite-beta} gives
\begin{equation}
\nu\{X_{k}\leq c\}\leq2^{k(\beta_{k}(1+\theta)+\theta c)},\qquad\nu\{X_{k}\geq c\}\leq2^{k(\beta_{k}(1-\theta)-\theta c)},\label{eq:markov-window}
\end{equation}
so that, taking $c=\ell_{t}/k$ in \eqref{eq:stopping-as-level-set},
\[
\nu\{\tau_{t}>k\}\leq2^{k\beta_{k}(1+\theta)+\theta\ell_{t}},\qquad\nu\{\tau_{t}\leq k\}\leq2^{k\beta_{k}(1-\theta)-\theta\ell_{t}}.
\]
The bound $\sum_{Q\in\D_{k}}\nu(Q)^{1+\theta}\leq(\max_{Q\in\D_{k}}\nu(Q))^{\theta}\leq(C2^{-ak})^{\theta}$
gives $\overline{\beta}(1+\theta)\leq-a\theta<0$ and hence $a\leq\sigma_{-}$,
while convexity of $\overline{\beta}$ with $\overline{\beta}(1)=0$
gives $\overline{\beta}(1-\theta)\geq\theta\sigma_{-}>0$; both quotients
in \eqref{eq:depth-spectral-bounds} are therefore finite and positive.

\emph{Upper bound.} Put $q'=1+\theta$ and let $0<\eta<-\overline{\beta}(q')$.
There is $k_{\eta}$ with $\beta_{k}(q')\leq\overline{\beta}(q')+\eta$
for $k\geq k_{\eta}$; put $\Lambda\coloneqq-\overline{\beta}(q')-\eta>0$.
For such $k$ we get $\nu\{\tau_{t}>k\}\leq2^{-k\Lambda+\theta\ell_{t}}$,
which for $k=\lceil\theta\ell_{t}/\Lambda\rceil+j$, $j\geq0$, is
at most $2^{-\Lambda j}$. Bounding the remaining terms by $1$,
\[
\mathcal{L}_{\nu}(t)\leq k_{\eta}+\frac{\theta\ell_{t}}{\Lambda}+1+\sum_{j\geq0}2^{-\Lambda j}=\frac{\theta}{-\overline{\beta}(q')-\eta}\,\log(1/t)+O_{\theta,\eta}(1).
\]
Dividing by $\ell_{t}$, letting $t\downarrow0$ and then $\eta\downarrow0$
gives the upper bound in \eqref{eq:depth-spectral-bounds}, since
$\theta/(-\overline{\beta}(q'))=(1-q')/\overline{\beta}(q')$.

\emph{Lower bound.} Put $q=1-\theta$ and let $\eta>0$. There is
$k_{\eta}$ with $\beta_{k}(q)\leq\overline{\beta}(q)+\eta$ for $k\geq k_{\eta}$,
so that $\nu\{\tau_{t}\leq k\}\leq2^{k(\overline{\beta}(q)+\eta)-\theta\ell_{t}}$.
For $k_{\eta}\leq k\leq k_{\ast}\coloneqq\lfloor\theta\ell_{t}/(\overline{\beta}(q)+2\eta)\rfloor$
the exponent is at most $-\eta k$, whence
\[
\mathcal{L}_{\nu}(t)\geq\sum^{k_{\ast}}_{k=k_{\eta}}\nu\{\tau_{t}>k\}\geq k_{\ast}-k_{\eta}-\sum_{j\geq0}2^{-\eta j}=\frac{\theta}{\overline{\beta}(q)+2\eta}\,\log(1/t)-O_{\theta,\eta}(1).
\]
Dividing by $\ell_{t}$ and letting $t\downarrow0$, then $\eta\downarrow0$,
gives the lower bound.

Finally, by convexity of $\overline{\beta}$ and $\overline{\beta}(1)=0$
the quotient $(1-q')/\overline{\beta}(q')$ decreases to $1/\sigma_{-}$
as $q'\downarrow1$, and $(1-q)/\overline{\beta}(q)$ increases to
$1/\sigma_{+}$ as $q\uparrow1$; this is \eqref{eq:depth-sigma-bounds}.
\end{proof}

\subsection{Codebook estimates in both directions}

\begin{lemma}[Comparison with arbitrary codebooks]\label{lem:stopping-codebook}
Assume \eqref{eq:uniform-dyadic-mass} and write $B(N)=B_{\nu}(N)$.
For every $0<t<1$ and $N\geq1$, 
\begin{align}
B(M_{\nu}(t)) & \geq\mathcal{L}_{\nu}(t),\label{eq:stopping-centres}\\
B(N) & \leq\mathcal{L}_{\nu}(t)+C_{3}Nt\bigl(1+\log(1/t)\bigr),\label{eq:arbitrary-codebook-bound}
\end{align}
where $C_{3}$ depends only on $a,C,d$. In particular $B(N)<\infty$
and $\mathfrak{e}_{N,0}(\nu)>0$ for every finite $N$.

\end{lemma}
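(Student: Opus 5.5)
The inequality \eqref{eq:stopping-centres} will be proved by an explicit codebook, and \eqref{eq:arbitrary-codebook-bound} by comparing an arbitrary codebook with the stopping depth through a layer-cake decomposition of the excess $-\log d(x,A)-\tau_{t}(x)$. The second half is the substantive one; finiteness of $B(N)$ and positivity of $\mathfrak{e}_{N,0}(\nu)$ then follow by fixing any $t$ in \eqref{eq:arbitrary-codebook-bound}.

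\emph{Lower bound \eqref{eq:stopping-centres}.} Let $A$ be the set of centres of the cubes in $P_{\nu}(t)$. By Lemma~\ref{lem:stopping-size}(1) this set is finite with $\#A=M_{\nu}(t)$. For $\nu$-a.e.\ $x$, the cube $Q\in P_{\nu}(t)$ containing $x$ has generation $\tau_{t}(x)$. Its centre lies within max-norm distance $\ell(Q)/2=2^{-\tau_{t}(x)-1}$ of $x$, so $-\log d(x,A)\geq\tau_{t}(x)+1>\tau_{t}(x)$. Integrating and using \eqref{eq:depth-as-integral} gives $B(M_{\nu}(t))\geq\mathcal{L}_{\nu}(t)$.

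\emph{Upper bound \eqref{eq:arbitrary-codebook-bound}.} Fix $A$ with $\#A\leq N$. Write pointwise
\[
-\log d(x,A)\leq\tau_{t}(x)+\bigl(-\log d(x,A)-\tau_{t}(x)\bigr)_{+}\leq\tau_{t}(x)+\sum_{a\in A}\bigl(-\log\|x-a\|-\tau_{t}(x)\bigr)_{+}.
\]
After integrating, it suffices to show that a single point $a$ satisfies
\[
\int\bigl(-\log\|x-a\|-\tau_{t}(x)\bigr)_{+}\d\nu\leq C_{3}t(1+\log(1/t)).
\]
By the layer-cake formula this integral is at most $\sum_{j\geq0}\nu(E_{j})$, where
\[
E_{j}\coloneqq\{x:\|x-a\|\leq2^{-\tau_{t}(x)-j}\}.
\]
I would bound $\nu(E_{j})$ in two ways and sum the minimum over $j$.

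\emph{First estimate: the tail bound.} Let $k_{0}$ be the smallest value of $\tau_{t}$ on $E_{j}$. Then $E_{j}\subseteq B(a,2^{-k_{0}-j})$. This ball meets at most $3^{d}$ dyadic cubes of generation $k_{0}+j$, so \eqref{eq:uniform-dyadic-mass} gives
\[
\nu(E_{j})\leq3^{d}C2^{-a(k_{0}+j)}\leq3^{d}C2^{-aj}.
\]
This is the tail estimate \eqref{eq:ball-uniform-mass} referred to in Remark~\ref{rem:ahlfors-necessary}, and it makes the sum over $j\geq J\coloneqq\lceil\log(C/t)/a\rceil$ of order $t$.

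\emph{Second estimate: the threshold bound, for $j<J$.} Let $R$ be one of the at most $3^{d}$ cubes of generation $k_{0}+j$ meeting the ball. Suppose $R$ contains a point $x\in E_{j}$ with $\tau_{t}(x)\leq k_{0}+j$. Then $R\subseteq Q_{\tau_{t}(x)}(x)$, whose mass is $<t$, so $\nu(R)<t$. The remaining points of $E_{j}$ have $\tau_{t}>k_{0}+j$ and lie in the smaller ball $B(a,2^{-k_{0}-2j-1})$. I would therefore iterate with $k_{0}$ replaced by a value at least $k_{0}+j+1$. Since $\tau_{t}\leq K(t)$ by Lemma~\ref{lem:stopping-size}(1), there are at most $K(t)/(j+1)+1$ iterations, and hence
\[
\nu(E_{j})\leq3^{d}t\Bigl(\frac{K(t)}{j+1}+1\Bigr).
\]

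\emph{Main obstacle.} Summing $\min\{3^{d}tK(t)/(j+1),\,3^{d}C2^{-aj}\}$ over $j$ yields $t\log(1/t)\log\log(1/t)$, one $\log\log$ factor worse than claimed. Removing it is the step I expect to be hardest. My first attempt would be to charge the $t$-mass cubes of different iterations and different $j$ to distinct stopping cubes. The observation is that the cubes arising for a fixed $a$ form a single nested chain of generation-$k$ neighbourhoods of $a$ (at most $3^{d}$ cubes per generation, at most $K(t)$ generations). Each generation of this chain contributes at most $3^{d}t$ in total over all $j$, because a point $x$ with $\tau_{t}(x)=k$ is counted for $j\leq-\log\|x-a\|-k$ only. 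Writing the excess as $\sum_{k}\int_{\{\tau_{t}=k\}}(-\log\|x-a\|-k)_{+}\d\nu$, each term should then be bounded by $3^{d}t$ times a geometric factor coming from \eqref{eq:uniform-dyadic-mass}. Summing over $k\leq K(t)$ would give $C_{3}t(1+\log(1/t))$. Combining this per-point bound with the sum over $a\in A$ yields the factor $N$ in \eqref{eq:arbitrary-codebook-bound}.
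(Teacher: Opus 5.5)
Your proof of \eqref{eq:stopping-centres} is correct, and so is the reduction of \eqref{eq:arbitrary-codebook-bound} to a single codebook point, which I call $z$ to avoid a clash with the exponent $a$. Working with the positive part even makes the paper's projection of $A$ into $\mathcal{Q}$ unnecessary. The gap is the one you flag yourself. Layering by the excess $j$ relative to the local depth $\tau_{t}(x)$ and iterating yields only $t\log(1/t)\log\log(1/t)$, which is not the stated rate. (That weaker bound would still suffice for Theorem~\ref{thm:adaptive-entropy}, since at $t=N^{-(1+\varepsilon)}$ any polylogarithmic factor is absorbed by $N^{-\varepsilon}$.)

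The repair you sketch does not close the gap, because a bound of order $t$ for each level set $\{\tau_{t}=k\}$ is false. Take $d=1$ and $z=1/4$. Put mass $t/2$ uniformly on $[z-t/4,z+t/4]$ and the remaining mass uniformly on $[5/8,7/8]$. Then:
\begin{itemize}
\item $\nu(Q')\leq4\ell(Q')$ for every dyadic $Q'$, uniformly in $t$;
\item $\tau_{t}=1$ on $[0,1/2)$;
\item $\int_{\{\tau_{t}=1\}}(-\log|x-z|-1)_{+}\d\nu\geq\frac{t}{2}\log(1/t)$.
\end{itemize}
So one shallow stopping cube can use up the entire budget, and bounding the levels separately before summing over $k\leq K(t)$ loses a further logarithm.

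The missing idea is to index by the absolute generation $m$ at which a point is counted, not by $\tau_{t}(x)$ and not by the excess. This is presumably what you mean by ``each generation of the chain contributes at most $3^{d}t$'', but it is not the decomposition you then write down. Since $\tau_{t}$ is integer-valued,
\[
(-\log\|x-z\|-\tau_{t}(x))_{+}\leq\#\{m\geq\tau_{t}(x):\|x-z\|<2^{-m}\},
\]
and hence
\[
\int\bigl(-\log\|x-z\|-\tau_{t}\bigr)_{+}\d\nu\leq\sum_{m\geq0}\nu\bigl(B(z,2^{-m})\cap\{\tau_{t}\leq m\}\bigr)\leq3^{d}\sum_{m\geq0}\min\{t,C2^{-am}\}\leq3^{d}C_{4}\,t\bigl(1+\log(1/t)\bigr).
\]
The middle inequality holds because, for fixed $m$, the set is covered by at most $3^{d}$ cubes of $\D_{m}$. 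Each of these lies inside a stopping cube, so has mass $<t$, and each has mass at most $C2^{-am}$. Thus all stopping levels $k\leq m$ are controlled jointly by the same $3^{d}$ cubes. This is the interchange of the sums over $k$ and $m$ that your per-level decomposition misses. It is exactly the paper's argument, which handles the whole codebook at once through $\int-\log d(x,A)\d\nu\leq\sum_{k}\nu(U_{k}(A))$ and $\nu(U_{k}(A))\leq\nu\{\tau_{t}>k\}+3^{d}N\min\{t,C2^{-ak}\}$.
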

\begin{proof}
For \eqref{eq:stopping-centres}, let $A_{t}$ consist of the centres
of the cubes in $P_{\nu}(t)$, so that $\#A_{t}=M_{\nu}(t)$. For
$x\in Q\in P_{\nu}(t)$ the maximum-norm distance from $x$ to the
centre of $Q$ is at most $\ell(Q)/2$, hence $d(x,A_{t})\leq\ell(Q)$
and $-\log d(x,A_{t})\geq-\log\ell(Q)=k(Q)$. Integrating over $Q$
and summing over $P_{\nu}(t)$,
\[
B(M_{\nu}(t))\geq\int-\log d(x,A_{t})\d\nu(x)\geq\sum_{Q\in P_{\nu}(t)}\nu(Q)\,k(Q)=\mathcal{L}_{\nu}(t).
\]
The remaining inequality \eqref{eq:arbitrary-codebook-bound} is proved
in four steps.

\emph{Step 1 (normalisation).} Fix a non-empty codebook $A$ with
$\#A\leq N$ and put $f_{A}(x)\coloneqq-\log d(x,A)$. Coordinatewise
projection onto $\mathcal{Q}$ does not increase the maximum-norm
distance from any point of $\mathcal{Q}$, and does not increase the
cardinality, so we may assume $A\subset\mathcal{Q}$. Since $\mathcal{Q}$
has maximum-norm diameter $1$, this gives $d(x,A)\leq1$ and hence
\begin{equation}
f_{A}\geq0\qquad\text{on }\mathcal{Q}.\label{eq:fA-nonnegative}
\end{equation}
This is the only point at which the normalisation is used, and it
is what makes the tail sum below an upper bound.

\emph{Step 2 (from the integral to the sets $U_{k}(A)$).} For $k\geq0$
put
\[
U_{k}(A)\coloneqq\{x\in\mathcal{Q}:d(x,A)<2^{-k}\}=\{x\in\mathcal{Q}:f_{A}(x)>k\},
\]
so that $U_{0}(A)\supseteq U_{1}(A)\supseteq\cdots$. A point $x$
belongs to $U_{k}(A)$ for exactly those $k\geq0$ with $k<f_{A}(x)$,
that is, for exactly $\lceil f_{A}(x)\rceil$ values of $k$. By \eqref{eq:fA-nonnegative}
we therefore have the pointwise identity $\sum_{k\geq0}\mathbf{1}_{U_{k}(A)}=\lceil f_{A}\rceil\geq f_{A}$,
and monotone convergence gives
\begin{equation}
\int-\log d(x,A)\d\nu(x)\leq\sum^{\infty}_{k=0}\nu(U_{k}(A)).\label{eq:tail-sum-bound}
\end{equation}

\emph{Step 3 (two bounds for $\nu(U_{k}(A))$).} A maximum-norm ball
of radius $2^{-k}$ is a cube of side $2^{1-k}$ and therefore meets
at most $c_{d}\coloneqq3^{d}$ cubes of $\D_{k}$. Since $U_{k}(A)$
is covered by the $\#A\leq N$ balls of radius $2^{-k}$ centred at
the points of $A$, it meets at most $c_{d}N$ cubes of $\D_{k}$;
call this family $\mathcal{F}_{k}$.

If a cube $Q\in\D_{k}$ contains a point $x$ with $\tau_{t}(x)\leq k$,
then $Q=Q_{k}(x)\subseteq Q_{\tau_{t}(x)}(x)$ and the latter has
mass less than $t$ by \eqref{eq:stopping-time}; hence $\nu(Q)<t$.
Splitting $U_{k}(A)$ according to whether stopping has occurred by
generation $k$ and bounding the cubes of $\mathcal{F}_{k}$ that
meet $\{\tau_{t}\leq k\}$ one by one,
\begin{equation}
\nu(U_{k}(A))\leq\nu\{\tau_{t}>k\}+c_{d}Nt.\label{eq:ball-stopping-split}
\end{equation}
Independently, \eqref{eq:uniform-dyadic-mass} bounds the mass of
each cube of $\mathcal{F}_{k}$ by $C2^{-ak}$, so
\begin{equation}
\nu(U_{k}(A))\leq c_{d}CN2^{-ak}.\label{eq:ball-uniform-mass}
\end{equation}
These combine to
\begin{equation}
\nu(U_{k}(A))\leq\nu\{\tau_{t}>k\}+c_{d}N\min\{t,C2^{-ak}\}:\label{eq:ball-combined}
\end{equation}
if $t\leq C2^{-ak}$ the minimum is $t$ and \eqref{eq:ball-stopping-split}
is the assertion; if $C2^{-ak}<t$ the minimum is $C2^{-ak}$ and
\eqref{eq:ball-uniform-mass} already gives the assertion without
the first term.

\emph{Step 4 (summation).} Inserting \eqref{eq:ball-combined} into
\eqref{eq:tail-sum-bound},
\[
\int-\log d(x,A)\d\nu(x)\leq\sum^{\infty}_{k=0}\nu\{\tau_{t}>k\}+c_{d}N\sum^{\infty}_{k=0}\min\{t,C2^{-ak}\}.
\]
Since $\tau_{t}$ takes values in $\{1,2,\dots\}$, the first sum
is $\int\tau_{t}\d\nu=\mathcal{L}_{\nu}(t)$ by \eqref{eq:depth-as-integral}.
For the second, let $k_{0}$ be the least $k\geq0$ with $C2^{-ak}\leq t$,
so that $k_{0}\leq1+\log(C/t)/a$ and $C2^{-ak_{0}}\leq t$. Splitting
the sum at $k_{0}$,
\[
\sum^{\infty}_{k=0}\min\{t,C2^{-ak}\}\leq k_{0}t+\sum_{k\geq k_{0}}C2^{-ak}\leq k_{0}t+\frac{t}{1-2^{-a}}\leq C_{4}\,t\bigl(1+\log(1/t)\bigr),
\]
with $C_{4}$ depending only on $a$ and $C$. Taking the supremum
over all non-empty $A$ with $\#A\leq N$ proves \eqref{eq:arbitrary-codebook-bound}
with $C_{3}=c_{d}C_{4}$. Finally $\mathcal{L}_{\nu}(t)<\infty$ by
\eqref{eq:stopping-cardinality}, so $B(N)<\infty$ and $\mathfrak{e}_{N,0}(\nu)>0$
for every finite $N$.
\end{proof}

\subsection{Proof of the mass-threshold formula}

The following lemma is standard; it allows us to pass between the
continuous parameter $t$ and the discrete parameter $N$ without
any continuity assumption. Call a sequence $(a_{n})_{n\geq1}$ of
positive numbers \emph{admissible} if it is non-decreasing, $a_{n}\to\infty$,
and $\sup_{n}a_{n+1}/a_{n}<\infty$. Applied to $g=\log f$ for a
non-decreasing $f\geq1$, the lemma below is the usual statement that
growth exponents may be computed along admissible sequences; its proof
is straightforward.

\begin{lemma}[Admissible sequences]\label{lem:admissible-sequence}Let
$(a_{n})$ be admissible and let $g:[a_{1},\infty)\to[0,\infty)$
be non-decreasing. Then
\begin{equation}
\liminf_{n\to\infty}\frac{g(a_{n})}{\log a_{n}}=\liminf_{x\to\infty}\frac{g(x)}{\log x},\qquad\limsup_{n\to\infty}\frac{g(a_{n})}{\log a_{n}}=\limsup_{x\to\infty}\frac{g(x)}{\log x}.\label{eq:admissible-sampling}
\end{equation}
\end{lemma}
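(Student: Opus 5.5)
The plan is a sandwich argument: for $x\in[a_n,a_{n+1}]$, monotonicity of $g$ gives
\[
g(a_n)\leq g(x)\leq g(a_{n+1}).
\]
The task is then to show that $\log x$ differs from $\log a_n$ and from $\log a_{n+1}$ by a bounded additive amount, which becomes negligible against $\log x\to\infty$.

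\emph{Setup.} Put $\Lambda\coloneqq\sup_n a_{n+1}/a_n<\infty$. Since $a_n\to\infty$ is non-decreasing, every $x\geq a_1$ lies in some interval $[a_n,a_{n+1}]$, with $n=n(x)\to\infty$ as $x\to\infty$. For such $x$ we have $\log a_n\leq\log x\leq\log a_{n+1}\leq\log a_n+\log\Lambda$, so $\log x/\log a_n\to1$ and $\log x/\log a_{n+1}\to1$ along $x\to\infty$. Only $x$ large enough that $\log a_n>0$ is considered.

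\emph{The $\liminf$.} The inequality ``$\geq$'' holds trivially, since the sequence $(a_n)$ is a subset of the continuum parameter. For ``$\leq$'', use
\[
\frac{g(x)}{\log x}\geq\frac{g(a_n)}{\log a_n}\cdot\frac{\log a_n}{\log x},
\]
which needs $g\geq0$ so that decreasing the denominator is harmless. The second factor tends to $1$, so $\liminf_x g(x)/\log x\geq\liminf_n g(a_n)/\log a_n$. This is the direction where a non-negative $g$ matters, as does the convention for the product when the $\liminf$ is $+\infty$; the case $+\infty$ is handled separately and trivially.

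\emph{The $\limsup$.} This is symmetric. Use
\[
\frac{g(x)}{\log x}\leq\frac{g(a_{n+1})}{\log a_{n+1}}\cdot\frac{\log a_{n+1}}{\log x},
\]
where the last factor again tends to $1$. The reverse inequality is again trivial by restriction to the sequence.

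\emph{Main obstacle.} There is no real obstacle. The only points needing care are the indices $n$ at which $a_n=a_{n+1}$, which produce degenerate intervals and are harmless, and the case of infinite limits. Both are handled by the stated conventions.
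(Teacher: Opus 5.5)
Your sandwich argument is correct and is the standard argument the paper has in mind when it leaves this proof to the reader as straightforward. Monotonicity traps $g(x)$ between $g(a_{n})$ and $g(a_{n+1})$ for $a_{n}\leq x<a_{n+1}$, the bounded ratio makes $\log a_{n}$, $\log x$ and $\log a_{n+1}$ asymptotically equivalent, and the reverse inequalities follow by restricting to the sequence.

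A minor aside: the step $g(x)/\log x\geq g(a_{n})/\log x$ uses only monotonicity and $\log x>0$, not $g\geq0$. The ratio hypothesis enters only through $\log a_{n+1}/\log a_{n}\to1$.
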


With this lemma at hand we can prove the theorem.
\begin{proof}[Proof of Theorem~\ref{thm:adaptive-entropy}]
Fix $\varepsilon>0$. By \eqref{eq:stopping-cardinality}, for all
sufficiently large $N$,
\[
M_{\nu}\bigl(N^{-1/(1+\varepsilon)}\bigr)\leq N.
\]
Monotonicity of $B$ therefore gives
\begin{equation}
B(N)\geq\mathcal{L}_{\nu}\bigl(N^{-1/(1+\varepsilon)}\bigr).\label{eq:stopping-B-lower}
\end{equation}
In \eqref{eq:arbitrary-codebook-bound}, instead take $t=N^{-(1+\varepsilon)}$.
Since $Nt(1+\log(1/t))=O(N^{-\varepsilon}\log N)=o(1)$,
\begin{equation}
B(N)\leq\mathcal{L}_{\nu}\bigl(N^{-(1+\varepsilon)}\bigr)+o(1).\label{eq:stopping-B-upper}
\end{equation}
The function $x\mapsto\mathcal{L}_{\nu}(1/x)$ is non-decreasing,
because lowering the threshold cannot shorten a stopping path, and
for every $\delta>0$ the sequence $a_{N}=N^{\delta}$ is admissible.
Lemma~\ref{lem:admissible-sequence} therefore gives
\begin{equation}
\delta\,\liminf_{t\downarrow0}\frac{\mathcal{L}_{\nu}(t)}{\log(1/t)}=\liminf_{N\to\infty}\frac{\mathcal{L}_{\nu}(N^{-\delta})}{\log N},\qquad\delta\,\limsup_{t\downarrow0}\frac{\mathcal{L}_{\nu}(t)}{\log(1/t)}=\limsup_{N\to\infty}\frac{\mathcal{L}_{\nu}(N^{-\delta})}{\log N}.\label{eq:sampling-delta}
\end{equation}
Applying \eqref{eq:sampling-delta} with $\delta=1/(1+\varepsilon)$
to \eqref{eq:stopping-B-lower} and with $\delta=1+\varepsilon$ to
\eqref{eq:stopping-B-upper}, and letting $\varepsilon\downarrow0$,
we obtain
\begin{equation}
\liminf_{t\downarrow0}\frac{\mathcal{L}_{\nu}(t)}{\log(1/t)}=\liminf_{N\to\infty}\frac{B(N)}{\log N},\qquad\limsup_{t\downarrow0}\frac{\mathcal{L}_{\nu}(t)}{\log(1/t)}=\limsup_{N\to\infty}\frac{B(N)}{\log N}.\label{eq:depth-B-limits}
\end{equation}
Moreover, by \eqref{eq:depth-sigma-bounds},
\[
0<\frac{1}{\sigma_{+}}\leq\liminf_{t\downarrow0}\frac{\mathcal{L}_{\nu}(t)}{\log(1/t)}\leq\limsup_{t\downarrow0}\frac{\mathcal{L}_{\nu}(t)}{\log(1/t)}\leq\frac{1}{\sigma_{-}}<\infty.
\]
All quantities are positive and finite. Reciprocation therefore interchanges
lower and upper limits in \eqref{eq:depth-B-limits} and, by \eqref{eq:quant-dims},
yields
\[
\underline{D}_{0}(\nu)=\liminf_{t\downarrow0}\frac{\log(1/t)}{\mathcal{L}_{\nu}(t)},\qquad\overline{D}_{0}(\nu)=\limsup_{t\downarrow0}\frac{\log(1/t)}{\mathcal{L}_{\nu}(t)},
\]
which is \eqref{eq:threshold-lower}, together with $\sigma_{-}\leq\underline{D}_{0}(\nu)\leq\overline{D}_{0}(\nu)\leq\sigma_{+}$.
The sampling statement is \eqref{eq:sampling-delta} with $\delta=c$,
and $c=1$ together with \eqref{eq:depth-as-integral} gives \eqref{eq:threshold-time-lower}.
\end{proof}

\subsection{The dual problem at order zero and a Tauberian form}

We now transfer the counting framework of \cite{KN25} to the mean
depth; Remark~\ref{rem:adaptive-algorithm} records the dictionary.
For $\mathfrak{J}=\nu$ and $x=1/t$, the cubes with $\nu(Q)\geq t$
are the $x$-bad cubes $R_{x}$ of \cite[proof of Proposition~3.1]{KN25},
and $P_{\nu}(t)$ is the minimal $x$-good partition $G_{x}$ of \cite[(1.1)]{KN25}
up to null cubes. Let $\Pi$ be the set of finite partitions of the
root cube into cubes from $\D$; for $P\in\Pi$ let $S(P)\subset\D$
be the set of cubes strictly containing a member of $P$, the cubes
subdivided in the construction of $P$, and with $k(Q)$ the generation
of $Q$ put
\[
\mathcal{L}(P)\coloneqq\sum_{Q\in P}\nu(Q)\,k(Q),\qquad\Gamma_{\nu}(N)\coloneqq\sup\{\mathcal{L}(P):P\in\Pi,\ \#P\leq N\},
\]
the $\nu$-mean depth of $P$ and the largest mean depth attainable
with at most $N$ cubes, and
\[
S_{t}\coloneqq\{Q\in\D:\nu(Q)\geq t\},\qquad N_{t}\coloneqq1+(2^{d}-1)\,\#S_{t}\qquad(0<t<1),
\]
so that $S_{t}=R_{1/t}$.

\begin{proposition}[Weighted count and the dual problem at order zero]\label{prop:mean-depth-dual}Assume
\eqref{eq:uniform-dyadic-mass} and let $0<t<1$.\begin{enumerate}\item
The mean depth is the $\nu$-weighted number of bad cubes,
\begin{equation}
\mathcal{L}_{\nu}(t)=\sum_{Q\in\D,\ \nu(Q)\geq t}\nu(Q)=\sum_{Q\in S_{t}}\nu(Q),\label{eq:weighted-count}
\end{equation}
and $\#S_{t}\leq t^{-1}K(t)$ with $K(t)$ from \eqref{eq:max-stopping-depth}.\item
For every $P\in\Pi$,
\begin{equation}
\#P=1+(2^{d}-1)\,\#S(P),\qquad\mathcal{L}(P)=\sum_{Q\in S(P)}\nu(Q).\label{eq:tree-identities}
\end{equation}
\item The family $\widetilde{P}_{\nu}(t)\coloneqq\{Q\in\D\setminus\D_{0}:\nu(Q)<t\leq\nu(Q^{-})\}$,
the partition $G_{1/t}$ of \cite[(1.1)]{KN25} with null cubes retained,
consists of $P_{\nu}(t)$ and the null children of cubes in $S_{t}$;
it belongs to $\Pi$ and satisfies $S(\widetilde{P}_{\nu}(t))=S_{t}$,
$\#\widetilde{P}_{\nu}(t)=N_{t}\geq M_{\nu}(t)$ and $\mathcal{L}(\widetilde{P}_{\nu}(t))=\mathcal{L}_{\nu}(t)$.
Every $P\in\Pi$ with $\#P\leq N_{t}$ satisfies $\mathcal{L}(P)\leq\mathcal{L}_{\nu}(t)$.
Consequently $\Gamma_{\nu}(N_{t})=\mathcal{L}_{\nu}(t)$: the threshold
partitions solve the mean-depth dual problem.\item For all $N\geq1$
and $0<t<1$,
\begin{equation}
\Gamma_{\nu}(N)\leq B_{\nu}(N)\leq\Gamma_{\nu}(N_{t})+C_{3}Nt\bigl(1+\log(1/t)\bigr),\label{eq:dual-sandwich}
\end{equation}
and
\begin{equation}
\underline{D}_{0}(\nu)=\liminf_{N\to\infty}\frac{\log N}{\Gamma_{\nu}(N)},\qquad\overline{D}_{0}(\nu)=\limsup_{N\to\infty}\frac{\log N}{\Gamma_{\nu}(N)}.\label{eq:dual-dimension}
\end{equation}
\end{enumerate}\end{proposition}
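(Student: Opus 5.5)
The whole proposition rests on one telescoping identity, which I would establish first: for a point $x$ and a depth $k\geq0$, $k=\sum_{j=0}^{k-1}1$ counts the strict ancestors $Q_{0}(x)\supsetneq\dots\supsetneq Q_{k-1}(x)$ of $Q_{k}(x)$. For (2), fix $P\in\Pi$ and write $\mathcal{L}(P)=\sum_{Q\in P}\nu(Q)k(Q)=\sum_{Q\in P}\sum_{R\supsetneq Q}\nu(Q)$. Exchanging the order of summation, the inner sum over $Q\in P$ with $Q\subsetneq R$ is $\nu(R)$, because $P$ partitions the root cube and hence partitions each subdivided cube $R\in S(P)$. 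This gives $\mathcal{L}(P)=\sum_{R\in S(P)}\nu(R)$. The cardinality identity follows by induction on $\#S(P)$: each subdivision replaces one cube by $2^{d}$ children, adding $2^{d}-1$ to the count. Equivalently, one counts the full $2^{d}$-ary tree, whose internal nodes are $S(P)$ and whose leaves are $P$; here $S(P)$ is ancestor-closed.

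For (1) I would apply the same exchange to $\mathcal{L}_{\nu}(t)=\int\tau_{t}\d\nu=\sum_{k\geq0}\nu\{\tau_{t}>k\}$. By \eqref{eq:stopping-as-level-set}, together with $\tau_{t}\geq1$ for the term $k=0$, the set $\{\tau_{t}>k\}$ is, up to null sets, the union of the cubes $Q\in\D_{k}$ with $\nu(Q)\geq t$. Since $t<1$, the root cube belongs to $S_{t}$ and the $k=0$ term equals $1$. Summing over $k$ gives \eqref{eq:weighted-count}. The bound $\#S_{t}\leq t^{-1}K(t)$ follows as in Lemma~\ref{lem:stopping-size}: each generation contains at most $t^{-1}$ cubes of mass at least $t$, and no such cube lies at a generation exceeding $K(t)-1$. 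Together with generation $0$, this gives at most $K(t)$ generations.

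For (3), I would first check that $S_{t}$ is ancestor-closed, since mass is monotone along paths. The leaves of the tree whose internal nodes are $S_{t}$ are exactly the children $Q$ of cubes in $S_{t}$ with $\nu(Q)<t$, which is $\widetilde P_{\nu}(t)$. Hence $\widetilde P_{\nu}(t)\in\Pi$ (finiteness comes from (1)) and $S(\widetilde P_{\nu}(t))=S_{t}$. Part (2) then gives $\#\widetilde P_{\nu}(t)=N_{t}$ and $\mathcal{L}(\widetilde P_{\nu}(t))=\sum_{S_{t}}\nu=\mathcal{L}_{\nu}(t)$; null cubes contribute nothing.

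The main step is optimality. Take $P$ with $\#P\leq N_{t}$. By (2), $\#S(P)\leq\#S_{t}$, and $\mathcal{L}(P)=\sum_{S(P)}\nu$. Every cube in $S(P)\setminus S_{t}$ has mass $<t$, while every cube in $S_{t}\setminus S(P)$ has mass $\geq t$. Moreover $\#(S(P)\setminus S_{t})\leq\#(S_{t}\setminus S(P))$. Hence $\sum_{S(P)}\nu\leq\sum_{S_{t}}\nu$, so $\Gamma_{\nu}(N_{t})=\mathcal{L}_{\nu}(t)$.

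For (4), the lower bound $\Gamma_{\nu}(N)\leq B_{\nu}(N)$ uses centres of the cubes of $P$, exactly as in the proof of \eqref{eq:stopping-centres}. The upper bound is \eqref{eq:arbitrary-codebook-bound} combined with $\mathcal{L}_{\nu}(t)=\Gamma_{\nu}(N_{t})$. For \eqref{eq:dual-dimension}, $\Gamma_{\nu}$ is non-decreasing and satisfies $\Gamma_{\nu}\leq B_{\nu}$. For the reverse comparison I would choose $t=N^{-(1+\varepsilon)}$. Then $N_{t}\leq1+2^{d}t^{-1}K(t)=N^{1+\varepsilon+o(1)}$, which gives $B_{\nu}(N)\leq\Gamma_{\nu}(N^{1+2\varepsilon})+o(1)$. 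The admissible-sequence Lemma~\ref{lem:admissible-sequence}, applied to $a_{N}=N^{1+2\varepsilon}$, and letting $\varepsilon\downarrow0$, shows that $\Gamma_{\nu}(N)/\log N$ and $B_{\nu}(N)/\log N$ have the same lower and upper limits. Reciprocating, with positivity and finiteness taken from Theorem~\ref{thm:adaptive-entropy}, gives \eqref{eq:dual-dimension}.
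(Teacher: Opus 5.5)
Your proposal is correct and follows the paper's proof essentially step by step: the same tree identities, the same cube-exchange argument for optimality of the threshold partition, and the same codebook sandwich. Your summation-exchange derivation of $\mathcal{L}(P)=\sum_{S(P)}\nu$ is an equivalent form of the paper's induction. The only real variation is in \eqref{eq:dual-dimension}: you match the growth exponents of $\Gamma_{\nu}$ and $B_{\nu}$ via $t=N^{-(1+\varepsilon)}$ and a limit $\varepsilon\downarrow0$, whereas the paper takes $t_{N}=\log^{2}N/N$, so that $N_{t_{N}}\leq N$, and compares $\Gamma_{\nu}(N)$ directly with $\mathcal{L}_{\nu}(t_{N})$. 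Both routes work.
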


Item (3) is the order-zero analogue of the dual problem of \cite[Lemma~1.3]{KN25},
and \eqref{eq:dual-dimension} is the order-zero counterpart of \cite[Theorem~1.8]{KN25}.
\begin{proof}
For (1), $k\mapsto\nu(Q_{k}(x))$ is non-increasing, so $\tau_{t}(x)=\#\{k\geq0:\nu(Q_{k}(x))\geq t\}$
by \eqref{eq:stopping-time}; integrating with \eqref{eq:depth-as-integral}
gives \eqref{eq:weighted-count}, and as in the proof of Lemma~\ref{lem:stopping-size}
only generations $k<K(t)$ contribute, with at most $t^{-1}$ cubes
each. For (2), $P\in\Pi$ is the leaf set of a finite tree of dyadic
cubes with internal nodes $S(P)$; replacing a cube $Q$ of generation
$k$ by its $2^{d}$ children adds one node to $S(P)$, $2^{d}-1$
leaves to $P$ and $\sum_{Q'}\nu(Q')(k+1)-\nu(Q)k=\nu(Q)$ to $\mathcal{L}(P)$,
so induction on $\#S(P)$ proves \eqref{eq:tree-identities}; for
the threshold partition the first identity is the exact form of $\#(G_{x}\cap\D_{k})\leq2^{d}\,\#(R_{x}\cap\D_{k-1})$
in the proof of \cite[Proposition~3.1]{KN25}.

For (3), $S_{t}$ is finite by (1) and closed under passing to the
parent, so it is the set of internal nodes of a tree as in (2) whose
leaves are the cubes with mass below $t$ and parent of mass at least
$t$, that is, $\widetilde{P}_{\nu}(t)$; this is the description
of $G_{1/t}$ in \cite[Lemma~1.2]{KN25}. Hence $\widetilde{P}_{\nu}(t)\in\Pi$,
$S(\widetilde{P}_{\nu}(t))=S_{t}$, $\#\widetilde{P}_{\nu}(t)=N_{t}$
and $\mathcal{L}(\widetilde{P}_{\nu}(t))=\mathcal{L}_{\nu}(t)$; the
null members contribute nothing and the others form $P_{\nu}(t)$,
so $M_{\nu}(t)\leq N_{t}$. If $P\in\Pi$ and $\#P\leq N_{t}$, then
$\#S(P)\leq\#S_{t}$; every cube in $S(P)\setminus S_{t}$ has mass
below $t$, every cube in $S_{t}\setminus S(P)$ has mass at least
$t$, and the second set is at least as large as the first, so $\mathcal{L}(P)=\sum_{S(P)}\nu\leq\sum_{S_{t}}\nu=\mathcal{L}_{\nu}(t)$.
This exchange of cubes replaces the merging step in the proof of \cite[Lemma~1.2]{KN25}.

For (4), let $P\in\Pi$ with $\#P\leq N$ and let $A$ be the set
of centres of the cubes of positive mass in $P$. Then $\#A\leq N$
and $d(x,A)\leq\ell(Q)$ for $x\in Q\in P$, so $B_{\nu}(N)\geq\mathcal{L}(P)$
by \eqref{eq:Bnu}, as in \cite{KNZ,KNneg}, where the centres of
an optimal partition serve as a codebook; the supremum over $P$ gives
the lower bound in \eqref{eq:dual-sandwich}, and the upper bound
is \eqref{eq:arbitrary-codebook-bound} combined with (3). Since $B_{\nu}(N)\geq\Gamma_{\nu}(N)$,
the lower and upper limits of $\log N/\Gamma_{\nu}(N)$ are at least
$\underline{D}_{0}(\nu)$ and $\overline{D}_{0}(\nu)$. For the reverse
inequalities put $a_{N}\coloneqq N/\log^{2}N$ and $t_{N}\coloneqq1/a_{N}$
for $N\geq8$, an admissible sequence in the sense of Lemma~\ref{lem:admissible-sequence}.
By (1) and \eqref{eq:max-stopping-depth}, $N_{t_{N}}\leq2^{d}a_{N}K(t_{N})+1\leq N$
for large $N$, so (3) gives $\Gamma_{\nu}(N)\geq\mathcal{L}_{\nu}(t_{N})$
and
\[
\frac{\log N}{\Gamma_{\nu}(N)}\leq\frac{\log N}{\log a_{N}}\cdot\frac{\log a_{N}}{\mathcal{L}_{\nu}(t_{N})}.
\]
Here $\log N/\log a_{N}\to1$, and Lemma~\ref{lem:admissible-sequence},
applied to $x\mapsto\mathcal{L}_{\nu}(1/x)$, shows that the lower
and upper limits of $\log a_{N}/\mathcal{L}_{\nu}(t_{N})$ are $\underline{D}_{0}(\nu)$
and $\overline{D}_{0}(\nu)$ by Theorem~\ref{thm:adaptive-entropy}.
This proves \eqref{eq:dual-dimension}.
\end{proof}

The series $\sum_{Q\in\D}\nu(Q)^{q}$ has abscissa of convergence
$\kappa_{\nu}=1$, the critical exponent of \cite[Section~1.4 and Lemma~2.4]{KN25}.
The mean depth is its Laplace--Stieltjes counting function, and a
Tauberian theorem extracts the dimensions from the behaviour of the
series at $\kappa_{\nu}$.

\begin{corollary}[Tauberian form of the mass-threshold formula]\label{cor:tauberian}Assume
\eqref{eq:uniform-dyadic-mass} and put
\[
Z_{\nu}(q)\coloneqq\sum_{Q\in\D}\nu(Q)^{q}=\sum_{k\geq0}2^{k\beta_{k}(q)},\qquad q>1,
\]
which is finite. With $F_{\nu}(u)\coloneqq\sum_{Q\in\mathcal{D},-\log\nu(Q)\le u}\nu(Q)$,
for $u\in\mathbb{R}$, we have $F_{\nu}(u)=\mathcal{L}_{\nu}(2^{-u})$
for $u>0$ and $F_{\nu}(u)=0$ for $u<0$. In this way we define a
non-decreasing right-continuous function and hence a positive Lebesgue--Stieltjes
measure, including its atom at zero. Then
\begin{equation}
Z_{\nu}(1+\lambda)=\int_{[0,\infty)}2^{-\lambda u}\d F_{\nu}(u)\qquad(\lambda>0),\label{eq:laplace-identity}
\end{equation}
and
\begin{equation}
\frac{1}{\overline{D}_{0}(\nu)}\leq\liminf_{q\downarrow1}(q-1)\ln2\,Z_{\nu}(q)\leq\limsup_{q\downarrow1}(q-1)\ln2\,Z_{\nu}(q)\leq\frac{1}{\underline{D}_{0}(\nu)}.\label{eq:tauberian-bounds}
\end{equation}
The dimension $D_{0}(\nu)$ exists if and only if $(q-1)Z_{\nu}(q)$
converges as $q\downarrow1$, and then
\begin{equation}
D_{0}(\nu)=\Bigl(\lim_{q\downarrow1}(q-1)\ln2\sum_{Q\in\D}\nu(Q)^{q}\Bigr)^{-1}.\label{eq:tauberian-formula}
\end{equation}
\end{corollary}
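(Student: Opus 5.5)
First I establish the identity $F_{\nu}(u)=\mathcal{L}_{\nu}(2^{-u})$ and the transform \eqref{eq:laplace-identity}. For $u>0$ and $t=2^{-u}$, the condition $-\log\nu(Q)\leq u$ is the same as $\nu(Q)\geq t$. Hence $F_{\nu}(u)=\sum_{Q\in S_{t}}\nu(Q)=\mathcal{L}_{\nu}(t)$ by \eqref{eq:weighted-count}. For $u<0$ the sum is empty, since $\nu(Q)\leq1$. The root cube contributes an atom of mass $1$ at $u=0$. Since $F_{\nu}$ is a sum of point masses $\nu(Q)$ placed at $u_{Q}=-\log\nu(Q)\geq0$, it is non-decreasing and right-continuous, and $\d F_{\nu}=\sum_{Q}\nu(Q)\delta_{u_{Q}}$. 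Therefore
\[
\int_{[0,\infty)}2^{-\lambda u}\d F_{\nu}(u)=\sum_{Q}\nu(Q)\nu(Q)^{\lambda}=Z_{\nu}(1+\lambda).
\]
The only step that needs the hypothesis is finiteness. By \eqref{eq:uniform-dyadic-mass} we have $\sum_{Q\in\D_{k}}\nu(Q)^{1+\lambda}\leq(C2^{-ak})^{\lambda}$, which is summable in $k$.

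For the Abelian bounds \eqref{eq:tauberian-bounds} I integrate by parts. Because $F_{\nu}(u)\leq\log^{+}$-linear growth holds by \eqref{eq:stopping-cardinality} (indeed $F_{\nu}(u)\leq u/a+C_{2}$), the boundary terms vanish, and
\[
Z_{\nu}(1+\lambda)=\lambda\ln2\int_{0}^{\infty}2^{-\lambda u}F_{\nu}(u)\d u .
\]
Multiplying by $\lambda\ln2$ and substituting $v=\lambda u\ln2$ gives
\[
\lambda\ln2\,Z_{\nu}(1+\lambda)=\int_{0}^{\infty}\e^{-v}\,v\,\frac{F_{\nu}(v/(\lambda\ln2))}{v/(\lambda\ln2)}\d v .
\]
This is an average of $F_{\nu}(u)/u=\mathcal{L}_{\nu}(2^{-u})/\log(2^{u})$ against the probability density $v\e^{-v}$ (the Gamma(2) kernel). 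Fix $\varepsilon>0$. By Theorem~\ref{thm:adaptive-entropy}, for large $u$ the ratio $F_{\nu}(u)/u$ lies between $1/\overline{D}_{0}(\nu)-\varepsilon$ and $1/\underline{D}_{0}(\nu)+\varepsilon$. For small $u$ it is bounded: $F_{\nu}(u)\leq u/a+C_{2}$, and the kernel near $v=0$ absorbs the $C_{2}$ term since $\int_{0}^{\lambda\ln2\,U}\e^{-v}\d v\to0$. Dominated convergence as $\lambda\downarrow0$ then yields \eqref{eq:tauberian-bounds}.

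For the equivalence, one direction is immediate. If $D_{0}(\nu)$ exists, then $F_{\nu}(u)\sim u/D_{0}(\nu)$, and the Abelian step gives the limit $1/D_{0}(\nu)$. For the converse, suppose $\lambda\ln2\,Z_{\nu}(1+\lambda)\to L$. Writing $s=\lambda\ln2$, the Laplace–Stieltjes transform $\int\e^{-su}\d F_{\nu}(u)$ then behaves like $L/s$ as $s\downarrow0$. Since $F_{\nu}$ is non-decreasing, Karamata's Tauberian theorem (regular variation with index $1$, $\rho=1$, $\Gamma(2)=1$) gives $F_{\nu}(u)\sim Lu$ as $u\to\infty$.

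We also need $L>0$. This follows from \eqref{eq:tauberian-bounds}, because $\overline{D}_{0}(\nu)\leq\sigma_{+}<\infty$. Hence $\mathcal{L}_{\nu}(t)/\log(1/t)\to L$, and Theorem~\ref{thm:adaptive-entropy} gives $D_{0}(\nu)=1/L$, which is \eqref{eq:tauberian-formula}. The constant $\ln2$ does not affect convergence, so convergence of $(q-1)Z_{\nu}(q)$ is equivalent to the statement above.

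The main obstacle is applying the Tauberian theorem correctly. One must check the form of Karamata's theorem for measures on $[0,\infty)$ with an atom at $0$, and keep the change of base between $2^{-\lambda u}$ and $\e^{-su}$ straight. I would cite the standard statement (e.g. Feller XIII.5, or Bingham–Goldie–Teugels Thm.~1.7.1) and verify that the monotonicity hypothesis holds; it does, since $\d F_{\nu}$ is a positive measure.
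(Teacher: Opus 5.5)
Your proposal is correct and follows the paper's proof step by step. The weighted-count identity \eqref{eq:weighted-count} gives $F_\nu(u)=\mathcal{L}_\nu(2^{-u})$ and the transform identity; you sum point masses directly, where the paper groups them by generation via $F_k(u)=\nu\{I_k\le u\}$. Integration by parts then turns $(q-1)\ln2\,Z_\nu(q)$ into a Gamma-kernel average of $F_\nu(u)/u$, and Karamata's theorem with $\rho=1$, together with $L\geq 1/\sigma_+>0$, gives the converse. The one imprecision is calling the Abelian step ``dominated convergence'': $F_\nu(u)/u$ need not converge, so the argument you actually use, and correctly, is the $\varepsilon$-splitting of the kernel, equivalently Fatou and reverse Fatou with the dominating function $e^{-v}(v/a+C_2)$.
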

\begin{proof}
Finiteness follows from $\sum_{Q\in\D_{k}}\nu(Q)^{1+\lambda}\leq(C2^{-ak})^{\lambda}$.
Write $F_{k}(u)\coloneqq\nu\{I_{k}\leq u\}$. Then $F_{\nu}=\sum_{k\geq0}F_{k}$
by \eqref{eq:weighted-count} ; on every bounded interval this sum
is finite by the mass bound. Each $F_{k}$ is a right-continuous distribution
function, and $\int_{[0,\infty)}2^{-\lambda u}\d F_{k}(u)=\int2^{-\lambda I_{k}}\d\nu=\sum_{Q\in\D_{k}}\nu(Q)^{1+\lambda}$.
Summing proves \eqref{eq:laplace-identity}.

Put $\mu\coloneqq\lambda\ln2$ and $\widehat{\mathcal{L}}(\mu)\coloneqq\int_{[0,\infty)}e^{-\mu u}\d F_{\nu}(u)=Z_{\nu}(1+\lambda)$.
Integration by parts gives
\[
\mu\widehat{\mathcal{L}}(\mu)=\mu^{2}\int^{\infty}_{0}e^{-\mu u}F_{\nu}(u)\d u=\int^{\infty}_{0}\frac{F_{\nu}(u)}{u}\,\mu^{2}ue^{-\mu u}\d u,
\]
an average of $\mathcal{L}_{\nu}(t)/\ell_{t}$ with respect to a probability
density whose mass leaves every bounded interval as $\mu\downarrow0$.
Since $\mathcal{L}_{\nu}(t)/\ell_{t}$ is bounded for large threshold
depth by \eqref{eq:stopping-cardinality}, $\liminf_{\mu\downarrow0}\mu\widehat{\mathcal{L}}(\mu)\geq g_{-}$
and $\limsup_{\mu\downarrow0}\mu\widehat{\mathcal{L}}(\mu)\leq g_{+}$,
where $g_{-}\coloneqq\liminf_{t\downarrow0}\mathcal{L}_{\nu}(t)/\log(1/t)=1/\overline{D}_{0}(\nu)$
and $g_{+}\coloneqq\limsup_{t\downarrow0}\mathcal{L}_{\nu}(t)/\log(1/t)=1/\underline{D}_{0}(\nu)$
by Theorem~\ref{thm:adaptive-entropy}. The contribution from any
fixed bounded interval tends to zero because the counting function
is locally bounded. This is \eqref{eq:tauberian-bounds}, and it shows
that the limit in \eqref{eq:tauberian-formula} exists whenever $D_{0}(\nu)$
does. Conversely, if $\mu\widehat{\mathcal{L}}(\mu)\to c$ as $\mu\downarrow0$,
Karamata's Tauberian theorem \cite[Theorem~1.7.1]{BGT87}, applied
to the non-decreasing right-continuous function $F_{\nu}$, gives
$\mathcal{L}_{\nu}(t)\sim c\log(1/t)$ as $t\downarrow0$; thus $D_{0}(\nu)$
exists and equals $1/c$, where $c\geq1/\sigma_{+}>0$ by \eqref{eq:tauberian-bounds}.
\end{proof}

\begin{remark}[Pointwise form and residue]\label{rem:pointwise-tauberian}\label{rem:residue}For
fixed $x$ put $T_{x}(u)\coloneqq\#\{k\geq0:I_{k}(x)\leq u\}$ for
$u\geq0$ and $T_{x}(u)\coloneqq0$ for $u<0$, so that $\tau_{t}(x)=T_{x}(\ell_{t})$;
$T_{x}$ is non-decreasing and right-continuous. Then
\[
\int_{[0,\infty)}2^{-\lambda u}\d T_{x}(u)=\sum_{k\geq0}\nu(Q_{k}(x))^{\lambda}\qquad(\lambda>0),
\]
and \eqref{eq:laplace-identity} is the $\nu$-integral of this identity.
Since the lower and upper limits of $\varsigma_{t}(x)$ are $\underline{s}(x)$
and $\overline{s}(x)$ by \eqref{eq:first-crossing}, and $\tau_{t}(x)\leq K(t)$,
the proof of Corollary~\ref{cor:tauberian} applies pointwise and
gives
\[
\frac{1}{\overline{s}(x)}\leq\liminf_{\lambda\downarrow0}\lambda\ln2\sum_{k\geq0}\nu(Q_{k}(x))^{\lambda}\leq\limsup_{\lambda\downarrow0}\lambda\ln2\sum_{k\geq0}\nu(Q_{k}(x))^{\lambda}\leq\frac{1}{\underline{s}(x)}:
\]
the dyadic local dimension $s(x)$ exists and is finite if and only
if $\lambda\sum_{k}\nu(Q_{k}(x))^{\lambda}$ converges to a positive
limit as $\lambda\downarrow0$, and then $s(x)$ is the reciprocal
of the product of $\ln2$ and this limit. Corollary~\ref{cor:tauberian}
is the $\nu$-integrated form of this criterion, as the harmonic formula
\eqref{eq:local-arithmetic-formula} is the $\nu$-integrated form
of \eqref{eq:inverse-local-growth}; the integrated criterion does
not require the local dimension to exist almost everywhere. The transform
averages $\mathcal{L}_{\nu}(2^{-u})/u$ against the Gamma density
$\mu^{2}ue^{-\mu u}$, where $\mu=(q-1)\ln2$ . The Tauberian theorem
transfers existence of the limit; the bounds alone do not identify
separate lower and upper limits. The abscissa $\kappa_{\nu}=1$ fixes
the location of the singularity of $Z_{\nu}$, and the dimension at
order zero is carried by the rate of blow-up there, in the manner
of a residue; since $Z_{\nu}(q)=\sum_{k}2^{k\beta_{k}(q)}$, this
rate depends on the finite-generation spectra near $q=1$ and not
only on $\overline{\beta}$, from which Theorem~\ref{thm:spectral-enclosure}
extracts only the enclosure $[\sigma_{-},\sigma_{+}]$ (Proposition~\ref{prop:spectral-window}).
\end{remark}

\subsection{Entropy per level and the finite-threshold harmonic mean}

\label{sec:adaptive-spectrum}

Theorem~\ref{thm:adaptive-entropy} uses only the mean depth. The
entropy of the stopping partition and the entropy per unit depth,
\begin{equation}
\mathcal{H}_{\nu}(t)\coloneqq-\sum_{Q\in P_{\nu}(t)}\nu(Q)\log\nu(Q),\qquad R_{\nu}(t)\coloneqq\frac{\mathcal{H}_{\nu}(t)}{\mathcal{L}_{\nu}(t)},\label{eq:stopping-entropy-def}
\end{equation}
give an equivalent formulation of the theorem and an exact finite-scale
identity that anticipates the harmonic formula of Remark~\ref{cor:local-harmonic}:
the quantization dimension of order zero is the entropy per level
of refinement of the adaptive tree, whereas the entropy dimension
is the entropy per level of the uniform tree.

\begin{lemma}[Entropy per level]\label{prop:harmonic-stopping}Assume
\eqref{eq:uniform-dyadic-mass}, let $0<t<1$, and let $C_{1}\geq1$
be as in \eqref{eq:stopping-cardinality}. Then
\begin{equation}
\log(1/t)\leq\mathcal{H}_{\nu}(t)\leq\log M_{\nu}(t)\leq\log(1/t)+\log\!\bigl(C_{1}(1+\log(1/t))\bigr),\label{eq:stopping-entropy-asymptotic}
\end{equation}
consequently
\begin{equation}
0\leq R_{\nu}(t)-\frac{\log(1/t)}{\mathcal{L}_{\nu}(t)}\leq d\,\frac{\log\!\bigl(C_{1}(1+\log(1/t))\bigr)}{\log(1/t)}\longrightarrow0,\label{eq:threshold-replacement-error}
\end{equation}
and Theorem~\ref{thm:adaptive-entropy} takes the equivalent form
\begin{equation}
\underline{D}_{0}(\nu)=\liminf_{t\downarrow0}R_{\nu}(t),\qquad\overline{D}_{0}(\nu)=\limsup_{t\downarrow0}R_{\nu}(t),\label{eq:adaptive-lower}
\end{equation}
where $t\downarrow0$ may be replaced by $t=N^{-c}$ for any fixed
$c>0$.\end{lemma}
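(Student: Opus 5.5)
The three inequalities in \eqref{eq:stopping-entropy-asymptotic} come from the basic facts about $P_{\nu}(t)$ in Lemma~\ref{lem:stopping-size}; the rest then follows by dividing by $\mathcal{L}_{\nu}(t)$ and invoking Theorem~\ref{thm:adaptive-entropy}.

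\emph{The entropy bounds.} By Lemma~\ref{lem:stopping-size}(1), $P_{\nu}(t)$ is a finite partition of $\nu$-almost all of $\mathcal{Q}$ into $M_{\nu}(t)$ cubes of positive mass, each of mass $<t$.
\begin{itemize}
\item Lower bound: $-\log\nu(Q)>\log(1/t)$ for every $Q\in P_{\nu}(t)$ and the masses sum to one, so $\mathcal{H}_{\nu}(t)\geq\log(1/t)$.
\item Middle bound: $\mathcal{H}_{\nu}(t)\leq\log M_{\nu}(t)$ is the standard bound for the entropy of a probability vector with $M_{\nu}(t)$ entries (Jensen for the concave logarithm).
\item Upper bound: insert $M_{\nu}(t)\leq C_{1}t^{-1}(1+\log(1/t))$ from \eqref{eq:stopping-cardinality}.
\end{itemize}
Since $C_{1}\geq1$, the logarithmic error term is nonnegative.

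\emph{The ratio bound \eqref{eq:threshold-replacement-error}.} Subtract $\log(1/t)$ from the three bounds above to get
\[
0\leq\mathcal{H}_{\nu}(t)-\log(1/t)\leq\log\bigl(C_{1}(1+\log(1/t))\bigr).
\]
Then divide by $\mathcal{L}_{\nu}(t)$ and use $\mathcal{L}_{\nu}(t)\geq\log(1/t)/d$ from \eqref{eq:stopping-cardinality}, which gives $1/\mathcal{L}_{\nu}(t)\leq d/\log(1/t)$. The right-hand side tends to $0$ because $\log\log(1/t)=o(\log(1/t))$.

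\emph{The equivalent form \eqref{eq:adaptive-lower}.} The difference $R_{\nu}(t)-\log(1/t)/\mathcal{L}_{\nu}(t)$ tends to zero, so $R_{\nu}(t)$ and $\log(1/t)/\mathcal{L}_{\nu}(t)$ have the same lower and upper limits as $t\downarrow0$. These limits are $\underline{D}_{0}(\nu)$ and $\overline{D}_{0}(\nu)$ by Theorem~\ref{thm:adaptive-entropy}.

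The same argument works along $t=N^{-c}$. There the difference still tends to zero as $N\to\infty$, and the theorem already permits sampling along $t=N^{-c}$.

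\emph{Main obstacle.} There is no real obstacle. The only points needing care are that $M_{\nu}(t)$ counts exactly the positive-mass cubes, so the entropy bound $\log M_{\nu}(t)$ applies to the right probability vector, and that the lower depth bound $\log(1/t)/d$ is what turns the additive error into a vanishing relative error.
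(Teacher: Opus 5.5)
Your proposal is correct and follows the paper's own proof essentially step for step. You use the entropy lower bound from stopping masses below $t$, the $\log M_{\nu}(t)$ bound with \eqref{eq:stopping-cardinality}, division by $\mathcal{L}_{\nu}(t)\geq\log(1/t)/d$, and then Theorem~\ref{thm:adaptive-entropy} for both the $t\downarrow0$ and the $t=N^{-c}$ limits.
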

\begin{proof}
Every stopping mass is less than $t$ and these masses sum to one,
hence $\mathcal{H}_{\nu}(t)\geq\log(1/t)$; by Jensen's inequality
the entropy is at most the logarithm of the number of atoms, and \eqref{eq:stopping-cardinality}
gives the upper bounds in \eqref{eq:stopping-entropy-asymptotic}.
Dividing $0\leq\mathcal{H}_{\nu}(t)-\log(1/t)\leq\log(C_{1}(1+\log(1/t)))$
by $\mathcal{L}_{\nu}(t)\geq\log(1/t)/d$ proves \eqref{eq:threshold-replacement-error},
and \eqref{eq:adaptive-lower} with the sampling statement follows
from Theorem~\ref{thm:adaptive-entropy}. No lower bound on the ratios
$\nu(Q)/t$ is assumed.
\end{proof}

Moreover, $R_{\nu}(t)$ is an exact weighted harmonic mean of the
cube-wise dimensions: with $s_{Q}\coloneqq\log\nu(Q)/\log\ell(Q)>0$
and $w_{Q}(t)\coloneqq\nu(Q)(-\log\nu(Q))/\mathcal{H}_{\nu}(t)$ for
$Q\in P_{\nu}(t)$, so that $\sum_{Q}w_{Q}(t)=1$, the identity $-\log\ell(Q)=(-\log\nu(Q))/s_{Q}$
gives $\mathcal{L}_{\nu}(t)/\mathcal{H}_{\nu}(t)=\sum_{Q}w_{Q}(t)/s_{Q}$,
that is,
\[
R_{\nu}(t)=\Bigl(\sum_{Q\in P_{\nu}(t)}\frac{w_{Q}(t)}{s_{Q}}\Bigr)^{-1},\qquad\min_{Q}s_{Q}\leq R_{\nu}(t)\leq\max_{Q}s_{Q}.
\]

The dimension estimate $\varsigma_{t}$ of \eqref{eq:threshold-estimate}
is the cube-wise dimension $s_{Q}=X_{\tau_{t}(x)}(x)$ of the stopping
cube $Q\ni x$ up to the overshoot, see \eqref{eq:stopped-rate}.
For $0<t<1$ put
\begin{equation}
\mathds{h}_{\nu}(t)\coloneqq\int\varsigma_{t}\d\nu,\qquad\mathsf{D}_{\nu}(t)\coloneqq\frac{\log(1/t)}{\mathcal{L}_{\nu}(t)}=\Bigl(\int\varsigma^{-1}_{t}\d\nu\Bigr)^{-1},\qquad\mathsf{V}_{\nu}(t)\coloneqq\operatorname{Var}_{\nu}(\varsigma_{t}),\label{eq:threshold-means}
\end{equation}
its arithmetic mean, its harmonic mean, which is the quantity of Theorem~\ref{thm:adaptive-entropy},
and its variance. The generation-indexed counterparts are $X_{k}$,
the entropy per level $h_{k}(\nu)=\int X_{k}\d\nu$ with lower and
upper limits $\underline{h}(\nu),\overline{h}(\nu)$, and $\operatorname{Var}_{\nu}(X_{k})=\beta_{k}''(1)/(k\ln2)$.
Note that $\mathds{h}_{\nu}(t)$ is a mean of dimension estimates,
whereas $\mathcal{H}_{\nu}(t)$ in \eqref{eq:stopping-entropy-def}
is an entropy of order $\log(1/t)$; the relation between $\mathds{h}_{\nu}(t)$
and $\underline{h}(\nu),\overline{h}(\nu)$ is given in (3) and (4)
below and in \eqref{eq:threshold-arithmetic-bounds}. We call
\[
\overline{\mathsf{V}}(\nu)\coloneqq\limsup_{t\downarrow0}\mathsf{V}_{\nu}(t)
\]
the upper threshold variance of the local dimension. It is finite
by (1) below, requires no convergence of any kind, and is the variance
of the limit law under the hypothesis of Proposition~\ref{prop:spectral-window}.

\begin{proposition}[Threshold means and the upper threshold variance]\label{prop:threshold-variance}Assume
\eqref{eq:uniform-dyadic-mass}, let $0<t<1$, and put $a_{t}\coloneqq\log(1/t)/K(t)$,
so that $a_{t}\to a$ as $t\downarrow0$.\begin{enumerate}\item
$a_{t}\leq\varsigma_{t}\leq\log(1/t)$, and $\nu\{\varsigma_{t}\geq v\}\leq t^{1-d/v}$
for $v>d$. Consequently $\int((\varsigma_{t}-d)^{+})^{n}\d\nu\to0$
as $t\downarrow0$ for every $n\geq1$, all moments of $\varsigma_{t}$
are bounded for $t\leq1/2$, and
\begin{equation}
\mathsf{D}_{\nu}(t)=\Bigl(\int\varsigma^{-1}_{t}\d\nu\Bigr)^{-1}\leq\mathds{h}_{\nu}(t),\qquad a_{t}\leq\mathsf{D}_{\nu}(t),\qquad\mathds{h}_{\nu}(t)\leq d+o(1).\label{eq:threshold-harmonic}
\end{equation}
\item The defect has the exact representations
\begin{equation}
\mathds{h}_{\nu}(t)-\mathsf{D}_{\nu}(t)=\int\frac{(\varsigma_{t}-\mathsf{D}_{\nu}(t))^{2}}{\varsigma_{t}}\d\nu=\frac{\mathsf{D}_{\nu}(t)}{\mathds{h}_{\nu}(t)}\int\frac{(\varsigma_{t}-\mathds{h}_{\nu}(t))^{2}}{\varsigma_{t}}\d\nu,\label{eq:threshold-defect}
\end{equation}
and
\begin{equation}
\frac{\mathsf{D}_{\nu}(t)}{\mathds{h}_{\nu}(t)}\Bigl(\frac{\mathsf{V}_{\nu}(t)}{d}-\varepsilon_{t}\Bigr)\leq\mathds{h}_{\nu}(t)-\mathsf{D}_{\nu}(t)\leq\frac{\mathsf{D}_{\nu}(t)}{\mathds{h}_{\nu}(t)}\cdot\frac{\mathsf{V}_{\nu}(t)}{a_{t}},\label{eq:threshold-variance-bounds}
\end{equation}
where $\varepsilon_{t}\coloneqq d^{-2}\int(\varsigma_{t}-\mathds{h}_{\nu}(t))^{2}(\varsigma_{t}-d)^{+}\d\nu\to0$
as $t\downarrow0$. In particular $\overline{\mathsf{V}}(\nu)=0$
if and only if $\mathds{h}_{\nu}(t)-\mathsf{D}_{\nu}(t)\to0$ as $t\downarrow0$.\item
If $\overline{\mathsf{V}}(\nu)=0$, then $\beta_{k}''(1)=o(k)$ and
\[
\underline{h}(\nu)=\liminf_{t\downarrow0}\mathds{h}_{\nu}(t)=\underline{D}_{0}(\nu),\qquad\overline{h}(\nu)=\limsup_{t\downarrow0}\mathds{h}_{\nu}(t)=\overline{D}_{0}(\nu).
\]
\item If the laws of $X_{k}$ under $\nu$ converge weakly to $\varrho$,
then the laws of $\varsigma_{t}$ converge weakly to $\varrho$ as
$t\downarrow0$, and
\begin{equation}
\begin{aligned}\mathds{h}_{\nu}(t) & \to\int y\d\varrho(y)=h(\nu),\qquad\mathsf{D}_{\nu}(t)\to\Bigl(\int\frac{1}{y}\d\varrho(y)\Bigr)^{-1}=D_{0}(\nu),\\
\mathsf{V}_{\nu}(t) & \to\operatorname{Var}(\varrho)=\overline{\mathsf{V}}(\nu)=\lim_{k\to\infty}\frac{\beta_{k}''(1)}{k\ln2},
\end{aligned}
\label{eq:threshold-consistency}
\end{equation}
where the identifications on the right are those of Proposition~\ref{prop:spectral-window}.
In this case $\overline{\mathsf{V}}(\nu)=0$, $\beta_{k}''(1)=o(k)$
and $h(\nu)=D_{0}(\nu)$ are equivalent, and \eqref{eq:threshold-defect}
becomes \eqref{eq:exact-gap-identity} in the limit.\end{enumerate}\end{proposition}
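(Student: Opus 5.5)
The four items are proved in order, and each later item uses the earlier ones.

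\emph{Item (1).} Since $1\leq\tau_{t}\leq K(t)$ by Lemma~\ref{lem:stopping-size}, we get $a_{t}\leq\varsigma_{t}\leq\log(1/t)$. For the tail bound, $\varsigma_{t}\geq v$ means $\tau_{t}\leq\log(1/t)/v$. The set $\{\tau_{t}\leq k\}$ is a union of stopping cubes of generation at most $k$, and each such cube has mass $<t$. At most $2^{dk}$ of them are needed to cover it, because every point with $\tau_t\le k$ lies in a cube of $\D_k$ of mass $<t$. Hence
\[
\nu\{\tau_{t}\leq k\}\leq t\,2^{dk},
\]
and taking $k=\log(1/t)/v$ gives $t^{1-d/v}$. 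Integrating this tail bound (layer cake, with $\varsigma_t\le\log(1/t)$ to absorb the polylog factor) gives $\int((\varsigma_{t}-d)^{+})^{n}\d\nu\to0$ and bounded moments. The harmonic–arithmetic mean inequality gives $\mathsf{D}_\nu\le\mathds h_\nu$. Averaging $1/\varsigma_t\le1/a_t$ gives $a_t\le\mathsf D_\nu$. Finally $\mathds h_\nu\le d+\int(\varsigma_t-d)^+\d\nu$.

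\emph{Item (2).} Write $\mathsf{D}=\mathsf{D}_\nu(t)$ and $\mathsf{h}=\mathds h_\nu(t)$. The first identity is algebra using $\int\varsigma_t^{-1}\d\nu=1/\mathsf{D}$:
\[
\int\frac{(\varsigma-\mathsf{D})^2}{\varsigma}\d\nu=\mathsf{h}-2\mathsf{D}+\mathsf{D}^2/\mathsf{D}=\mathsf{h}-\mathsf{D}.
\]
The second identity is the analogous expansion:
\[
\int\frac{(\varsigma-\mathsf{h})^2}{\varsigma}\d\nu=\mathsf{h}-2\mathsf{h}+\mathsf{h}^2/\mathsf{D}=\frac{\mathsf{h}(\mathsf{h}-\mathsf{D})}{\mathsf{D}}.
\]
For the upper bound in \eqref{eq:threshold-variance-bounds}, use $1/\varsigma\le1/a_t$. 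For the lower bound, use
\[
\frac1\varsigma\geq\frac1d-\frac{(\varsigma-d)^+}{d^2},
\]
which follows from convexity of $y\mapsto1/y$ (tangent line at $d$). Then $\varepsilon_t\to0$ follows by Cauchy–Schwarz from item (1). The equivalence then follows because $\mathsf{D}/\mathsf{h}$ is bounded above and below by positive constants, using $a_t\le\mathsf D\le\mathsf h\le d+o(1)$.

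\emph{Item (3).} This is the main obstacle: we must link the threshold index to generations. The plan is to compare $h_k=\int X_k\d\nu$ with $\mathds h_\nu(t)$, using the fact that $\varsigma_t\approx X_{\tau_t}$ by \eqref{eq:stopped-rate}.

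First, vanishing threshold variance says $\varsigma_t$ concentrates near $\mathsf h(t)$ in $L^2$. Then $\tau_t\approx\log(1/t)/\mathsf h(t)$ in probability, since $\varsigma_t\ge a_t$ keeps the reciprocal controlled.

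Second, for a fixed $k$, choose $t$ with $\log(1/t)=k\,\mathsf h(t)$; this is possible by monotonicity in $\ell_t$. Then $\{\tau_t>k\}=\{X_k\le\ell_t/k\}$ by \eqref{eq:stopping-as-level-set}, so $X_k$ concentrates near $\ell_t/k$. Together with the uniform integrability from item (1) and the Markov bounds \eqref{eq:markov-window}, this gives $\operatorname{Var}(X_k)\to0$, i.e.\ $\beta_k''(1)=o(k)$. It also gives that $h_k$ and $\mathds h_\nu(t)$ run through the same cluster values.

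Finally, combining with item (2) and Theorem~\ref{thm:adaptive-entropy} identifies the lower and upper limits with $\underline D_0$ and $\overline D_0$. The hardest part is making ``same cluster values'' rigorous despite the overshoot. I would attempt it via the exact relation $\tau_t(x)=\#\{k:I_k(x)\le\ell_t\}$ and sandwich arguments.

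\emph{Item (4).} Assume $X_k\Rightarrow\varrho$. Then
\[
\nu\{\varsigma_t>y\}=\nu\{\tau_t<\ell_t/y\}\approx\nu\{X_{k}>y\}\quad\text{with }k\approx\ell_t/y,
\]
which converges to $\varrho(y,\infty)$ at continuity points. Weak convergence together with the moment bounds of item (1) and the lower bound $\varsigma_t\ge a_t$ then yields convergence of the mean, the harmonic mean and the variance. The identifications on the right of \eqref{eq:threshold-consistency} are those of Proposition~\ref{prop:spectral-window}. The stated equivalences follow from $\operatorname{Var}(\varrho)=0\iff$ mean equals harmonic mean.
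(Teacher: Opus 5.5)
Your items (1), (2) and (4) follow the paper's proof. In (4) the ``$\approx$'' is in fact an identity: with $k=\lceil\ell_t/y\rceil-1$ one has $\{\varsigma_t\le y\}=\{\tau_t>k\}=\{X_k\le\ell_t/k\}$ and $\ell_t/k\to y$, and the squeeze between continuity points of $\varrho$ finishes the argument.

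The gap is in item (3), at the step you flag yourself, and two of your steps fail as stated.

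First, ``for fixed $k$ choose $t$ with $\log(1/t)=k\,\mathds{h}_{\nu}(t)$'' is not always possible. Since $\log(1/t)/\mathds{h}_{\nu}(t)=(\int\tau_t^{-1}\d\nu)^{-1}$, this is a step function of $\ell_t$, and it can jump by a factor bounded away from $1$. Take the construction of Remark~\ref{rem:generation-variance} with fair digits outside the deterministic runs. Then $I_k=A(k)$ $\nu$-almost everywhere, so $\varsigma_t$ is constant and $\overline{\mathsf{V}}(\nu)=0$. Yet every $\tau_t$ jumps from $k_i$ to $2k_i+1$ when $\ell_t$ reaches $A(k_i)$. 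No threshold corresponds to the generations in $(k_i,2k_i]$, so your argument gives no information on $X_k$ for those $k$.

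Second, even when $\tau_t/k\to1$ in probability, you only learn $I_{k_-}\le\ell_t<I_{k_+}$ off a small set, where $k_\pm=k\pm\lfloor\eta k\rfloor$. To conclude that $X_k$ is close to $\ell_t/k$, you need that the information rarely grows by $\varepsilon k$ across $2\eta k$ generations. The Markov bounds \eqref{eq:markov-window} do not give this. They control deviations of $X_k$ through $\beta_k$ near $q=1$, i.e.\ through the very curvature $\beta_k''(1)$ you want to show is $o(k)$. Through $\overline{\beta}$ they only localise $X_k$ in $[\sigma_-,\sigma_+]$. They contain no link between generations and thresholds. The missing ingredient is the conditional-entropy bound $\int(I_{k+j}-I_k)\d\nu=H_{k+j}(\nu)-H_k(\nu)\le dj$, used together with $I_{k+j}\ge I_k$ and Markov's inequality.

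With this bound, the paper chooses the threshold from the generation rather than the other way round.
\begin{itemize}
\item Suppose $X_k$ does not concentrate along a subsequence. Tightness and uniform integrability give $u_k\in[ak/2,(d+1)k]$ and $\varepsilon,\delta>0$ with $\nu\{I_k\le u_k-\varepsilon k\}\ge\delta$ and $\nu\{I_k\ge u_k+\varepsilon k\}\ge\delta$.
\item For $\eta<\delta\varepsilon/(4d)$, the increment bound yields $\nu\{\tau_{2^{-u_k}}\le k_-\}\ge\delta/2$ and $\nu\{\tau_{2^{-u_k}}>k_+\}\ge\delta/2$.
\item So $\varsigma_{2^{-u_k}}$ puts mass at least $\delta/2$ on either side of a gap of size about $\eta a$, contradicting $\overline{\mathsf{V}}(\nu)=0$. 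Hence $\operatorname{Var}_\nu(X_k)\to0$ along all generations.
\end{itemize}

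Your ``same cluster values'' step then needs two separate arguments, which you do not supply.
\begin{itemize}
\item At $k=\lfloor\mathcal{L}_\nu(t)\rfloor$, the convergence $\tau_t/\mathcal{L}_\nu(t)\to1$ and the increment bound give $|I_k-\ell_t|\le I_{k_+}-I_{k_-}$ off a small set. Hence $h_k-\mathsf{D}_\nu(t)\to0$, which gives $\underline{h}\le\underline{D}_0\le\overline{D}_0\le\overline{h}$.
\item The reverse inequalities use the thresholds $u_k^\pm=H_k(\nu)\pm\varepsilon k$, generation concentration and $\tau_t\le K(t)$. These give $\mathsf{D}_\nu(2^{-u_k^-})\ge h_k-\varepsilon-o(1)$ and $\mathsf{D}_\nu(2^{-u_k^+})\le h_k+\varepsilon+o(1)$.
\end{itemize}
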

\begin{proof}
For (1), $\tau_{t}\geq1$ gives $\varsigma_{t}\leq\log(1/t)$, and
$\tau_{t}\leq K(t)$ by \eqref{eq:max-stopping-depth} gives $\varsigma_{t}\geq a_{t}$.
For $v>d$ let $k\coloneqq\lfloor\log(1/t)/v\rfloor$. Then $\varsigma_{t}(x)\geq v$
means $\tau_{t}(x)\leq k$, hence $\nu(Q_{k}(x))<t$, and the total
mass of the generation-$k$ cubes of mass below $t$ is at most $2^{dk}t\leq t^{1-d/v}$.
Writing $\int((\varsigma_{t}-d)^{+})^{n}\d\nu=\int^{\infty}_{0}nv^{n-1}\nu\{\varsigma_{t}>d+v\}\d v$,
the integrand vanishes for $v>\log(1/t)$, is at most $nv^{n-1}t^{v/(2d)}$
for $0<v\leq d$ because $v/(d+v)\geq v/(2d)$, and is at most $nv^{n-1}t^{1/2}$
for $v>d$; hence the integral is at most $n!\,(2d/\ln(1/t))^{n}+\log(1/t)^{n}t^{1/2}\to0$.
Together with $\varsigma_{t}\leq d+(\varsigma_{t}-d)^{+}$ this bounds
all moments. The identity in \eqref{eq:threshold-harmonic} is the
definition $\mathcal{L}_{\nu}(t)=\int\tau_{t}\d\nu$, the inequality
is Jensen's, $\mathsf{D}_{\nu}(t)\geq a_{t}$ follows from $\mathcal{L}_{\nu}(t)\leq K(t)$,
and $\mathds{h}_{\nu}(t)\leq d+\int(\varsigma_{t}-d)^{+}\d\nu$.

For (2), all moments of $\varsigma_{t}$ and of $\varsigma^{-1}_{t}$
are finite, so expanding the squares as in the proof of Theorem~\ref{thm:quantitative-gap},
with the law of $\varsigma_{t}$ in place of $\varrho$, gives $\int(\varsigma_{t}-\mathsf{D})^{2}\varsigma^{-1}_{t}\d\nu=\mathsf{h}-2\mathsf{D}+\mathsf{D}^{2}\int\varsigma^{-1}_{t}\d\nu=\mathsf{h}-\mathsf{D}$
and $\int(\varsigma_{t}-\mathsf{h})^{2}\varsigma^{-1}_{t}\d\nu=\mathsf{h}-2\mathsf{h}+\mathsf{h}^{2}/\mathsf{D}=(\mathsf{h}/\mathsf{D})(\mathsf{h}-\mathsf{D})$,
where $\mathsf{h}=\mathds{h}_{\nu}(t)$ and $\mathsf{D}=\mathsf{D}_{\nu}(t)$.
The upper bound in \eqref{eq:threshold-variance-bounds} uses $\varsigma^{-1}_{t}\leq a^{-1}_{t}$
in the second representation. For the lower bound, $1/y\geq1/d-(y-d)^{+}/d^{2}$
for all $y>0$, since this reduces to $(y-d)^{2}\geq0$ when $y>d$;
inserting this in the second representation gives the bound with $\varepsilon_{t}$,
and $\varepsilon_{t}\to0$ by the Cauchy--Schwarz inequality, the
bounded fourth moments and $\int((\varsigma_{t}-d)^{+})^{2}\d\nu\to0$
from (1). Since $a_{t}/(d+o(1))\leq\mathsf{D}_{\nu}(t)/\mathds{h}_{\nu}(t)\leq1$
by \eqref{eq:threshold-harmonic}, the bounds \eqref{eq:threshold-variance-bounds}
show that $\mathds{h}_{\nu}(t)-\mathsf{D}_{\nu}(t)\to0$ if and only
if $\mathsf{V}_{\nu}(t)\to0$.

For (3), put $\mathcal{L}\coloneqq\mathcal{L}_{\nu}(t)$. Since $\varsigma_{t}\geq a_{t}\to a>0$,
vanishing threshold variance implies $\varsigma^{-1}_{t}-\int\varsigma^{-1}_{t}\d\nu\to0$
in $L^{2}(\nu)$. As $\int\varsigma^{-1}_{t}\d\nu=\mathcal{L}/\ell_{t}$
is bounded above and away from zero, $\tau_{t}/\mathcal{L}\to1$ in
$L^{2}(\nu)$. First we prove concentration at every generation. Otherwise,
tightness and uniform integrability of $X^{2}_{k}$ provide a subsequence,
constants $\varepsilon,\delta>0$, and thresholds $u_{k}\in[ak/2,(d+1)k]$
such that both $\nu\{I_{k}\leq u_{k}-\varepsilon k\}\geq\delta$ and
$\nu\{I_{k}\geq u_{k}+\varepsilon k\}\geq\delta$. Choose $0<\eta<\min\{1/2,\delta\varepsilon/(4d)\}$
and set $k_{\pm}=k\pm\lfloor\eta k\rfloor$. Conditional entropy gives
$\int(I_{k}-I_{k_{-}})\d\nu\leq d\eta k$ and $\int(I_{k_{+}}-I_{k})\d\nu\leq d\eta k$.
Markov's inequality therefore gives, for large $k$, $\nu\{\tau_{2^{-u_{k}}}\leq k_{-}\}\geq\delta/2$
and $\nu\{\tau_{2^{-u_{k}}}>k_{+}\}\geq\delta/2$. The corresponding
values of $\varsigma_{2^{-u_{k}}}$ lie on opposite sides of $u_{k}/k_{-}$
and $u_{k}/k_{+}$, whose difference is bounded below by a positive
constant. This contradicts vanishing threshold variance. Hence $\operatorname{Var}_{\nu}(X_{k})\to0$
and $X_{k}-h_{k}\to0$ in probability along all generations. Next
set $k=\lfloor\mathcal{L}\rfloor$ and $k_{\pm}=\lfloor(1\pm\eta)\mathcal{L}\rfloor$.
Outside a set of probability tending to zero, $k_{-}<\tau_{t}\leq k_{+}$,
so $|I_{k}-\ell_{t}|\leq I_{k_{+}}-I_{k_{-}}$. The expectation of
this increment is at most $d(k_{+}-k_{-})$; Markov's inequality,
followed by $\eta\downarrow0$, yields $X_{k}-\ell_{t}/k\to0$ in
probability. Uniform integrability gives $h_{k}-\mathsf{D}_{\nu}(t)\to0$.
Thus $\underline{h}\leq\underline{D}_{0}\leq\overline{D}_{0}\leq\overline{h}$.
For the reverse outer inequalities fix $0<\varepsilon<a/2$ and put
$u^{\pm}_{k}=H_{k}(\nu)\pm\varepsilon k$. The already proved generation
concentration implies $\nu\{\tau_{2^{-u^{-}_{k}}}>k\}\to0$ and $\nu\{\tau_{2^{-u^{+}_{k}}}>k\}\to1$
. The uniform stopping-depth bound then gives $\mathcal{L}_{\nu}(2^{-u^{-}_{k}})\leq k+o(k)$
and $\mathcal{L}_{\nu}(2^{-u^{+}_{k}})\geq k(1-o(1))$. Consequently
$\mathsf{D}_{\nu}(2^{-u^{-}_{k}})\geq h_{k}-\varepsilon-o(1)$ and
$\mathsf{D}_{\nu}(2^{-u^{+}_{k}})\leq h_{k}+\varepsilon+o(1)$. Taking
the appropriate subsequences and then $\varepsilon\downarrow0$ proves
$\underline{D}_{0}\leq\underline{h}$ and $\overline{D}_{0}\geq\overline{h}$.
Part (2) identifies the limits of $\mathds{h}_{\nu}$, and $\beta_{k}''(1)/(k\ln2)=\operatorname{Var}_{\nu}(X_{k})\to0$
completes (3).

For (4), let $y>0$ be a continuity point of $W(c)\coloneqq\varrho((-\infty,c])$
and put $k\coloneqq\lceil\log(1/t)/y\rceil-1$, so that $k\to\infty$
and $\log(1/t)/k\to y$ as $t\downarrow0$. Since $\varsigma_{t}\leq y$
means $\tau_{t}\geq k+1$, that is, $I_{k}\leq\log(1/t)$ by \eqref{eq:stopping-time},
we have $\nu\{\varsigma_{t}\leq y\}=\nu\{X_{k}\leq\log(1/t)/k\}$,
and the squeeze between continuity points used in the proof of Proposition~\ref{prop:spectral-window}(3)
shows that this converges to $W(y)$. Hence the laws of $\varsigma_{t}$
converge weakly to $\varrho$, and by the bounded moments in (1) and
the bound $\varsigma^{-1}_{t}\leq a^{-1}_{t}$, the means of $\varsigma_{t}$,
$\varsigma^{2}_{t}$ and $\varsigma^{-1}_{t}$ converge to those of
$\varrho$; this proves \eqref{eq:threshold-consistency}, the identifications
of the limits being Proposition~\ref{prop:spectral-window}(3), and
the three equivalent conditions are Theorem~\ref{thm:quantitative-gap}.
The convergence $\mathsf{D}_{\nu}(t)\to(\int y^{-1}\d\varrho)^{-1}$,
combined with Theorem~\ref{thm:adaptive-entropy}, is a second proof
of the harmonic formula in that proposition.
\end{proof}

\begin{remark}[Generation-indexed variance and the limits of (3)]\label{rem:generation-variance}The
spectral counterpart of $\overline{\mathsf{V}}(\nu)$ is $\overline{V}(\nu)\coloneqq\limsup_{k}\beta_{k}''(1)/(k\ln2)=\limsup_{k}\operatorname{Var}_{\nu}(X_{k})$,
and $\overline{V}(\nu)=0$ if and only if $X_{k}-h_{k}(\nu)\to0$
in probability, which is weaker than almost-everywhere constancy of
the local dimension (Example~\ref{ex:law-not-ae}). Proposition~\ref{prop:threshold-variance}
shows that $\overline{\mathsf{V}}(\nu)=0$ implies $\overline{V}(\nu)=0$,
with equivalence under convergence in law; in general the threshold
is the consistent index for a variance of the quantization defect,
because the mean depth averages the laws of $X_{k}$ over a multiplicative
window of generations (proof of Proposition~\ref{prop:spectral-window}(3)).
One explicit construction shows both failures of converse. Fix $0<p<1/2$,
write $s\coloneqq h_{2}(p)$ and $b=-\log(1-p)$, and let $k_{i}=2^{2^{i}}$.
Set the binary digits in each run $(k_{i},2k_{i}]$ deterministically
to zero; all other digits are independent Bernoulli$(p)$. Let $A(k)$
count the non-deterministic digits up to $k$. Then $A(k)\geq k/3$
for all sufficiently large $k$, so the standing mass bound holds
with exponent $b/3$ after adjusting its constant. Also $I_{k}$ is
a sum of $A(k)$ independent copies of a bounded non-constant information
variable of mean $s$. Thus $\operatorname{Var}(X_{k})=O(1/k)$, while
almost surely $X_{k}-sA(k)/k\to0$ and the lower and upper local dimensions
are $s/2$ and $s$. At $u_{i}=sA(k_{i})$, the central limit theorem
splits the mass asymptotically in half according to whether $I_{k_{i}}>u_{i}$.
Bounded positive information increments at non-deterministic digits
and the flat deterministic run give $\tau_{2^{-u_{i}}}/k_{i}\Rightarrow\tfrac{1}{2}\delta_{1}+\tfrac{1}{2}\delta_{2}$.
Since $u_{i}/k_{i}\to s$, uniform integrability from Proposition
\ref{prop:threshold-variance}(1) yields $\mathsf{V}_{\nu}(2^{-u_{i}})\to s^{2}/16>0$.
The estimate is asymptotically two-valued, not exactly two-valued
at finite thresholds. At the shifted thresholds $u^{\pm}_{i}=u_{i}\pm k^{3/4}_{i}$,
concentration of the same sums gives $\tau_{2^{-u^{-}_{i}}}/k_{i}\to1$
and $\tau_{2^{-u^{+}_{i}}}/k_{i}\to2$ in probability and in mean,
using the uniform depth bound. Hence $\mathsf{D}_{\nu}(2^{-u^{-}_{i}})\to s$
and $\mathsf{D}_{\nu}(2^{-u^{+}_{i}})\to s/2$. The local harmonic
bounds give the reverse bounds on the extrema, so $\underline{h}=\underline{D}_{0}=s/2$
and $\overline{h}=\overline{D}_{0}=s$, although $\overline{\mathsf{V}}>0$.
Thus neither generation-variance decay nor equality of the lower and
upper dimension pairs implies vanishing threshold variance.\end{remark}
\begin{example}[A line--square mixture and its two spectra]
\label{ex:line-square}\label{ex:two-spectra}Let $E_{1}=[0,1/4]\times\{0\}$,
$E_{2}=[1/2,1]^{2}$ and $\nu=(\lambda_{E_{1}}+\lambda_{E_{2}})/2$
with normalised length and area, a variant of the interval--Cantor
mixture of \cite[Example~4.1]{GL04}. The local dimension is $1$
on $E_{1}$ and $2$ on $E_{2}$, so Remark~\ref{cor:local-harmonic}
gives $h(\nu)=3/2$ and $D_{0}(\nu)=4/3$. For $k\geq2$ the segment
contributes $2^{k-2}$ cubes of mass $2^{1-k}$ and the square $2^{2k-2}$
cubes of mass $2^{1-2k}$, so $H_{k}(\nu)=\tfrac{3}{2}k-1$, $\beta(q)=2(1-q)$
for $0<q\leq1$ and $\beta(q)=1-q$ for $q>1$: the ordinary spectrum
exists at every $q>0$ but has a corner at $1$, which is what leaves
room for $h(\nu)\neq D_{0}(\nu)$. By contrast, since $t^{q-1}\leq\sum_{Q\in P_{\nu}(t)}\nu(Q)^{q}\leq M_{\nu}(t)^{1-q}$
for $0<q<1$ and conversely for $q>1$, \eqref{eq:stopping-cardinality}
gives $\log\sum_{Q\in P_{\nu}(t)}\nu(Q)^{q}=(1-q)\log(1/t)+O(\log\log(1/t))$,
and with $\mathcal{L}_{\nu}(t)=\tfrac{3}{4}\log(1/t)+O(1)$ the $L^{q}$-sums
of the adaptive partition, normalised by $\mathcal{L}_{\nu}(t)$,
tend to the linear function $\tfrac{4}{3}(1-q)$, which agrees with
$\beta$ only at $q=1$.
\end{example}

\section{General local and classical dimension bounds}

\label{sec:local-principle}

We continue to assume \eqref{eq:uniform-dyadic-mass}. The local quantities
\eqref{eq:local-information}--\eqref{eq:local-exponents} are dyadic
information exponents; their use requires no assumption about the
mass of dyadic boundary hyperplanes.

\begin{lemma}[Uniform integrability at fixed generations]\label{lem:information-UI}
For every $\theta\in\R$ and every $n\geq1$ the families $(e^{-\theta X_{k}})_{k}$
and $(X^{n}_{k})_{k}$ are uniformly integrable, and the laws of the
$X_{k}$ are tight. Moreover, 
\[
a\leq\liminf_{k\to\infty}X_{k}(x)\leq\limsup_{k\to\infty}X_{k}(x)\leq d\quad\text{for \ensuremath{\nu}-almost every }x.
\]
The upper bound and uniform integrability hold for every probability
measure on the dyadic root, without \eqref{eq:uniform-dyadic-mass}.

\end{lemma}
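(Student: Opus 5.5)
The proof rests on two elementary tail estimates for $X_k$ under $\nu$: an upper tail that holds for every probability measure on the root cube, and a lower tail that uses \eqref{eq:uniform-dyadic-mass}. For the upper tail, fix $v>d$. The set $\{X_k\geq v\}$ is the union of the cubes $Q\in\D_k$ with $0<\nu(Q)\leq2^{-vk}$. There are at most $2^{dk}$ such cubes, so
\[
\nu\{X_k\geq v\}\leq2^{dk}\,2^{-vk}=2^{-(v-d)k}.
\]
This estimate needs no hypothesis on $\nu$. For the lower tail, \eqref{eq:uniform-dyadic-mass} gives $I_k\geq ak-\log C$ pointwise, that is, $X_k\geq a-(\log C)/k$ everywhere.

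Next I show uniform integrability of $(X^n_k)_k$, which again needs no hypothesis. Since $X_k\geq0$, I write
\[
\int X^n_k\mathbf 1_{\{X_k>d+1+w\}}\d\nu\leq\int_{d+1+w}^{\infty}nv^{n-1}\nu\{X_k>v\}\d v+(d+1+w)^n\nu\{X_k>d+1+w\}.
\]
By the upper tail bound each term is at most a constant times $\int_{w}^{\infty}(1+v)^{n}2^{-(v-d)}\d v$ uniformly in $k\geq1$, and this tends to $0$ as $w\to\infty$. Tightness follows directly from the same tail bound.

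For $(e^{-\theta X_k})_k$ I split on the sign of $\theta$. If $\theta\geq0$, the lower bound $X_k\geq a-(\log C)/k\geq-\log C$ makes $e^{-\theta X_k}$ uniformly bounded, so uniform integrability is trivial. If $\theta<0$, write $e^{|\theta|X_k}$. Using the upper tail,
\[
\int e^{|\theta|X_k}\mathbf 1_{\{X_k>v_0\}}\d\nu\leq\sum_{j\geq v_0}e^{|\theta|(j+1)}\nu\{X_k>j\}\leq\sum_{j\geq v_0}e^{|\theta|(j+1)}2^{-(j-d)k}.
\]
This is the step I expect to be the main obstacle. For $k=1$ the tail decays only like $2^{-(j-d)}$, which may fail to dominate $e^{|\theta|j}$ when $|\theta|$ is large. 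So the estimate must be done differently for small $k$ and large $k$. For $k\geq k_0(\theta)$, chosen so that $k\ln2>|\theta|+1$, the series above is uniformly small once $v_0$ is large. For each of the finitely many $k<k_0$, the variable $X_k$ takes finitely many values: $I_k$ takes at most $2^{dk}$ values, all finite because zero-mass cubes are discarded. Hence each such $e^{|\theta|X_k}$ is bounded, and a finite family of bounded functions is uniformly integrable. Combining the two ranges of $k$ gives uniform integrability of the whole family.

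Finally, the almost-everywhere bounds. The lower bound $a\leq\liminf_k X_k$ follows directly from the pointwise estimate $X_k\geq a-(\log C)/k$. For the upper bound I apply Borel--Cantelli. For each $\varepsilon>0$ the upper tail gives $\sum_k\nu\{X_k\geq d+\varepsilon\}\leq\sum_k2^{-\varepsilon k}<\infty$. Hence $\limsup_k X_k\leq d+\varepsilon$ almost everywhere, and taking $\varepsilon=1/m$ over $m\in\mathbb{N}$ gives $\limsup_k X_k\leq d$ almost everywhere. This step also uses no hypothesis on $\nu$, which together with the uniform integrability of $(X^n_k)_k$ proves the final sentence of the lemma.
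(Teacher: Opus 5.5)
Your proposal is correct and follows essentially the paper's route: the hypothesis-free tail bound $\nu\{X_k>v\}\le 2^{-k(v-d)}$, a tail-integral estimate for all large $k$ with the finitely many small $k$ disposed of separately, Borel--Cantelli for the almost-sure upper bound, and the pointwise bound $X_k\ge a-(\log C)/k$ for the lower one. One small point: for $\theta\ge0$ you should use $X_k\ge0$ (so $e^{-\theta X_k}\le1$) rather than the mass bound. Then uniform integrability of the exponential family is also hypothesis-free, as the last sentence of the lemma asserts; the paper gets both signs at once from $e^{-\theta X_k}\le e^{|\theta|X_k}$.
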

\begin{proof}
For $v>d$, every generation-$k$ cube contributing to $\{X_{k}>v\}$
has mass less than $2^{-kv}$. There are at most $2^{dk}$ cubes,
so 
\begin{equation}
\nu\{X_{k}>v\}\leq2^{-k(v-d)}.\label{eq:information-tail}
\end{equation}
Since $X_{k}\geq0$, the laws of the $X_{k}$ are tight, and for fixed
$\theta\in\R$, $R>2d$ and $k\ln2\geq2|\theta|$, 
\[
\int e^{|\theta|X_{k}}\mathbf{1}_{\{X_{k}>R\}}\d\nu\leq e^{|\theta|R}\,2^{-k(R-d)}+|\theta|\int^{\infty}_{R}e^{|\theta|v}\,2^{-k(v-d)}\d v\leq2\cdot2^{-k(R/2-d)},
\]
which tends to zero as $R\to\infty$ uniformly in such $k$; the finitely
many remaining $k$ are harmless. The same estimate with $e^{|\theta|v}$
replaced by $v^{n}$ gives the uniform integrability of $(X^{n}_{k})_{k}$.
For every fixed $v>d$, \eqref{eq:information-tail} is summable in
$k$; Borel--Cantelli and a countable sequence $v\downarrow d$ give
the almost-sure upper bound. Finally, \eqref{eq:uniform-dyadic-mass}
implies $X_{k}\geq a-(\log C)/k$, proving the lower bound.
\end{proof}

\begin{lemma}[Balls, dyadic cubes, and normalised restrictions]\label{lem:balls-restrictions}
Using closed maximum-norm balls, for a probability measure $\mu$
on the dyadic root, the lower and upper dyadic local exponents equal
the corresponding ball local dimensions almost everywhere. If $E$
is Borel with $\mu(E)>0$, these exponents are unchanged almost everywhere
on $E$ when $\mu$ is replaced by $\mu_{E}=\mu|_{E}/\mu(E)$. Moreover,
a uniform dyadic bound with exponent $a>0$ is equivalent, up to constants,
to 
\begin{equation}
\mu(B(x,r))\leq C'r^{a}\qquad(x\in\R^{d},\ 0<r\leq1),\label{eq:uniform-ball-mass}
\end{equation}
and is inherited by $\mu_{E}$, with a constant allowed to depend
on $\mu(E)$.

\end{lemma}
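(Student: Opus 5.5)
The lemma has three assertions: dyadic versus ball exponents, restriction to $E$, and equivalence and inheritance of the uniform bounds. I would prove them in reverse order, because the last two are elementary comparisons and the first needs a density argument.

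\emph{Uniform bounds.} Every dyadic cube $Q\in\D_{k}$ with $k\geq1$ is contained in a closed maximum-norm ball of radius $2^{-k}$ about any of its points. Hence \eqref{eq:uniform-ball-mass} gives $\nu(Q)\leq C'2^{-ak}$.

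Conversely, for $2^{-k-1}<r\leq2^{-k}$, a closed ball $B(x,r)$ is a cube of side at most $2^{1-k}$. It therefore meets at most $3^{d}$ cubes of $\D_{k}$, so $\mu(B(x,r))\leq3^{d}C2^{-ak}\leq3^{d}C2^{a}r^{a}$.

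For $\mu_{E}$ both bounds hold trivially, with $C$ replaced by $C/\mu(E)$, since $\mu_{E}\leq\mu/\mu(E)$.

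\emph{Dyadic versus ball exponents.} One inequality is pointwise. Since $Q_{k}(x)\subseteq B(x,2^{-k})$, we get $-\log\mu(B(x,2^{-k}))\leq I_{k}(x)$. Monotonicity of $r\mapsto\mu(B(x,r))$ between dyadic radii then gives that the lower and upper ball dimensions are at most $\underline{s}(x)$ and $\overline{s}(x)$.

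The reverse direction is where the boundary effect lives: $B(x,2^{-k})$ can be much heavier than $Q_{k}(x)$ when $x$ lies near a face of $Q_{k}(x)$. I would use the standard Borel--Cantelli argument. Fix $\varepsilon>0$ and consider
\[
E_{k}\coloneqq\{x:\mu(Q_{k}(x))<2^{-\varepsilon k}\mu(B(x,2^{-k}))\}.
\]
For each $Q\in\D_{k}$, the set $E_{k}\cap Q$ is contained in $Q$, and $B(x,2^{-k})\subseteq\widehat{Q}$, the union of $Q$ with its at most $3^{d}$ neighbours. So on $E_{k}\cap Q$ we have $\mu(Q)<2^{-\varepsilon k}\mu(\widehat{Q})$, and hence
\[
\mu(E_{k})\leq\sum_{Q\subseteq E_{k}\text{-cubes}}\mu(Q)\leq2^{-\varepsilon k}\sum_{Q}\mu(\widehat{Q})\leq3^{d}2^{-\varepsilon k}.
\]
This is summable in $k$. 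Borel--Cantelli then gives $\mu(Q_{k}(x))\geq2^{-\varepsilon k}\mu(B(x,2^{-k}))$ eventually, for almost every $x$. Letting $\varepsilon\downarrow0$ along a sequence yields $I_{k}(x)\leq-\log\mu(B(x,2^{-k}))+\varepsilon k$, which is the reverse inequality. I expect this comparison to be the only real obstacle, and the neighbour-counting trick above is what I would try first.

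\emph{Restrictions.} I would deduce this from the ball statement. By the Lebesgue–Besicovitch density theorem for Radon measures on $\R^{d}$ with closed balls (valid for any norm via Besicovitch covering), for $\mu$-almost every $x\in E$,
\[
\mu(B(x,r)\cap E)/\mu(B(x,r))\to1\qquad(r\downarrow0).
\]
Consequently $\log\mu_{E}(B(x,r))=\log\mu(B(x,r))-\log\mu(E)+o(1)$, so the ball local dimensions of $\mu$ and $\mu_{E}$ agree almost everywhere on $E$. Applying the first part to both $\mu$ and $\mu_{E}$ transfers this to the dyadic exponents.
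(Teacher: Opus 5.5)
Your argument is correct, but it reaches the restriction statement by a different route from the paper's.

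The first two parts are essentially the paper's proof. The uniform-bound equivalence is the same bounded-neighbour count. For the almost-everywhere comparison, the paper likewise enlarges each $Q\in\D_k$ to the union $Q^{+}$ of its neighbours. It bounds the mass of the cubes with $\mu(Q^{+})>2^{\varepsilon k}\mu(Q)$ by $C_d2^{-\varepsilon k}$ via bounded overlap, applies Borel--Cantelli for rational $\varepsilon$, and uses monotonicity in the radius to pass from dyadic radii to all radii. Your set $E_k$ lies in the union of exactly those cubes. Two small slips: your sum should run over the cubes meeting $E_k$, and $\widehat Q$ consists of $3^d$ cubes including $Q$.

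The genuine difference is in the restriction statement. The paper proves it directly at the dyadic level: martingale convergence along the dyadic filtration gives $\mu(E\cap Q_k(x))/\mu(Q_k(x))=\mathbb{E}_\mu[\mathbf 1_E\mid\sigma(\D_k)](x)\to1$ for $\mu$-a.e. $x\in E$. Hence $\mu_E(Q_k(x))/\mu(Q_k(x))\to1/\mu(E)$, and the normalised logarithms are unchanged.

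You instead invoke the Lebesgue--Besicovitch density theorem for maximum-norm balls, then pass through the first part twice, once for $\mu$ and once for $\mu_E$. This is valid: Besicovitch covering holds for any norm on $\R^d$, and for centred cubes it is classical. The cost is that you import a covering theorem where the dyadic filtration already provides a differentiation theorem. It also makes the dyadic restriction statement depend on the ball comparison, whereas the paper's argument is independent of it. In return, your route yields invariance of the ball local dimensions under restriction as a direct intermediate result.
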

\begin{proof}
A ball of radius comparable to $2^{-k}$ meets only a bounded number
of generation-$k$ cubes. Conversely, a cube lies in a ball of comparable
radius. This proves equivalence of the uniform bounds. For the almost-everywhere
comparison, enlarge each generation-$k$ cube $Q$ to the union $Q^{+}$
of the bounded family of its neighbouring cubes needed to contain
$B(x,2^{-k})$ for $x\in Q$. These enlargements have uniformly bounded
overlap. For $\varepsilon>0$, the total $\mu$-mass of cubes $Q\in\D_{k}$
satisfying $\mu(Q^{+})>2^{\varepsilon k}\mu(Q)$ is at most 
\[
2^{-\varepsilon k}\sum_{Q\in\D_{k}}\mu(Q^{+})\leq C_{d}2^{-\varepsilon k}.
\]
Borel--Cantelli gives almost everywhere $\limsup\left(\log\mu\left(Q^{+}\right)-\log\mu\left(Q\right)\right)/k\leq\varepsilon$.
Applied for rational $\varepsilon>0$, and the inclusions $Q_{k}(x)\subset B(x,2^{-k})\subset Q_{k}(x)^{+}$
show that the logarithmic masses differ by $o(k)$ almost everywhere.
Monotonicity in the radius extends the conclusion from dyadic radii
to all radii. Finally, dyadic conditional expectations give 
\[
\frac{\mu(E\cap Q_{k}(x))}{\mu(Q_{k}(x))}\longrightarrow1\quad\text{for almost every }x\in E.
\]
Consequently $\mu_{E}(Q_{k}(x))/\mu(Q_{k}(x))\to1/\mu(E)$ there,
which leaves both normalised logarithmic limits unchanged. The inherited
mass bound follows from $\mu_{E}(B)\leq\mu(B)/\mu(E)$.
\end{proof}

\begin{proposition}
[Integral bounds without local convergence]\label{prop:local-liminf-bounds}
Under \eqref{eq:uniform-dyadic-mass}, the dyadic local dimensions
$\underline{s},\overline{s}$ of \eqref{eq:local-exponents} satisfy
\begin{equation}
\int\underline{s}\d\nu\leq\underline{h}(\nu)\leq\overline{h}(\nu)\leq\int\overline{s}\d\nu\label{eq:local-arithmetic-bounds}
\end{equation}
and 
\begin{equation}
\left(\int\frac{1}{\underline{s}}\d\nu\right)^{-1}\leq\underline{D}_{0}(\nu)\leq\overline{D}_{0}(\nu)\leq\left(\int\frac{1}{\overline{s}}\d\nu\right)^{-1}.\label{eq:local-harmonic-bounds}
\end{equation}
All reciprocals and integrals in these formulas are finite and positive.
The bounds \eqref{eq:local-arithmetic-bounds} hold verbatim for the
threshold means $\mathds{h}_{\nu}(t)$ of \eqref{eq:threshold-means}:
\begin{equation}
\int\underline{s}\d\nu\leq\liminf_{t\downarrow0}\mathds{h}_{\nu}(t)\leq\limsup_{t\downarrow0}\mathds{h}_{\nu}(t)\leq\int\overline{s}\d\nu.\label{eq:threshold-arithmetic-bounds}
\end{equation}
\end{proposition}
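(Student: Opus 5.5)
The plan is to combine Fatou's lemma and its reverse with the pointwise limits \eqref{eq:first-crossing} and \eqref{eq:local-exponents}, using the uniform integrability of Lemma~\ref{lem:information-UI} and the bounds of Proposition~\ref{prop:threshold-variance}(1) for domination.

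\emph{Finiteness and positivity.} By Lemma~\ref{lem:information-UI}, $a\leq\underline{s}\leq\overline{s}\leq d$ almost everywhere. Hence $1/\underline{s}$ and $1/\overline{s}$ lie in $[1/d,1/a]$, and all the integrals in \eqref{eq:local-arithmetic-bounds} and \eqref{eq:local-harmonic-bounds} are finite and positive.

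\emph{Arithmetic bounds.} We have $h_{k}(\nu)=\int X_{k}\d\nu$ with $X_{k}\geq0$, so Fatou's lemma gives $\int\underline{s}\d\nu\leq\liminf_{k}\int X_{k}\d\nu=\underline{h}(\nu)$. For the upper bound, the family $(X_{k})$ is uniformly integrable by Lemma~\ref{lem:information-UI}. The reverse Fatou lemma for uniformly integrable families, applied to $(X_{k}-R)^{+}$ with truncation at $R$ and then $R\to\infty$, gives $\limsup_{k}\int X_{k}\d\nu\leq\int\limsup_{k}X_{k}\d\nu=\int\overline{s}\d\nu$.

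The threshold version \eqref{eq:threshold-arithmetic-bounds} follows in the same way. We apply Fatou to $\varsigma_{t}\geq0$ along an arbitrary sequence $t_{n}\downarrow0$, using $\liminf_{t}\varsigma_{t}=\underline{s}$ from \eqref{eq:first-crossing}. Uniform integrability of $(\varsigma_{t})_{t\leq1/2}$ follows from the bounded moments in Proposition~\ref{prop:threshold-variance}(1), and this gives the reverse Fatou bound with $\limsup_{t}\varsigma_{t}=\overline{s}$. Since \eqref{eq:first-crossing} concerns the continuous limit $t\downarrow0$, applying Fatou along any sequence $t_{n}\downarrow0$ is enough: the pointwise $\liminf$ along a sequence is at least $\underline{s}$, and the $\limsup$ is at most $\overline{s}$.

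\emph{Harmonic bounds.} By Theorem~\ref{thm:adaptive-entropy} and \eqref{eq:depth-as-integral},
\[
\frac{1}{\overline{D}_{0}(\nu)}=\liminf_{t\downarrow0}\int\varsigma_{t}^{-1}\d\nu,\qquad\frac{1}{\underline{D}_{0}(\nu)}=\limsup_{t\downarrow0}\int\varsigma_{t}^{-1}\d\nu.
\]
Here $0<\varsigma_{t}^{-1}=\tau_{t}/\log(1/t)\leq K(t)/\log(1/t)\leq 2/a$ for small $t$, by Lemma~\ref{lem:stopping-size}. The reciprocal swaps the pointwise limits: $\liminf_{t}\varsigma_{t}^{-1}=1/\overline{s}$ and $\limsup_{t}\varsigma_{t}^{-1}=1/\underline{s}$. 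Fatou's lemma then gives $\int1/\overline{s}\d\nu\leq1/\overline{D}_{0}(\nu)$. Dominated (reverse) Fatou with the constant bound $2/a$ gives $1/\underline{D}_{0}(\nu)\leq\int1/\underline{s}\d\nu$. Reciprocating these two inequalities yields \eqref{eq:local-harmonic-bounds}; the middle inequality is trivial.

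\emph{Main obstacle.} The only delicate point is passing from $t\downarrow0$ through a continuum to sequences in the Fatou arguments. For the lower limit, choose a sequence $t_{n}$ realising $\liminf_{t}\int\varsigma_{t}^{-1}\d\nu$ and apply Fatou along it; the pointwise $\liminf$ along that sequence dominates $1/\overline{s}$. The upper limit is handled symmetrically. No measurability problem arises, since each $\varsigma_{t_{n}}$ is a simple function.
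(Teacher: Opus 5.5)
Your proposal is correct and follows the paper's proof essentially step for step. Like the paper, you use Fatou and reverse Fatou, with truncation and the uniform integrability of Lemma~\ref{lem:information-UI}, for the arithmetic bounds. For the harmonic bounds you apply the same pair along sequences $t_n\downarrow0$ to the uniformly bounded $\varsigma_t^{-1}=\tau_t/\log(1/t)$, combined with \eqref{eq:first-crossing} and Theorem~\ref{thm:adaptive-entropy}. For the threshold means you use the bounded moments of Proposition~\ref{prop:threshold-variance}(1).

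One small point of phrasing: reverse Fatou is applied to the bounded truncations $\min\{X_k,R\}$. The term $(X_k-R)^+$ is the tail, whose integral is uniformly small by uniform integrability; it is not the function reverse Fatou is applied to.
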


\begin{proof}
Fatou's lemma and uniform integrability from Lemma~\ref{lem:information-UI}
give \eqref{eq:local-arithmetic-bounds}. For the upper inequality
one can first truncate $X_{k}$ at a constant $w$, apply reverse
Fatou to the bounded truncations, and then let $w\to\infty$; the
tails are uniformly negligible.

By \eqref{eq:first-crossing}, $\liminf_{t\downarrow0}\varsigma_{t}=\underline{s}$
and $\limsup_{t\downarrow0}\varsigma_{t}=\overline{s}$ pointwise.
The functions $1/\varsigma_{t}=\tau_{t}/\log(1/t)$ are uniformly
bounded for $0<t\leq1/2$ by \eqref{eq:max-stopping-depth}, and $\int\varsigma^{-1}_{t}\d\nu=\mathcal{L}_{\nu}(t)/\log(1/t)$
by \eqref{eq:depth-as-integral}. Fatou and reverse Fatou, applied
to $1/\varsigma_{t}$ along arbitrary sequences $t_{n}\downarrow0$,
imply 
\[
\int\frac{1}{\overline{s}}\d\nu\leq\liminf_{t\downarrow0}\frac{\mathcal{L}_{\nu}(t)}{\log(1/t)}\leq\limsup_{t\downarrow0}\frac{\mathcal{L}_{\nu}(t)}{\log(1/t)}\leq\int\frac{1}{\underline{s}}\d\nu.
\]
Reciprocation and Theorem~\ref{thm:adaptive-entropy} prove \eqref{eq:local-harmonic-bounds}.
For \eqref{eq:threshold-arithmetic-bounds}, the second moments of
$\varsigma_{t}$ are bounded for $t\leq1/2$ by Proposition~\ref{prop:threshold-variance}(1),
so Fatou's lemma and reverse Fatou apply to $\varsigma_{t}$ itself,
as for \eqref{eq:local-arithmetic-bounds}.
\end{proof}

\begin{remark}[Attribution]\label{rem:attribution-local}The arithmetic
bounds \eqref{eq:local-arithmetic-bounds} are essentially known.
The upper bound is \cite[Theorem~IV.3]{ST12}, stated there for Rényi's
entropy dimension defined through balls, which coincides with the
dyadic one because the two entropies differ at each scale by an additive
constant depending only on $d$; the essential-supremum version is
\cite[Theorem~1.3]{FLR02}, in the tradition of Young's dimension
comparisons \cite{Young82}. The lower bound is immediate from Fatou's
lemma in the dyadic setting and transfers to balls through Lemma~\ref{lem:balls-restrictions},
a case stated as open in \cite[Section~IV.B]{ST12}. The harmonic
bounds \eqref{eq:local-harmonic-bounds} appear to be new in this
form; Remark~\ref{prop:recovery-zhu} shows how they can also be
obtained from \cite{GL04,Zhu12}.\end{remark}

\begin{remark}[Classical dimension bounds]\label{rem:heurteaux-dimensions}
For the dimensions defined in \eqref{eq:measure-H-definitions}, \cite[Theorems~2.3 and~2.5]{Heu07}
and Lemma~\ref{lem:balls-restrictions} give 
\begin{equation}
\begin{aligned}\dim_{*}(\nu) & =\mathop{\mathrm{ess\,inf}}_{\nu}\underline{s}, & \quad\dim^{*}(\nu) & =\mathop{\mathrm{ess\,sup}}_{\nu}\underline{s},\\
\mathrm{Dim}_{*}(\nu) & =\mathop{\mathrm{ess\,inf}}_{\nu}\overline{s}, & \quad\mathrm{Dim}^{*}(\nu) & =\mathop{\mathrm{ess\,sup}}_{\nu}\overline{s}.
\end{aligned}
\label{eq:measure-dimensions-local}
\end{equation}
Both Hausdorff dimensions use the lower local exponent, and both packing
dimensions use the upper one. The quantization chain of \cite[Theorem~2.1, (2.6)]{Zhu12}
can also be recovered independently from Proposition~\ref{prop:local-liminf-bounds}.
Indeed, the standing mass bound gives $0<a\leq\mathop{\mathrm{ess\,inf}}_{\nu}\underline{s}\leq\mathop{\mathrm{ess\,sup}}_{\nu}\overline{s}\leq d$,
and the harmonic integral bounds imply
\[
\begin{aligned}\mathop{\mathrm{ess\,inf}}_{\nu}\underline{s} & \leq\left(\int\frac{1}{\underline{s}}\d\nu\right)^{-1}\leq\underline{D}_{0}(\nu)\leq\overline{D}_{0}(\nu)\\
 & \leq\left(\int\frac{1}{\overline{s}}\d\nu\right)^{-1}\leq\mathop{\mathrm{ess\,sup}}_{\nu}\overline{s},
\end{aligned}
\]
whose outer terms are $\dim_{*}(\nu)$ and $\mathrm{Dim}^{*}(\nu)$
by \eqref{eq:measure-dimensions-local}. This proves the same classical
bound without using Zhu's theorem: Proposition~\ref{prop:local-liminf-bounds}
follows from Theorem~\ref{thm:adaptive-entropy} and Fatou's lemma.
No almost-everywhere local limit is needed.

\end{remark}

\section{Spectral bounds and neighbouring orders}

\label{sec:entropy}

We compare the general dimension bounds with the ordinary upper spectrum.
The arguments use only upper limits; convergence of the finite-generation
spectra is not assumed.

\subsection{Entropy and one-sided slopes}

\begin{proposition}[Entropy and one-sided slopes]\label{prop:entropy-only}
For $0<q<1<q'$, 
\begin{equation}
\frac{\overline{\beta}(q')}{1-q'}\leq\underline{h}(\nu)\leq\overline{h}(\nu)\leq\frac{\overline{\beta}(q)}{1-q}.\label{eq:entropy-limsup-spectrum-bound}
\end{equation}
Consequently, with the one-sided derivatives of the upper spectrum
from \eqref{eq:reflected-subdifferential}, 
\begin{equation}
\sigma_{-}=-\overline{\beta}'_{+}(1)\leq\underline{h}(\nu)\leq\overline{h}(\nu)\leq-\overline{\beta}'_{-}(1)=\sigma_{+}.\label{eq:one-sided-entropy}
\end{equation}

\end{proposition}
\begin{proof}
Convexity and $\beta_{k}(1)=0$ give 
\begin{equation}
\beta_{k}'(1)=\frac{\sum_{Q\in\D_{k}}\nu(Q)\log\nu(Q)}{k}=-h_{k}(\nu),\label{eq:beta-derivative}
\end{equation}
\[
\frac{\beta_{k}(q')}{1-q'}\leq h_{k}(\nu)\leq\frac{\beta_{k}(q)}{1-q}.
\]
Taking lower and upper limits proves \eqref{eq:entropy-limsup-spectrum-bound};
the denominator $1-q'<0$ turns the liminf on the left into the limsup
of $\beta_{k}(q')$. The function $\overline{\beta}$ is finite and
convex on $(0,\infty)$: pass to the limsup in the convexity inequality.
Letting $q'\downarrow1$ and $q\uparrow1$ proves \eqref{eq:one-sided-entropy}.
This is the entropy part of the classical comparison in \cite[Theorem~3.1]{Heu07}.
\end{proof}

\subsection{Critical parameters and the full spectral enclosure}

The critical values $q_{r}$ of the partition functions of \cite{KNZ,KNneg}
can be located on the ordinary upper spectrum. For $r\geq0$ the supremum
in the set function $\mathfrak{J}_{\nu,r}$ of \cite{KNneg} is attained
at the cube itself and $q_{r}$ solves $\overline{\beta}(q_{r})=rq_{r}$
\cite[Section~1.3]{KNneg}; for negative $r$ the partition function
need not coincide with $\overline{\beta}(q)-rq$, and \cite[Proposition~3.8]{KNhigh}
gives only the two-sided bound $\overline{\beta}(q)-rq\leq\Theta_{r}(q)\leq\overline{\beta}\bigl(q(1+r/\dim_{\infty}(\nu))\bigr)$
in the notation below. The lemma shows that the zero is nevertheless
determined by $\overline{\beta}$.

\begin{lemma}[The critical zero and the ordinary upper spectrum]\label{lem:critical-zero}Assume
\eqref{eq:uniform-dyadic-mass} and let $r>-\dim_{\infty}(\nu)$.
Put
\begin{align*}
\mathcal{J}_{r}(Q) & \coloneqq\sup_{Q'\subseteq Q,\ Q'\in\D}\nu(Q')\ell(Q')^{r},\\
\Theta_{r}(q) & \coloneqq\limsup_{k\to\infty}\frac{1}{k}\log\sum_{Q\in\D_{k}}\mathcal{J}_{r}(Q)^{q},\qquad q>0,
\end{align*}
and $q_{r}\coloneqq\inf\{q>0:\Theta_{r}(q)<0\}$. Then
\begin{equation}
q_{r}\quad\text{is the unique solution of}\quad\overline{\beta}(q_{r})=rq_{r}.\label{eq:critical-root}
\end{equation}
In particular $q_{0}=1$, $q_{r}>1$ for $r<0$, $q_{r}<1$ for $r>0$,
and $q_{r}\to1$ as $r\to0$.\end{lemma}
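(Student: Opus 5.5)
The plan is to squeeze $\Theta_{r}$ between two functions built from $\overline{\beta}$ that share the same zero. Then $q_{r}$ is read off from the function $g_{r}(q)\coloneqq\overline{\beta}(q)-rq$.

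First, $g_{r}$ is convex and finite on $(0,\infty)$ by Proposition~\ref{prop:entropy-only}. Moreover $\overline{\beta}$ has slope at most $-\dim_{\infty}(\nu)$ asymptotically: for $q'>q$,
\[
\sum_{Q}\nu(Q)^{q'}\leq(\max_{Q}\nu(Q))^{q'-q}\sum_{Q}\nu(Q)^{q}.
\]
Hence $g_{r}$ has slopes at most $-\dim_{\infty}(\nu)-r<0$, so $g_{r}$ is strictly decreasing. Near $0$ we have $g_{r}(q)\to\overline{\beta}(0^{+})\geq0$, and in fact $\overline{\beta}(q)>0$ for $q<1$. These facts give a unique zero of $g_{r}$, provided $g_{r}$ changes sign. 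For $r\geq0$, $g_{r}(1)=-r\leq0$ and $g_{r}(q)>0$ for small $q$. For $r<0$, $g_{r}(1)=-r>0$ and $g_{r}\to-\infty$. This yields the sign statements $q_{0}=1$, $q_{r}\gtrless1$. The continuity $q_{r}\to1$ follows from continuity of the convex function $g_{r}$ in $(r,q)$ together with strict monotonicity in $q$, which has a uniform slope bound near $q=1$.

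Second, I identify the zero of $\Theta_{r}$. For $r\geq0$ the supremum defining $\mathcal{J}_{r}(Q)$ is attained at $Q'=Q$, so $\Theta_{r}(q)=g_{r}(q)$ exactly. For $r<0$, taking $Q'=Q$ gives the lower bound $\Theta_{r}\geq g_{r}$. For the upper bound I use the estimate quoted from \cite[Proposition~3.8]{KNhigh},
\[
\Theta_{r}(q)\leq\overline{\beta}\bigl(q(1+r/\dim_{\infty}(\nu))\bigr).
\]
This alone has the wrong zero, so I would instead rederive a sharper bound directly. For $Q\in\D_{k}$ and $Q'\subseteq Q$ of generation $k+j$, the mass bound gives
\[
\nu(Q')\ell(Q')^{r}\leq\min\{\nu(Q)2^{-rk},\,C2^{-(a+r)(k+j)}\}\cdot2^{-rj},
\]
with $a<\dim_{\infty}(\nu)$ chosen so that $a+r>0$. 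Summing the $q$-th powers of all $\nu(Q')\ell(Q')^{r}$ over descendants, as a sup-to-sum bound, gives
\[
\sum_{Q\in\D_{k}}\mathcal{J}_{r}(Q)^{q}\leq\sum_{j\geq0}2^{-rqk}\sum_{Q'\in\D_{k+j}}\nu(Q')^{q}2^{-rqj}.
\]
Hence
\[
\Theta_{r}(q)\leq\limsup_{k}\frac{1}{k}\log\sum_{j\geq0}2^{(k+j)(\beta_{k+j}(q)-rq)}.
\]
Whenever $g_{r}(q)<0$ the terms decay geometrically in $j$, which forces $\Theta_{r}(q)<0$.

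Combining the two directions: $\Theta_{r}<0$ exactly where $g_{r}<0$, up to the boundary point. Both $\Theta_{r}\geq g_{r}$ and the strict monotonicity of $g_{r}$ then give $q_{r}=\inf\{g_{r}<0\}$, which is the root of $\overline{\beta}(q)=rq$.

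The main obstacle is the upper bound for $r<0$. The sum over $j$ must be controlled uniformly using only the limsup $\overline{\beta}$, and I would absorb the $\eta$-errors as in the proof of Lemma~\ref{lem:stopping-size}. I also need to check that the geometric decay survives, which requires the decay $g_{r}(q)+\eta<0$ to apply to $\beta_{k+j}$ for all large $k+j$; it does, since $k+j\geq k$.
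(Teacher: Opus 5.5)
Your proof is correct. It differs from the paper's at the one substantive step, the identification of $q_r$.

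\textbf{The paper's route.} The paper first invokes \cite[Lemma~2.4]{KN25}, after checking that $\mathcal{J}_r$ is monotone, uniformly vanishing and locally non-vanishing with $\infty$-dimension at least $a_0+r>0$. This replaces $q_r$ by the abscissa of convergence $\kappa_r$ of the full series $\sum_{Q\in\D}\mathcal{J}_r(Q)^q$. The paper then sandwiches that series between $\sum_k 2^{k(\beta_k(q)-rq)}$ and $\sum_k (k+1)\,2^{k(\beta_k(q)-rq)}$, where the factor $k+1$ counts the ancestors of each cube, and applies the root test.

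\textbf{Your route.} You apply the same sup-to-sum estimate at a fixed generation. Since each cube of $\D_n$, $n\ge k$, has exactly one ancestor in $\D_k$, you obtain
\[
2^{k(\beta_k(q)-rq)}\le\sum_{Q\in\D_k}\mathcal{J}_r(Q)^q\le\sum_{n\ge k}2^{n(\beta_n(q)-rq)},
\]
which works directly with the definition of $\Theta_r$ and needs no external lemma.

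\textbf{What each buys.} Your version is self-contained and gives the exact equality $\{q:\Theta_r(q)<0\}=\{q:\overline{\beta}(q)-rq<0\}$ for every $r>-\dim_\infty(\nu)$. The hedge ``up to the boundary point'' is unnecessary: $\Theta_r\ge g_r$ gives one inclusion, and geometric decay of the tail gives the other. Likewise the separate treatment of $r\ge0$ and the detour through \cite{KNhigh} are not needed. The paper's route, in return, places $q_r$ in the critical-exponent and Dirichlet-series framework used throughout the paper, with $\kappa_0=\kappa_\nu=1$ the abscissa of $Z_\nu$.

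\textbf{Minor points.} The displayed bound on $\nu(Q')\ell(Q')^r$ has a stray factor and should read $\min\{\nu(Q)2^{-r(k+j)},\,C2^{-(a+r)(k+j)}\}$, though you never use it. Also, $q_r\to1$ follows more cleanly from your own slope bound, which gives $\dim_\infty(\nu)\,|q_r-1|\le|\overline{\beta}(q_r)|=|r|\,q_r$, than from a joint-continuity argument.
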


Up to the side-length normalisation $\ell(Q)^{r}=\Lambda(Q)^{r/d}$,
with $\Lambda$ the Lebesgue measure, $\mathcal{J}_{r}$ is the set
function $\mathfrak{J}_{\nu,r}$ of \cite{KNneg}, $\Theta_{r}$ is
its partition function in the sense of \cite[(1.6)]{KN25}, and $q_{r}$
is the critical value in the quantization formulas of \cite{KNZ,KNneg}.
\begin{proof}
Choose $a_{0}$ with $\max\{0,-r\}<a_{0}<\dim_{\infty}(\nu)$, so
that $\nu(Q)\leq C_{0}2^{-a_{0}k}$ for $Q\in\D_{k}$ and some $C_{0}\geq1$.
Then $\mathcal{J}_{r}(Q)\leq C_{0}2^{-(a_{0}+r)k}$ for $Q\in\D_{k}$,
and on the cubes of positive mass $\mathcal{J}_{r}$ is monotone,
uniformly vanishing and locally non-vanishing with $\infty$-dimension
at least $a_{0}+r>0$ in the sense of \cite{KN25}. By \cite[Lemma~2.4]{KN25},
$q_{r}$ therefore coincides with the critical exponent
\[
\kappa_{r}\coloneqq\inf\Bigl\{ q>0:\sum_{Q\in\D}\mathcal{J}_{r}(Q)^{q}<\infty\Bigr\}.
\]
Write $S_{k}(q)\coloneqq\sum_{Q\in\D_{k}}\nu(Q)^{q}=2^{k\beta_{k}(q)}$.
Since $Q$ itself is admissible in the supremum, and since every $Q'\in\D_{k}$
lies below exactly one cube of each generation $n\leq k$, so that
$\mathcal{J}_{r}(Q)^{q}\leq\sum_{Q'\subseteq Q}\nu(Q')^{q}\ell(Q')^{rq}$
for $q>0$,
\[
\sum_{k\geq0}2^{-rkq}S_{k}(q)\leq\sum_{Q\in\D}\mathcal{J}_{r}(Q)^{q}\leq\sum_{k\geq0}(k+1)2^{-rkq}S_{k}(q).
\]
The $k$th terms of the outer series are $2^{k(\beta_{k}(q)-rq)}$
and $(k+1)2^{k(\beta_{k}(q)-rq)}$, so by the root test both series
converge when $f_{r}(q)\coloneqq\overline{\beta}(q)-rq<0$ and both
diverge when $f_{r}(q)>0$. Hence $\kappa_{r}=\inf\{q>0:f_{r}(q)<0\}$.
For $q_{2}>q_{1}>0$ the mass bound gives $\nu(Q)^{q_{2}}\leq(C_{0}2^{-a_{0}k})^{q_{2}-q_{1}}\nu(Q)^{q_{1}}$,
hence $\overline{\beta}(q_{2})-\overline{\beta}(q_{1})\leq-a_{0}(q_{2}-q_{1})$,
so $f_{r}$ is continuous and strictly decreasing with $f_{r}(q)\to-\infty$;
and $f_{r}(q)\geq a_{0}(1-q)-rq>0$ for small $q$, since $\overline{\beta}(q)\geq a_{0}(1-q)$
for $q<1$. Hence $\kappa_{r}$ is the unique zero of $f_{r}$, which
is \eqref{eq:critical-root}. Since $f_{r}(1)=-r$, we get $q_{0}=1$
and $q_{r}\gtrless1$ for $r\lessgtr0$, and the slope bound gives
$a_{0}|q_{r}-1|\leq|\overline{\beta}(q_{r})|=|r|q_{r}$, hence $q_{r}\to1$
as $r\to0$.
\end{proof}

\begin{theorem}[Spectral enclosure and neighbouring-order limits]\label{thm:spectral-enclosure}
Assume \eqref{eq:uniform-dyadic-mass}, and let $\sigma_{\pm}$ and
$J_{\nu}$ be as in \eqref{eq:reflected-subdifferential}. Then $0<a\leq\sigma_{-}\leq\sigma_{+}\leq d$.
The dyadic local dimensions \eqref{eq:local-exponents} satisfy 
\begin{equation}
\sigma_{-}\leq\underline{s}(x)\leq\overline{s}(x)\leq\sigma_{+}\label{eq:spectral-local-enclosure}
\end{equation}
for $\nu$-almost every $x$. In particular, all four Hausdorff and
packing dimensions of $\nu$ belong to $J_{\nu}$, and 
\begin{align}
\sigma_{-} & \leq\dim_{*}(\nu)\leq\underline{h}(\nu)\leq\overline{h}(\nu)\leq\mathrm{Dim}^{*}(\nu)\leq\sigma_{+},\label{eq:full-entropy-enclosure}\\
\sigma_{-} & \leq\dim_{*}(\nu)\leq\underline{D}_{0}(\nu)\leq\overline{D}_{0}(\nu)\leq\mathrm{Dim}^{*}(\nu)\leq\sigma_{+}.\label{eq:full-quantization-enclosure}
\end{align}
Moreover,
\begin{equation}
\lim_{r\uparrow0}\underline{D}_{r}(\nu)=\sigma_{-},\qquad\lim_{r\downarrow0}\overline{D}_{r}(\nu)=\sigma_{+}.\label{eq:neighbor-limits}
\end{equation}

\end{theorem}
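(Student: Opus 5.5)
The bounds $0<a\leq\sigma_{-}$ come from Lemma~\ref{lem:stopping-size}(2). For $\sigma_{+}\leq d$ I use that $\overline{\beta}(q)\leq d(1-q)$ for $0<q<1$: by Hölder, $\sum_{Q\in\D_{k}}\nu(Q)^{q}\leq(2^{dk})^{1-q}$. Dividing by $1-q>0$ and letting $q\uparrow1$ gives $\sigma_{+}\leq d$.

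For the local enclosure \eqref{eq:spectral-local-enclosure} I use Markov/Borel--Cantelli via \eqref{eq:markov-window}.

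\emph{Lower bound.} Fix $\theta>0$ and $c<-\overline{\beta}(1+\theta)/\theta$. Then $\nu\{X_{k}\leq c\}\leq2^{k(\beta_{k}(1+\theta)+\theta c)}$. For large $k$ the exponent is at most $-\eta k$ for some $\eta>0$, so the series is summable. Hence $\underline{s}\geq c$ almost everywhere. Letting $c\uparrow-\overline{\beta}(1+\theta)/\theta$ and then $\theta\downarrow0$, with convexity so that this quotient increases to $\sigma_{-}$, gives $\underline{s}\geq\sigma_{-}$ almost everywhere (countably many rational parameters).

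\emph{Upper bound.} Symmetrically, $\nu\{X_{k}\geq c\}\leq2^{k(\beta_{k}(1-\theta)-\theta c)}$ for $c>\overline{\beta}(1-\theta)/\theta$. This gives $\overline{s}\leq\overline{\beta}(1-\theta)/\theta$, which decreases to $\sigma_{+}$.

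Remark~\ref{rem:heurteaux-dimensions} then places $\dim_{*}$, $\dim^{*}$, $\mathrm{Dim}_{*}$ and $\mathrm{Dim}^{*}$ in $J_{\nu}$. The chains \eqref{eq:full-entropy-enclosure} and \eqref{eq:full-quantization-enclosure} follow from the integral bounds of Proposition~\ref{prop:local-liminf-bounds}, since $\operatorname{ess\,inf}\underline{s}\leq\int\underline{s}$ and $\int\overline{s}\leq\operatorname{ess\,sup}\overline{s}$, together with the harmonic version already written out in Remark~\ref{rem:heurteaux-dimensions}.

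For \eqref{eq:neighbor-limits} I invoke \cite[Theorem~1.5(i)--(ii)]{KNneg}, which expresses $\underline{D}_{r}$ and $\overline{D}_{r}$ for $r\neq0$ through the critical value $q_{r}$ of $\Theta_{r}$. For $r>0$ this is the classical formula $\overline{D}_{r}=rq_{r}/(1-q_{r})$. For $-\dim_{\infty}<r<0$ the relevant formula should be of the form $\underline{D}_{r}=rq_{r}/(1-q_{r})$ or a one-sided bound by it; I expect to need an upper/lower pair and a sandwich with \eqref{eq:full-quantization-enclosure} style monotonicity. Lemma~\ref{lem:critical-zero} gives $\overline{\beta}(q_{r})=rq_{r}$, so
\[
\frac{rq_{r}}{1-q_{r}}=\frac{\overline{\beta}(q_{r})}{1-q_{r}}=-\frac{\overline{\beta}(q_{r})-\overline{\beta}(1)}{q_{r}-1}.
\]
As $r\downarrow0$ we have $q_{r}\uparrow1$, so this difference quotient converges to $-\overline{\beta}'_{-}(1)=\sigma_{+}$. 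As $r\uparrow0$ we have $q_{r}\downarrow1$, and it converges to $-\overline{\beta}'_{+}(1)=\sigma_{-}$. Here the monotone convergence of the difference quotients of a convex function is used, and $q_{r}\to1$ with the correct side comes from Lemma~\ref{lem:critical-zero}.

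The main obstacle is the last step. I would have to match exactly what \cite{KNneg} asserts for negative $r$, namely whether it gives $\underline{D}_{r}$ as the critical quotient or only a bound. If only an inequality is available, I would try to close the gap from the other side using the order-zero chain $\sigma_{-}\leq\underline{D}_{0}$ and monotonicity of $r\mapsto\underline{D}_{r}$ (for $r<0$, $\underline{D}_{r}\leq\underline{D}_{0}$ is not the needed direction). So I would rely on the equality in \cite{KNneg} under $\dim_{\infty}>0$, which the standing hypothesis provides.
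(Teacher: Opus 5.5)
Your proposal is correct and follows the paper's proof. It uses the same Markov--Borel--Cantelli argument for \eqref{eq:spectral-local-enclosure}, the same elementary bounds for $a\leq\sigma_{-}$ and $\sigma_{+}\leq d$, and the same combination of Lemma~\ref{lem:critical-zero} with \cite[Theorem~1.5(i)--(ii)]{KNneg}. That citation does supply the equality $\underline{D}_{r}(\nu)=\overline{\beta}(q_{r})/(1-q_{r})$ for $-\dim_{\infty}(\nu)<r<0$, so the obstacle you anticipate does not arise.

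The only deviation concerns the chains \eqref{eq:full-entropy-enclosure}--\eqref{eq:full-quantization-enclosure}. You take their inner bounds from Proposition~\ref{prop:local-liminf-bounds} and their outer bounds from the local enclosure, which is the route of Remark~\ref{rem:heurteaux-dimensions}. The paper instead cites \cite[Theorem~3.1]{Heu07} and \cite[Theorem~2.1]{Zhu12} and reciprocates the depth bounds \eqref{eq:depth-sigma-bounds}. Both routes are valid and neither is circular.
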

\begin{proof}
For $X_{k}(x)=-\log\nu(Q_{k}(x))/k$, $0<\theta<1$, and $c\in\R$,
Markov's inequality gives 
\begin{align*}
\nu\{X_{k}\leq c\} & \leq2^{k(\beta_{k}(1+\theta)+\theta c)},\\
\nu\{X_{k}\geq c\} & \leq2^{k(\beta_{k}(1-\theta)-\theta c)}.
\end{align*}
Since $\overline{\beta}(1+\theta)=-\theta\sigma_{-}+o(\theta)$ and
$\overline{\beta}(1-\theta)=\theta\sigma_{+}+o(\theta)$, the limsup
of the respective exponential rates is negative for $c=\sigma_{-}-\varepsilon$
and $c=\sigma_{+}+\varepsilon$, if $\theta$ is sufficiently small.
Borel--Cantelli, followed by rational $\varepsilon\downarrow0$,
proves \eqref{eq:spectral-local-enclosure}. This is the probabilistic
argument of \cite[Theorem~3.2]{Heu07}; only a limsup is used. The
elementary bounds for $\sum\nu(Q)^{q}$ using $\max\nu(Q)\leq C2^{-ak}$
and $\#\D_{k}=2^{dk}$ imply $a\leq\sigma_{-}\leq\sigma_{+}\leq d$.
The local characterisations in \cite[Theorems~2.3 and~2.5]{Heu07},
and the dyadic--ball comparison in Lemma~\ref{lem:balls-restrictions},
place all four classical dimensions in $J_{\nu}$. The entropy bounds
are \cite[Theorem~3.1]{Heu07}. The outer bounds of the quantization
chain in \eqref{eq:full-quantization-enclosure} follow without any
appeal to local dimensions: reciprocating the depth bounds \eqref{eq:depth-sigma-bounds}
of Lemma~\ref{lem:stopping-size} in Theorem~\ref{thm:adaptive-entropy}
gives $\sigma_{-}\leq\underline{D}_{0}(\nu)$ and $\overline{D}_{0}(\nu)\leq\sigma_{+}$.
The inner bounds $\dim_{*}(\nu)\leq\underline{D}_{0}(\nu)$ and $\overline{D}_{0}(\nu)\leq\mathrm{Dim}^{*}(\nu)$
are Zhu's \cite[Theorem~2.1, (2.6)]{Zhu12}, whose uniform ball-mass
hypothesis follows from compact support and \eqref{eq:uniform-dyadic-mass};
Remark~\ref{rem:heurteaux-dimensions} recovers them from the stopping-time
integral bounds. This proves \eqref{eq:full-entropy-enclosure}--\eqref{eq:full-quantization-enclosure}.
For \eqref{eq:neighbor-limits}, Lemma~\ref{lem:critical-zero} gives
$\overline{\beta}(q_{r})=rq_{r}$, with $q_{r}>1$ for $r<0$ and
$q_{r}<1$ for $r>0$. The same lemma implies $q_{r}\to1$ as $r\to0$.
The formulas of \cite[Theorem~1.5(i)--(ii)]{KNneg} therefore give
\[
\underline{D}_{r}(\nu)=\frac{\overline{\beta}(q_{r})}{1-q_{r}}\quad(-\dim_{\infty}(\nu)<r<0),\qquad\overline{D}_{r}(\nu)=\frac{\overline{\beta}(q_{r})}{1-q_{r}}\quad(r>0).
\]
The one-sided secants tend to $\sigma_{-}$ and $\sigma_{+}$, respectively.
Finally, power-mean monotonicity yields 
\[
\mathfrak{e}_{N,r_{-}}(\nu)\leq\mathfrak{e}_{N,0}(\nu)\leq\mathfrak{e}_{N,r_{+}}(\nu),\qquad r_{-}<0<r_{+},
\]
within the range of positive finite errors. Consequently 
\[
\underline{D}_{r_{-}}(\nu)\leq\underline{D}_{0}(\nu),\qquad\overline{D}_{0}(\nu)\leq\overline{D}_{r_{+}}(\nu),
\]
which also recovers the outer spectral bounds $\sigma_{-}\leq\underline{D}_{0}(\nu)\leq\overline{D}_{0}(\nu)\leq\sigma_{+}$
directly from \eqref{eq:neighbor-limits}.
\end{proof}

\begin{remark}[Scope of the spectral interval]\label{rem:spectral-interval-scope}
The interval $J_{\nu}$ contains the limiting entropy, order-zero
quantization and classical measure dimensions, and the almost-everywhere
local exponents. It sits inside the outer bounds
\[
\dim_{\infty}(\nu)\leq\sigma_{-},\qquad\sigma_{+}\leq\overline{\beta}(0)\leq d,
\]
the first by admitting every $a<\dim_{\infty}(\nu)$ in \eqref{eq:uniform-dyadic-mass},
the second by the secant between $0$ and $1$. For a biased binary
Bernoulli measure both are strict while $J_{\nu}$ is a singleton.\end{remark}

\section{Local convergence and equality cases}

\label{sec:convergence}

We now impose convergence of the local information rates $X_{k}$
under $\nu$, in distribution or almost everywhere. The main statement
is Proposition~\ref{prop:spectral-window}; the almost-everywhere
case, which yields the arithmetic and harmonic means of the local
dimension, is recorded in Remark~\ref{cor:local-harmonic}, and Theorem~\ref{thm:quantitative-gap}
quantifies the comparison between the two means.

\begin{proposition}[The rescaled spectra at one and convergence in distribution]\label{prop:spectral-window}Assume
\eqref{eq:uniform-dyadic-mass}. For $k\geq1$ and $\theta\in\mathbb{R}$
put
\begin{equation}
\Psi_{k}(\theta)\coloneqq k\ln2\;\beta_{k}\Bigl(1+\frac{\theta}{k\ln2}\Bigr)=\ln\int\e^{-\theta X_{k}}\d\nu,\label{eq:window-definition}
\end{equation}
the cumulant generating function of $-X_{k}$ under $\nu$.\begin{enumerate}\item
$\Psi_{k}$ is finite and convex, and for $n\geq1$
\begin{equation}
\Psi^{(n)}_{k}(0)=\frac{\beta^{(n)}_{k}(1)}{(k\ln2)^{n-1}}=(-1)^{n}\kappa_{n}(X_{k}),\label{eq:window-cumulants}
\end{equation}
where $\kappa_{n}(X_{k})$ is the $n$-th cumulant of $X_{k}$ under
$\nu$. In particular $\Psi_{k}'(0)=-h_{k}(\nu)$ and $\Psi_{k}''(0)=\operatorname{Var}_{\nu}(X_{k})=\beta_{k}''(1)/(k\ln2)$.\item
The laws of $X_{k}$ under $\nu$ converge weakly as $k\to\infty$
if and only if $\Psi_{k}(\theta)$ converges in $\mathbb{R}$ for
every $\theta\in\mathbb{R}$.\item If the laws of $X_{k}$ converge
weakly to a probability measure $\varrho$, then $\varrho$ is carried
by the reflected subdifferential $J_{\nu}=[\sigma_{-},\sigma_{+}]$
of \eqref{eq:reflected-subdifferential}, $\Psi(\theta)\coloneqq\lim_{k}\Psi_{k}(\theta)=\ln\int\e^{-\theta y}\d\varrho(y)$,
all cumulants of $X_{k}$ converge to those of $\varrho$, and both
dimensions exist:
\begin{equation}
h(\nu)=\int y\d\varrho(y)=-\Psi'(0),\qquad D_{0}(\nu)=\Bigl(\int\frac{1}{y}\d\varrho(y)\Bigr)^{-1}=\Bigl(\int^{\infty}_{0}e^{\Psi(\theta)}\d\theta\Bigr)^{-1};\label{eq:window-harmonic}
\end{equation}
moreover $\beta_{k}''(1)/(k\ln2)\to\operatorname{Var}(\varrho)=\Psi''(0)$.\end{enumerate}\end{proposition}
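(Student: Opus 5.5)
For (1), start from the identity $\sum_{Q\in\D_k}\nu(Q)^{1+\lambda}=\int\nu(Q_k(x))^{\lambda}\d\nu=\int2^{-\lambda kX_k}\d\nu$. Taking $\lambda=\theta/(k\ln2)$ gives $2^{-\lambda kX_k}=\e^{-\theta X_k}$, so
\[
k\ln2\,\beta_k\bigl(1+\theta/(k\ln2)\bigr)=\ln\int\e^{-\theta X_k}\d\nu=\Psi_k(\theta).
\]
This is finite for every $\theta$ by Lemma~\ref{lem:information-UI}; for fixed $k$ it is in any case a finite sum. Convexity is Hölder's inequality. Differentiating $n$ times at $\theta=0$, the chain rule produces the factor $(k\ln2)^{-n}$, and multiplying by $k\ln2$ gives $\Psi_k^{(n)}(0)=\beta_k^{(n)}(1)/(k\ln2)^{n-1}$. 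The derivatives of a cumulant generating function of $-X_k$ at $0$ are $\kappa_n(-X_k)=(-1)^n\kappa_n(X_k)$. The cases $n=1,2$ give $-h_k$ and the variance.

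For (2), suppose first that the laws converge weakly. The family $\e^{-\theta X_k}$ is uniformly integrable by Lemma~\ref{lem:information-UI}, so $\int\e^{-\theta X_k}\d\nu$ converges to $\int\e^{-\theta y}\d\varrho$. This limit is positive, so its logarithm converges. Conversely, suppose $\Psi_k(\theta)$ converges for all $\theta$. Tightness (Lemma~\ref{lem:information-UI}) gives subsequential limits $\varrho'$, and uniform integrability identifies $\int\e^{-\theta y}\d\varrho'=\e^{\Psi(\theta)}$ for all $\theta$. Each $\varrho'$ is supported in $[0,\infty)$ and a bilateral Laplace transform finite on all of $\R$ determines the measure, so every subsequential limit is the same and the full sequence converges.

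For (3), the support claim follows from the Markov bounds \eqref{eq:markov-window}. They give $\nu\{X_k\le\sigma_--\varepsilon\}\to0$ and $\nu\{X_k\ge\sigma_++\varepsilon\}\to0$ as in the proof of Theorem~\ref{thm:spectral-enclosure}, and the portmanteau theorem then gives $\varrho(J_\nu)=1$. Uniform integrability of all powers $X_k^n$ yields convergence of all moments, hence of all cumulants. In particular $h_k\to\int y\d\varrho$, so $h(\nu)$ exists, and $\Psi_k''(0)\to\operatorname{Var}(\varrho)=\Psi''(0)$. Derivatives of the limiting analytic function are obtained by differentiating under the integral.

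The harmonic formula is the main step. I would deduce it from Proposition~\ref{prop:threshold-variance}(4) only if that part were independent of this one, but it refers back here. So I argue directly through $\varsigma_t$. For a continuity point $y>0$ set $k=\lceil\log(1/t)/y\rceil-1$; then
\[
\nu\{\varsigma_t\le y\}=\nu\{X_k\le\log(1/t)/k\}.
\]
Since $\log(1/t)/k\to y$, squeezing between nearby continuity points $y\pm\delta$ shows that this tends to $\varrho((-\infty,y])$, so $\varsigma_t\Rightarrow\varrho$. Now $1/\varsigma_t\le1/a_t$ is uniformly bounded, and $\varrho$ lives on $[\sigma_-,\sigma_+]\subset[a,d]$, where $1/y$ is bounded and continuous. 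Hence
\[
\frac{\mathcal L_\nu(t)}{\log(1/t)}=\int\varsigma_t^{-1}\d\nu\to\int y^{-1}\d\varrho .
\]
Theorem~\ref{thm:adaptive-entropy} then gives existence of $D_0(\nu)$ and its value. Finally, $\int_0^\infty\e^{\Psi(\theta)}\d\theta=\int\int_0^\infty\e^{-\theta y}\d\theta\d\varrho(y)=\int y^{-1}\d\varrho$ by Tonelli, using $y\ge\sigma_->0$.

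The delicate point is the squeeze. It needs the monotonicity of $c\mapsto\nu\{X_k\le c\}$, and it needs $y$ to avoid atoms of $\varrho$. The case $y\le0$ is trivial.
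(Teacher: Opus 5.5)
Your proof is correct. Parts (1) and (2), the support claim and the convergence of moments and cumulants in (3) follow the paper's argument essentially verbatim. The difference is in the harmonic formula.

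The paper uses the weighted-count identity \eqref{eq:weighted-count} to write
\[
\frac{\mathcal{L}_\nu(t)}{\ell_t}=\frac{1}{\ell_t}+\int_0^\infty W_{\lceil\ell_t z\rceil}\bigl(\ell_t/\lceil\ell_t z\rceil\bigr)\d z,\qquad W_k(c)=\nu\{X_k\le c\}.
\]
It applies the continuity-point squeeze for each fixed $z$. It then concludes by dominated convergence, since the integrand vanishes for $z>1/a_0$ because $X_k\ge a-(\log C)/k$.

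You instead prove the stronger fact that the threshold estimates $\varsigma_t$ converge in law to $\varrho$, and then apply bounded convergence to $1/\varsigma_t\le 1/a_t$. The two computations are dual through the layer-cake formula $\int\varsigma_t^{-1}\d\nu=\int_0^\infty\nu\{\varsigma_t<1/z\}\d z$, and the same squeeze does the work in both. Your argument is in fact the one the paper gives in Proposition~\ref{prop:threshold-variance}(4), where it is recorded as a second proof of the harmonic formula. You correctly avoid circularity by carrying out the squeeze yourself rather than citing that part.

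What each route buys:
\begin{itemize}
\item Yours gives more at once. Combined with the moment bounds of Proposition~\ref{prop:threshold-variance}(1), it also yields convergence of the arithmetic threshold means and of the threshold variances.
\item The paper's stays at the level of the generation laws $W_k$ and never introduces the random variable $\varsigma_t$.
\end{itemize}

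One detail you should make explicit: $y\mapsto 1/y$ is not a bounded continuous function on $\R$. However, $\varsigma_t\ge a_t\ge a/2$ for small $t$, and $\varrho$ is carried by $[a,d]$. So you may replace $1/y$ by $y\mapsto 1/\max\{y,a/2\}$ before invoking weak convergence.
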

\begin{proof}
For (1), $\sum_{Q\in\D_{k}}\nu(Q)^{q}=\int\nu(Q_{k}(x))^{q-1}\d\nu(x)=\int\e^{-(q-1)k\ln2\,X_{k}}\d\nu$
gives $\beta_{k}(q)=(k\ln2)^{-1}\ln\int e^{-(q-1)k\ln2\,X_{k}}\d\nu$,
and \eqref{eq:window-definition} follows by substituting $q=1+\theta/(k\ln2)$.
The integral is a finite sum over the cubes of positive mass, so $\Psi_{k}$
is finite and smooth; it is convex by Hölder's inequality, and its
derivatives at $0$ are the cumulants of $-X_{k}$. The chain rule
gives the first equality in \eqref{eq:window-cumulants}; for $n=1$
this is \eqref{eq:beta-derivative}.

By Lemma~\ref{lem:information-UI} the laws of the $X_{k}$ are tight
and, for every $\theta$ and $n$, the families $(e^{-\theta X_{k}})_{k}$
and $(X^{n}_{k})_{k}$ are uniformly integrable. If the laws of $X_{k}$
converge weakly to $\varrho$, the portmanteau theorem shows that
$\varrho$ is carried by $[\sigma_{-},\sigma_{+}]$: for $c<\sigma_{-}$
there is $\theta\in(0,1)$ with $\overline{\beta}(1+\theta)+\theta c<0$,
so $\nu\{X_{k}<c\}\to0$ by \eqref{eq:markov-window} and hence $\varrho((-\infty,c))\leq\liminf_{k}\nu\{X_{k}<c\}=0$;
likewise $\varrho((c,\infty))=0$ for $c>\sigma_{+}$. Uniform integrability
gives $\int\e^{-\theta X_{k}}\d\nu\to\int\e^{-\theta y}\d\varrho(y)$
and $\int X^{n}_{k}\d\nu\to\int y^{n}\d\varrho(y)$ for all $\theta$
and $n$. This proves the direct implication in (2), the formula for
$\Psi$ and the convergence of the cumulants in (3), and, by (1),
the statements $h_{k}(\nu)\to\int y\d\varrho$ and $\beta_{k}''(1)/(k\ln2)\to\operatorname{Var}(\varrho)$.
Conversely, if $\Psi_{k}(\theta)\to\Psi(\theta)$ for all $\theta$,
every subsequence of the laws of $X_{k}$ has, by tightness, a further
weakly convergent subsequence with some limit $\varrho'$, and by
what has just been shown $\int\e^{-\theta y}\d\varrho'(y)=\e^{\Psi(\theta)}$
for all $\theta>0$. A finite measure on a half-line is determined
by its Laplace transform, so all subsequential limits coincide, and
the laws of $X_{k}$ converge weakly. This proves (2).

It remains to prove the harmonic formula in (3). Put $W_{k}(c)\coloneqq\nu\{X_{k}\leq c\}$
for $k\geq1$ and $W(c)\coloneqq\varrho((-\infty,c])$. By \eqref{eq:weighted-count}
and its proof, $\mathcal{L}_{\nu}(t)=\sum_{k\geq0}\nu\{I_{k}\leq\ell_{t}\}=1+\sum_{k\geq1}W_{k}(\ell_{t}/k)$,
and writing the sum as an integral over $y\in(k-1,k]$ and substituting
$y=\ell_{t}z$,
\[
\frac{\mathcal{L}_{\nu}(t)}{\ell_{t}}=\frac{1}{\ell_{t}}+\int^{\infty}_{0}W_{\lceil\ell_{t}z\rceil}\Bigl(\frac{\ell_{t}}{\lceil\ell_{t}z\rceil}\Bigr)\d z.
\]
Fix $z>0$ such that $1/z$ is a continuity point of $W$. As $t\downarrow0$,
$k=\lceil\ell_{t}z\rceil\to\infty$ and $\ell_{t}/k\to1/z$, so for
continuity points $c'<1/z<c''$ of $W$ eventually $W_{k}(c')\leq W_{k}(\ell_{t}/k)\leq W_{k}(c'')$,
and weak convergence gives $W(c')\leq\liminf\leq\limsup\leq W(c'')$;
letting $c'\uparrow1/z$ and $c''\downarrow1/z$ shows that the integrand
converges to $W(1/z)$ for all but countably many $z$. Moreover,
fix $a_{0}\in(0,a)$ and $k_{1}$ with $a-(\log C)/k\geq a_{0}$ for
$k\geq k_{1}$; for $z>1/a_{0}$ and $\ell_{t}\geq k_{1}a_{0}$ we
have $\lceil\ell_{t}z\rceil\geq k_{1}$ and $\ell_{t}/\lceil\ell_{t}z\rceil<a_{0}$,
so the integrand vanishes there. Dominated convergence yields
\[
\lim_{t\downarrow0}\frac{\mathcal{L}_{\nu}(t)}{\log(1/t)}=\int^{\infty}_{0}W(1/z)\d z=\int\int^{\infty}_{0}\mathbf{1}_{\{z\leq1/y\}}\d z\d\varrho(y)=\int\frac{1}{y}\d\varrho(y)\in\Bigl[\frac{1}{\sigma_{+}},\frac{1}{\sigma_{-}}\Bigr],
\]
and Theorem~\ref{thm:adaptive-entropy} gives the first expression
for $D_{0}(\nu)$ in \eqref{eq:window-harmonic}. The second follows
from $1/y=\int^{\infty}_{0}e^{-\theta y}\d\theta$ and Tonelli's theorem.
\end{proof}

\begin{remark}[Almost-everywhere convergence]\label{cor:local-harmonic}If
the dyadic local dimension
\[
s(x)\coloneqq\lim_{k\to\infty}X_{k}(x)
\]
exists for $\nu$-almost every $x$, then the laws of $X_{k}$ converge
weakly to $\varrho=s_{*}\nu$, and Proposition~\ref{prop:spectral-window}(3)
gives
\begin{equation}
h(\nu)=\int s(x)\d\nu(x),\qquad D_{0}(\nu)=\Bigl(\int\frac{1}{s(x)}\d\nu(x)\Bigr)^{-1},\label{eq:local-arithmetic-formula}
\end{equation}
the classical arithmetic and harmonic means of the local dimension,
together with $\beta_{k}''(1)/(k\ln2)\to\operatorname{Var}_{\nu}(s)$.
Strict convexity of $z\mapsto1/z$ gives $D_{0}(\nu)\leq h(\nu)$,
with equality precisely when $s$ is almost everywhere constant. A
constant local dimension $s>0$ yields
\[
h(\nu)=D_{0}(\nu)=s.
\]
No decomposition into separated components is involved. Pointwise,
$I_{k}(x)/k\to s(x)$ is equivalent by \eqref{eq:first-crossing}
to
\begin{equation}
\varsigma_{t}(x)\longrightarrow s(x),\label{eq:inverse-local-growth}
\end{equation}
so that the harmonic formula is also the dominated-convergence limit
of Theorem~\ref{thm:adaptive-entropy} applied to $1/\varsigma_{t}=\tau_{t}/\log(1/t)$,
which is bounded by \eqref{eq:max-stopping-depth}. The arithmetic
formula follows from uniform integrability of the information rates;
see Remark~\ref{rem:attribution-local} for the attribution to \cite{FLR02,ST12}.
The harmonic formula can alternatively be assembled from \cite[Example~4.1]{GL04}
and \cite[Lemma~2.2 and Theorem~2.1]{Zhu12}, see Remark~\ref{prop:recovery-zhu};
we have not found it stated in this generality.\end{remark}
\begin{example}[Convergence in law without almost-everywhere convergence]
\label{ex:law-not-ae} Fix $0<p<\tfrac{1}{2}$ and put $s\coloneqq h_{2}(p)=-p\log p-(1-p)\log(1-p)\in(0,1)$.
Let $n_{i}\coloneqq i!$, so that $n_{i+1}/n_{i}\to\infty$, and $c_{i}\coloneqq\lceil\log i\rceil\vee1$,
so that $\varepsilon_{i}\coloneqq2^{-c_{i}}$ satisfies $\varepsilon_{i}\to0$
and $\sum_{i}\varepsilon_{i}=\infty$. Let $\nu$ be the law on $[0,1]$
of a random binary expansion whose digits in the $i$-th epoch $[n_{i},n_{i+1})$
are generated as follows, independently across epochs: for $i\geq3$
the first $c_{i}$ digits are fair (all digits of the first two epochs
are fair), and the epoch is called \emph{exceptional} if they all
vanish, an event of probability $\varepsilon_{i}$; the remaining
digits of the epoch are independent Bernoulli$(p)$ digits if the
epoch is not exceptional and fair digits if it is. Every conditional
digit probability is at most $1-p$, so \eqref{eq:uniform-dyadic-mass}
holds with $a=-\log(1-p)$ and $C=1$.
\end{example}

Write $\rho_{i}\coloneqq1$ if epoch $i$ is exceptional and $\rho_{i}\coloneqq s$
otherwise. Conditional on all the initial coin words, the remaining
information increments are independent and uniformly bounded, with
mean $\rho_{i}$ in epoch $i$. The coin-word information increments
themselves equal one. Thus the conditional strong law and $\sum_{l\leq i}c_{l}+n_{i-1}=o(n_{i})$
give, $\nu$-almost surely,
\begin{equation}
X_{k}=\frac{n_{i}}{k}\,\rho_{i-1}+\frac{k-n_{i}}{k}\,\rho_{i}+o(1)\qquad(n_{i}\leq k<n_{i+1}).\label{eq:regime-average}
\end{equation}
Since $\nu\{\rho_{i}=1\text{ or }\rho_{i-1}=1\}\leq\varepsilon_{i}+\varepsilon_{i-1}\to0$,
\eqref{eq:regime-average} gives $X_{k}\to s$ in $\nu$-probability:
the laws of $X_{k}$ converge weakly to $\varrho=\delta_{s}$. On
the other hand the events $\{\rho_{i}=1\}$ are independent with $\sum_{i}\varepsilon_{i}=\infty$,
so by the second Borel--Cantelli lemma almost every $x$ lies in
infinitely many exceptional and infinitely many non-exceptional epochs;
evaluating \eqref{eq:regime-average} at $k=n_{i+1}-1$, where $n_{i}/k\to0$,
yields $\underline{s}(x)=s<1=\overline{s}(x)$ for $\nu$-almost every
$x$. The dyadic local dimension therefore exists almost nowhere,
while Proposition~\ref{prop:spectral-window} applies and gives $h(\nu)=D_{0}(\nu)=s$
and $\beta_{k}''(1)=o(k)$. By \eqref{eq:measure-dimensions-local},
$\dim_{*}(\nu)=\dim^{*}(\nu)=s$ and $\mathrm{Dim}_{*}(\nu)=\mathrm{Dim}^{*}(\nu)=1$,
so the integral bounds \eqref{eq:local-harmonic-bounds} only locate
$D_{0}(\nu)$ in $[s,1]$; the spectral window pins it to the left
endpoint.

The spectrum is explicit. Summing $\nu(Q)^{q}$ over the digit strings
of one complete epoch gives a factor $(2^{c_{l}}-1)2^{-c_{l}q}A(q)^{n'_{l}}+2^{-c_{l}q}2^{(1-q)n'_{l}}$
with $A(q)\coloneqq p^{q}+(1-p)^{q}$ and $n'_{l}\coloneqq n_{l+1}-n_{l}-c_{l}$;
by strict convexity for orders above one and strict concavity for
orders between zero and one, $A(q)>2^{1-q}$ for $q>1$ and $A(q)<2^{1-q}$
for $q<1$, so the larger of the two terms determines the exponential
rate and the coin digits contribute $O(\sum_{l\leq i}c_{l})=o(k)$.
Hence the limit exists and
\begin{equation}
\beta(q)=\max\bigl\{1-q,\ \log\bigl(p^{q}+(1-p)^{q}\bigr)\bigr\}\qquad(q>0),\label{eq:law-not-ae-spectrum}
\end{equation}
with a corner at $q=1$: $\sigma_{-}=h_{2}(p)=s$ and $\sigma_{+}=1$.
Thus $\varrho=\delta_{\sigma_{-}}$, the two means sit at the left
endpoint of $J_{\nu}=[s,1]$, and the packing dimensions at the right
one.

The same construction with a summable sequence, for instance $c_{i}\coloneqq\lceil2\log i\rceil\vee1$
and hence $\varepsilon_{i}\leq i^{-2}$, changes only the last step:
by the first Borel--Cantelli lemma almost every $x$ lies in finitely
many exceptional epochs, so \eqref{eq:regime-average} gives $X_{k}\to s$
almost everywhere and the local dimension is the constant $s$. The
spectrum \eqref{eq:law-not-ae-spectrum} is unchanged, since only
$\sum_{l\leq i}c_{l}=o(n_{i})$ entered its computation. Thus $h(\nu)=D_{0}(\nu)=s$
and all four classical dimensions equal $s$, while $\overline{\beta}$
has a first-order phase transition at $q=1$ with $J_{\nu}=[s,1]$:
equality of the two means does not force differentiability of the
spectrum at $1$, even for an almost everywhere constant local dimension,
and $\beta_{k}''(1)=o(k)$ although the limiting corner does not close.
The two variants differ only in whether the exceptional epochs recur.

\begin{theorem}[Sharp comparison, defect, and spectral curvature]\label{thm:quantitative-gap}
Assume \eqref{eq:uniform-dyadic-mass} and that the laws of $X_{k}$
under $\nu$ converge weakly to $\varrho$, as in Proposition~\ref{prop:spectral-window}.
Let $m_{\varrho}\coloneqq\min\operatorname{supp}\varrho$ and $M_{\varrho}\coloneqq\max\operatorname{supp}\varrho$,
and write $h=h(\nu)$ and $D=D_{0}(\nu)$. Then $0<\sigma_{-}\leq m_{\varrho}\leq M_{\varrho}\leq\sigma_{+}$
and
\begin{equation}
\frac{m_{\varrho}M_{\varrho}}{m_{\varrho}+M_{\varrho}-h}\leq D\leq h,\qquad0\leq h-D\leq(\sqrt{M_{\varrho}}-\sqrt{m_{\varrho}})^{2}.\label{eq:sharp-gap-bounds}
\end{equation}
The defect has the exact representations
\begin{equation}
h-D=\int\frac{(y-D)^{2}}{y}\d\varrho(y)=\frac{D}{h}\int\frac{(y-h)^{2}}{y}\d\varrho(y).\label{eq:exact-gap-identity}
\end{equation}
In particular,
\begin{equation}
\frac{D}{hM_{\varrho}}\operatorname{Var}(\varrho)\leq h-D\leq\frac{D}{hm_{\varrho}}\operatorname{Var}(\varrho),\qquad\lim_{k\to\infty}\frac{\beta_{k}''(1)}{k\ln2}=\operatorname{Var}(\varrho).\label{eq:gap-curvature}
\end{equation}
Hence $h=D$ if and only if $\varrho=\delta_{h}$, equivalently $\beta_{k}''(1)=o(k)$.
If $m_{\varrho}<M_{\varrho}$, equality in the lower bound for $D$
holds exactly when $\varrho(\{m_{\varrho},M_{\varrho}\})=1$, and
the maximal gap $(\sqrt{M_{\varrho}}-\sqrt{m_{\varrho}})^{2}$ occurs
exactly for
\begin{equation}
\varrho(\{m_{\varrho}\})=\frac{\sqrt{m_{\varrho}}}{\sqrt{m_{\varrho}}+\sqrt{M_{\varrho}}},\qquad\varrho(\{M_{\varrho}\})=\frac{\sqrt{M_{\varrho}}}{\sqrt{m_{\varrho}}+\sqrt{M_{\varrho}}}.\label{eq:maximal-gap-law}
\end{equation}
For a two-point law $\varrho=p\delta_{s_{1}}+(1-p)\delta_{s_{2}}$
the first representation in \eqref{eq:exact-gap-identity} reads
\begin{equation}
h-D=\frac{p(1-p)(s_{1}-s_{2})^{2}}{ps_{2}+(1-p)s_{1}}.\label{eq:two-component-gap}
\end{equation}
The bounds \eqref{eq:sharp-gap-bounds} and \eqref{eq:gap-curvature}
remain valid with any interval $[m_{\varrho},M_{\varrho}]\subseteq[\alpha,\alpha']\subset(0,\infty)$
in place of $[m_{\varrho},M_{\varrho}]$; in particular,
\begin{equation}
h(\nu)-D_{0}(\nu)\leq(\sqrt{M_{\varrho}}-\sqrt{m_{\varrho}})^{2}\leq(\sqrt{\sigma_{+}}-\sqrt{\sigma_{-}})^{2}.\label{eq:spectral-gap-bound}
\end{equation}
If $X_{k}\to s$ almost everywhere, then $\varrho=s_{*}\nu$ and $[m_{\varrho},M_{\varrho}]=[\dim_{*}(\nu),\mathrm{Dim}^{*}(\nu)]$,
so that
\begin{equation}
h(\nu)-D_{0}(\nu)\leq\bigl(\sqrt{\mathrm{Dim}^{*}(\nu)}-\sqrt{\dim_{*}(\nu)}\bigr)^{2}\leq(\sqrt{\sigma_{+}}-\sqrt{\sigma_{-}})^{2}.\label{eq:classical-gap-bound}
\end{equation}
\end{theorem}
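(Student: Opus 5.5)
Everything reduces to elementary inequalities for a probability measure $\varrho$ on a compact interval of $(0,\infty)$, once Proposition~\ref{prop:spectral-window}(3) is invoked. That proposition gives that $\varrho$ is carried by $J_{\nu}=[\sigma_{-},\sigma_{+}]$, hence $0<a\leq\sigma_{-}\leq m_{\varrho}\leq M_{\varrho}\leq\sigma_{+}$ by Lemma~\ref{lem:stopping-size}(2). It also gives $h=\int y\d\varrho$, $D=(\int y^{-1}\d\varrho)^{-1}$ and $\beta_{k}''(1)/(k\ln2)\to\operatorname{Var}(\varrho)$. From then on $\nu$ plays no role. I will prove every inequality for an arbitrary law supported in $[\alpha,\alpha']\subset(0,\infty)$. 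This automatically yields the version with enlarged intervals, and \eqref{eq:spectral-gap-bound} follows by taking $[\alpha,\alpha']=[\sigma_{-},\sigma_{+}]$.

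\emph{Exact identities.} Expand the squares. Using $\int\d\varrho=1$, $\int y\d\varrho=h$ and $\int y^{-1}\d\varrho=1/D$,
\[
\int\frac{(y-D)^{2}}{y}\d\varrho=h-2D+\frac{D^{2}}{D}=h-D,\qquad\int\frac{(y-h)^{2}}{y}\d\varrho=h-2h+\frac{h^{2}}{D}=\frac{h}{D}(h-D).
\]
This is \eqref{eq:exact-gap-identity}. For \eqref{eq:gap-curvature}, bound $1/y$ between $1/M_{\varrho}$ and $1/m_{\varrho}$ in the second representation and use $\int(y-h)^{2}\d\varrho=\operatorname{Var}(\varrho)$. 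Hence $h=D$ iff $\operatorname{Var}(\varrho)=0$ iff $\varrho=\delta_{h}$. By the convergence of the normalised curvature, this is equivalent to $\beta_{k}''(1)=o(k)$. For the two-point law, insert $\varrho=p\delta_{s_{1}}+(1-p)\delta_{s_{2}}$ into the first representation, with $D=s_{1}s_{2}/(ps_{2}+(1-p)s_{1})$; a short computation gives \eqref{eq:two-component-gap}.

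\emph{Sharp bounds.} $D\leq h$ is Jensen (or the nonnegativity above). For the lower bound use the chord inequality $1/y\leq(m+M-y)/(mM)$ on $[m,M]$, which is convexity of $1/y$. Integrating gives $1/D\leq(m+M-h)/(mM)$. Equality holds iff $\varrho$ is carried by $\{m,M\}$, since convexity is strict. For the upper gap bound, fix $h$. The lower bound on $D$ gives
\[
h-D\leq g(h)\coloneqq h-\frac{mM}{m+M-h},\qquad h\in[m,M].
\]
This is maximised where $(m+M-h)^{2}=mM$, i.e.\ $h=m+M-\sqrt{mM}$, with value $m+M-2\sqrt{mM}=(\sqrt{M}-\sqrt{m})^{2}$. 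Equality forces a two-point law on $\{m,M\}$ with mean $m+M-\sqrt{mM}$. Solving for the weights gives $\varrho(\{M\})=(M-\sqrt{mM})/(M-m)=\sqrt{M}/(\sqrt{m}+\sqrt{M})$, which is \eqref{eq:maximal-gap-law}. Because only $\operatorname{supp}\varrho\subseteq[\alpha,\alpha']$ is used, everything holds with $[\alpha,\alpha']$ in place of $[m_{\varrho},M_{\varrho}]$. The function $(\sqrt{M}-\sqrt{m})^{2}$ is monotone under enlarging the interval, which gives \eqref{eq:spectral-gap-bound}.

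\emph{The a.e.\ case.} This is the only step requiring outside input, and I expect it to be the main (mild) obstacle. If $X_{k}\to s$ a.e., then $\varrho=s_{*}\nu$ by Remark~\ref{cor:local-harmonic}. Then $m_{\varrho}=\operatorname{ess\,inf}s$ and $M_{\varrho}=\operatorname{ess\,sup}s$, since the support of an image law has as extremes the essential extremes. By \eqref{eq:measure-dimensions-local} with $\underline{s}=\overline{s}=s$, these equal $\dim_{*}(\nu)$ and $\mathrm{Dim}^{*}(\nu)$. The outer inequality in \eqref{eq:classical-gap-bound} then follows from \eqref{eq:full-entropy-enclosure} (or directly from $m_{\varrho}\geq\sigma_{-}$, $M_{\varrho}\leq\sigma_{+}$).
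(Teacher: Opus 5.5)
Your proposal is correct and follows the paper's proof essentially step by step: you invoke Proposition~\ref{prop:spectral-window}(3), expand the squares, apply the convex secant bound for $1/y$, maximise $h-m_{\varrho}M_{\varrho}/(m_{\varrho}+M_{\varrho}-h)$, and identify the essential extrema in the almost-everywhere case. The only difference is cosmetic. You get the enlarged-interval version by rerunning the arguments under $\operatorname{supp}\varrho\subseteq[\alpha,\alpha']$, whereas the paper uses monotonicity of the bounds in $m_{\varrho}$ and $M_{\varrho}$; the paper also notes that the extremal laws are realised by Corollary~\ref{cor:prescribed-local-law}, which the statement itself does not require.
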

\begin{proof}
By Proposition~\ref{prop:spectral-window}(3), $\varrho$ is carried
by $[\sigma_{-},\sigma_{+}]$, so $\sigma_{-}\leq m_{\varrho}\leq M_{\varrho}\leq\sigma_{+}$,
and $\sigma_{-}\geq a>0$ by Lemma~\ref{lem:stopping-size}. By the
same proposition, $h=\int y\d\varrho(y)$, $D^{-1}=\int y^{-1}\d\varrho(y)$
and $\beta_{k}''(1)/(k\ln2)\to\operatorname{Var}(\varrho)$; both
integrals are finite because $\varrho$ is carried by $[m_{\varrho},M_{\varrho}]$
with $m_{\varrho}>0$. Since $\varrho$ is a probability measure,
expanding the two integrands of \eqref{eq:exact-gap-identity} gives
\[
\int\frac{(y-D)^{2}}{y}\d\varrho=\int y\d\varrho-2D+D^{2}\int\frac{d\varrho}{y}=h-2D+D=h-D
\]
and
\[
\int\frac{(y-h)^{2}}{y}\d\varrho=\int y\d\varrho-2h+h^{2}\int\frac{d\varrho}{y}=h-2h+\frac{h^{2}}{D}=\frac{h}{D}(h-D),
\]
which is \eqref{eq:exact-gap-identity}. Bounding $1/y$ between $1/M_{\varrho}$
and $1/m_{\varrho}$ in the second representation gives the variance
bounds in \eqref{eq:gap-curvature}. For $m_{\varrho}<M_{\varrho}$,
the classical convex secant bound (cf. Edmundson \cite{Edm57}) for
the strictly convex function $z\mapsto1/z$ is
\[
\frac{1}{z}\leq\frac{m_{\varrho}+M_{\varrho}-z}{m_{\varrho}M_{\varrho}}\qquad(m_{\varrho}\leq z\leq M_{\varrho}),
\]
with equality exactly at the endpoints. Integrating against $\varrho$
gives $D\geq m_{\varrho}M_{\varrho}/(m_{\varrho}+M_{\varrho}-h)$.
Maximising $h-m_{\varrho}M_{\varrho}/(m_{\varrho}+M_{\varrho}-h)$
over $h\in[m_{\varrho},M_{\varrho}]$ gives its unique maximum $(\sqrt{M_{\varrho}}-\sqrt{m_{\varrho}})^{2}$
at $h=m_{\varrho}+M_{\varrho}-\sqrt{m_{\varrho}M_{\varrho}}$, and
equality forces the endpoint law \eqref{eq:maximal-gap-law}, for
which $D=\sqrt{m_{\varrho}M_{\varrho}}$. If $m_{\varrho}=M_{\varrho}$,
all assertions reduce to $h=D=m_{\varrho}$. The two-point formula
\eqref{eq:two-component-gap} is the first representation evaluated
at $\varrho=p\delta_{s_{1}}+(1-p)\delta_{s_{2}}$. The lower bound
$m_{\varrho}M_{\varrho}/(m_{\varrho}+M_{\varrho}-h)$ is non-decreasing
in $m_{\varrho}$ and non-increasing in $M_{\varrho}$ for $h\in[m_{\varrho},M_{\varrho}]$,
and $(\sqrt{M_{\varrho}}-\sqrt{m_{\varrho}})^{2}$, $D/(hM_{\varrho})$
and $D/(hm_{\varrho})$ are monotone in the same directions, which
gives the passage to a larger interval and, with $[\sigma_{-},\sigma_{+}]$,
proves \eqref{eq:spectral-gap-bound}. If $X_{k}\to s$ almost everywhere,
the laws of $X_{k}$ converge to $\varrho=s_{*}\nu$, so $m_{\varrho}=\mathop{\mathrm{ess\,inf}}_{\nu}s$
and $M_{\varrho}=\mathop{\mathrm{ess\,sup}}_{\nu}s$, which equal
$\dim_{*}(\nu)$ and $\mathrm{Dim}^{*}(\nu)$ by \eqref{eq:measure-dimensions-local};
this turns \eqref{eq:spectral-gap-bound} into \eqref{eq:classical-gap-bound}.
Sharpness follows from Corollary~\ref{cor:prescribed-local-law},
which realises each of the endpoint laws by a one-dimensional Bernoulli
mixture.
\end{proof}

The second inequality in \eqref{eq:spectral-gap-bound} can be strict:
in Example~\ref{ex:law-not-ae} one has $\varrho=\delta_{s}$ and
hence $m_{\varrho}=M_{\varrho}=s$, while $[\sigma_{-},\sigma_{+}]=[s,1]$.
In \eqref{eq:classical-gap-bound} the inner bound is sharp and the
outer one is computable from the spectrum alone; the bounds in \eqref{eq:sharp-gap-bounds}
are attained by one-dimensional measures with prescribed local-dimension
laws in any interval $0<m_{\varrho}<M_{\varrho}\leq1$ (Corollary~\ref{cor:prescribed-local-law}).

\begin{remark}[Classical dimensions under local convergence]\label{rem:classical-local-convergence}
If the local limit $s$ exists, then $\varrho=s_{*}\nu$, $m_{\varrho}=\mathop{\mathrm{ess\,inf}}_{\nu}s$
and $M_{\varrho}=\mathop{\mathrm{ess\,sup}}_{\nu}s$, and
\[
\sigma_{-}\leq m_{\varrho}=\dim_{*}(\nu)=\mathrm{Dim}_{*}(\nu)\leq D_{0}(\nu)\leq h(\nu)\leq\dim^{*}(\nu)=\mathrm{Dim}^{*}(\nu)=M_{\varrho}\leq\sigma_{+},
\]
with $m_{\varrho}<D_{0}(\nu)<h(\nu)<M_{\varrho}$ strict for non-constant
$s$. Writing $\nu_{E}=\nu|_{E}/\nu(E)$,
\[
\inf_{\nu(E)>0}h(\nu_{E})=\inf_{\nu(E)>0}D_{0}(\nu_{E})=m_{\varrho},\qquad\sup_{\nu(E)>0}h(\nu_{E})=\sup_{\nu(E)>0}D_{0}(\nu_{E})=M_{\varrho},
\]
since normalised restrictions inherit the mass bound and the local
exponent, so that their dimensions are the conditional arithmetic
and harmonic means of $s$; restricting to $\{s<m_{\varrho}+\varepsilon\}$
or $\{s>M_{\varrho}-\varepsilon\}$ gives the endpoints. Thus the
monotone quantization envelopes of \cite[Section~4, (4.3)]{GL04}
equal the upper Hausdorff and packing dimensions in the local-limit
setting.\end{remark}

\subsection{Equality criteria and regular measures}

\begin{corollary}[A differentiable upper spectrum gives equality]\label{cor:KN-differentiable}Assume
\eqref{eq:uniform-dyadic-mass}. If $\overline{\beta}$ is differentiable
at $1$, then the local dimension exists and equals $s=-\overline{\beta}'(1)$
almost everywhere. In particular, 
\[
\underline{h}(\nu)=\overline{h}(\nu)=\underline{D}_{0}(\nu)=\overline{D}_{0}(\nu)=s.
\]
All four Hausdorff and packing dimensions have the same value.

\end{corollary}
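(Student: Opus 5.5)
Differentiability of $\overline{\beta}$ at $1$ means exactly that the reflected subdifferential collapses, $\sigma_{-}=\sigma_{+}=-\overline{\beta}'(1)=:s$. The plan is to feed this into the enclosures already proved, so that every quantity in the statement is squeezed into the single point $J_{\nu}=\{s\}$.

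First, Theorem~\ref{thm:spectral-enclosure}, via \eqref{eq:spectral-local-enclosure}, gives $\sigma_{-}\leq\underline{s}(x)\leq\overline{s}(x)\leq\sigma_{+}$ for $\nu$-almost every $x$. With $\sigma_{-}=\sigma_{+}=s$ this says that the dyadic local dimension exists and equals $s$ almost everywhere. By Lemma~\ref{lem:balls-restrictions} the ball local dimension then also exists and equals $s$ almost everywhere.

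Second, the chains \eqref{eq:full-entropy-enclosure} and \eqref{eq:full-quantization-enclosure} sandwich $\underline{h}(\nu)$, $\overline{h}(\nu)$, $\underline{D}_{0}(\nu)$ and $\overline{D}_{0}(\nu)$ between $\sigma_{-}$ and $\sigma_{+}$, so all four equal $s$. As a consistency check, one may instead use Remark~\ref{cor:local-harmonic}: the constant local dimension $s$ gives $h(\nu)=D_{0}(\nu)=s$ directly through the arithmetic and harmonic formulas \eqref{eq:local-arithmetic-formula}. For the entropy part it suffices to cite Proposition~\ref{prop:entropy-only}, and for the quantization part Lemma~\ref{lem:stopping-size}(2) together with Theorem~\ref{thm:adaptive-entropy}.

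Third, for the classical dimensions, \eqref{eq:measure-dimensions-local} expresses $\dim_{*},\dim^{*},\mathrm{Dim}_{*},\mathrm{Dim}^{*}$ as essential infima and suprema of $\underline{s}$ and $\overline{s}$. These are almost everywhere equal to the constant $s$, so all four dimensions equal $s$. Equivalently, Theorem~\ref{thm:spectral-enclosure} already places them in $J_{\nu}=\{s\}$.

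There is no real obstacle. The only point to check is that differentiability of the convex function $\overline{\beta}$ at $1$ (it is finite and convex on $(0,\infty)$ by Proposition~\ref{prop:entropy-only}) is equivalent to equality of its one-sided derivatives there. This is exactly the statement $\sigma_{-}=\sigma_{+}$ recorded after \eqref{eq:reflected-subdifferential}. Positivity $s\geq a>0$ comes from Lemma~\ref{lem:stopping-size}(2), so all reciprocals used in the harmonic formula are legitimate.
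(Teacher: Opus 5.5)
Your proposal is correct and matches the paper's proof: the collapse of \eqref{eq:spectral-local-enclosure} to the singleton $\{s\}$ gives the almost-everywhere local dimension. The dimension identities then follow from Remark~\ref{cor:local-harmonic} together with \eqref{eq:measure-dimensions-local}. Your primary route through the $\sigma_{\pm}$ sandwiches of Proposition~\ref{prop:entropy-only} and Lemma~\ref{lem:stopping-size}(2) is the independent check the paper itself also mentions.
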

\begin{proof}
The interval in \eqref{eq:spectral-local-enclosure} reduces to a
singleton. Remark~\ref{cor:local-harmonic} and \eqref{eq:measure-dimensions-local}
therefore prove all assertions. The dimension identity is also a direct
consequence of Kesseböhmer--Niemann \cite[Theorem~1.5(iii)]{KNneg}:
Lemma~\ref{lem:critical-zero} gives $r=\overline{\beta}(q_{r})/q_{r}$
and 
\[
q_{r}\longrightarrow1,\qquad\left.\frac{dq_{r}}{dr}\right|_{r=0}=\frac{1}{\overline{\beta}'(1)}.
\]
Thus their differentiability hypothesis at order zero holds, and $D_{0}(\nu)=-\overline{\beta}'(1)$.
Proposition~\ref{prop:entropy-only} gives the same entropy value
independently of Theorem~\ref{thm:adaptive-entropy}. As noted in
\cite{KNneg}, the order-zero statement there is itself obtained by
combining \cite{Zhu12} with \cite{Heu07}; the corollary is therefore
not new, but it identifies the differentiable case as the degenerate
instance of the enclosure \eqref{eq:spectral-local-enclosure}.
\end{proof}

\begin{remark}[The harmonic formula from Graf–Luschgy and Zhu]\label{prop:recovery-zhu}Zhu's
Theorem 2.1 gives $\mathop{\mathrm{ess\,inf}}_{\mu}\underline{d}_{\mu}\leq\underline{D}_{0}(\mu)\leq\overline{D}_{0}(\mu)\leq\mathop{\mathrm{ess\,sup}}_{\mu}\overline{d}_{\mu}$
for every compactly supported $\mu$ satisfying \eqref{eq:uniform-ball-mass},
with $\underline{d}_{\mu},\overline{d}_{\mu}$ the ball local dimensions,
and his Lemma 2.2 gives the harmonic bounds $(\sum_{j}p_{j}/\underline{D}_{0}(\mu_{j}))^{-1}\leq\underline{D}_{0}(\mu)\leq\overline{D}_{0}(\mu)\leq(\sum_{j}p_{j}/\overline{D}_{0}(\mu_{j}))^{-1}$
for two-component mixtures $\mu=\sum_{j}p_{j}\mu_{j}$, hence by induction
for finite ones, compare \cite[Example~4.1]{GL04} and \cite[Proposition~3.8]{Zhu15}.
Partitioning $[a,d]$ into disjoint half-open intervals (with the
last endpoint included), whose closures are $[a_{j},b_{j}]$ of length
at most $\delta$ and restricting $\nu$ to $E_{j}=\{x:s(x)\text{ lies in the }j\text{-th partition interval}\}$,
which by Lemma~\ref{lem:balls-restrictions} preserves the mass bound
and the local exponents, squeezes $\underline{D}_{0}(\nu)$ and $\overline{D}_{0}(\nu)$
between $(\sum_{j}p_{j}/a_{j})^{-1}$ and $(\sum_{j}p_{j}/b_{j})^{-1}$
with $p_{j}=\nu(E_{j})$; letting $\delta\downarrow0$ gives the harmonic
formula in \eqref{eq:local-arithmetic-formula}, and the same argument
for a partition of the range of $(\underline{s},\overline{s})$ gives
\eqref{eq:local-harmonic-bounds}. This argument uses the extremal
bounds and finite-mixture inequalities directly.\end{remark}

\section{Prescribed local dimensions via Bernoulli mixtures}

\label{sec:bernoulli-mixtures}

We now construct one-dimensional measures whose dyadic local dimension
exists almost everywhere but has a continuous, rather than finitely
supported, distribution. All these measures have full support $[0,1]$
and satisfy the standing hypothesis \eqref{eq:uniform-dyadic-mass}.
In particular, geometric separation of the different local-dimension
levels is not needed.

\subsection{A random-parameter construction}

For $p\in(0,1)$, let $\mu_{p}$ be the distribution on $[0,1]$ of
\[
X=\sum^{\infty}_{j=1}\frac{\omega_{j}}{2^{j}},
\]
where the digits $\omega_{j}$ are independent and satisfy $\mathbb{P}(\omega_{j}=1)=p$.
If $Q\in\D_{n}$ is represented by a binary word containing $k$ ones,
then 
\begin{equation}
\mu_{p}(Q)=p^{k}(1-p)^{n-k}.\label{eq:Bernoulli-cylinder}
\end{equation}
 Dyadic endpoints have zero mass and can be disregarded when identifying
a point with its binary expansion.

Fix $0<p_{-}<p_{+}\leq1/2$ and a Borel probability measure $\rho$
supported on $[p_{-},p_{+}]$. Define 
\begin{equation}
\nu_{\rho}(E)\coloneqq\int^{p_{+}}_{p_{-}}\mu_{p}(E)\d\rho(p)\label{eq:bernoulli-mixture}
\end{equation}
for Borel sets $E\subseteq[0,1]$. Equivalently, first choose a single
parameter $P$ with distribution $\rho$ and, conditional on $P=p$,
generate all digits independently with success probability $p$. Thus
\begin{equation}
\nu_{\rho}(Q)=\int^{p_{+}}_{p_{-}}p^{k}(1-p)^{n-k}\d\rho(p).\label{eq:bernoulli-mixture-cylinder}
\end{equation}
The parameter is chosen once and then kept fixed; choosing an independent
parameter for every digit would instead produce one ordinary Bernoulli
measure. This random-parameter construction is the Bernoulli-mixture
representation associated with de Finetti's theorem; see \cite{Kirsch}.
We only use the explicit construction and prove its local-dimension
properties directly below.

\begin{proposition}[Local dimensions of Bernoulli mixtures]\label{prop:bernoulli-local-law}
Let $\nu_{\rho}$ be as in \eqref{eq:bernoulli-mixture}, and set
$h_{2}(p)\coloneqq-p\log p-(1-p)\log(1-p).$ Then $\nu_{\rho}$ is
atomless, has support $[0,1]$, and satisfies 
\begin{equation}
\nu_{\rho}(Q)\leq(1-p_{-})^{n}\quad(Q\in\D_{n}),\label{eq:bernoulli-mass-exponent}
\end{equation}
so that \eqref{eq:uniform-dyadic-mass} holds with $a=-\log(1-p_{-})>0$
and $C=1$. For $\nu_{\rho}$-almost every $x$, the binary digit
frequency $p(x)\coloneqq\lim_{n\to\infty}\frac{1}{n}\sum^{n}_{j=1}\omega_{j}(x)$
exists, and the dyadic local dimension satisfies 
\begin{equation}
s(x)\coloneqq\lim_{n\to\infty}\frac{-\log\nu_{\rho}(Q_{n}(x))}{n}=h_{2}(p(x)).\label{eq:bernoulli-local-limit}
\end{equation}
Moreover, 
\begin{equation}
p_{*}\nu_{\rho}=\rho,\qquad s_{*}\nu_{\rho}=(h_{2})_{*}\rho.\label{eq:bernoulli-local-pushforward}
\end{equation}
Consequently, both dimension limits exist and 
\begin{equation}
h(\nu_{\rho})=\int^{p_{+}}_{p_{-}}h_{2}(p)\d\rho(p),\qquad D_{0}(\nu_{\rho})=\left(\int^{p_{+}}_{p_{-}}\frac{1}{h_{2}(p)}\d\rho(p)\right)^{-1}.\label{eq:bernoulli-two-dimensions}
\end{equation}

\end{proposition}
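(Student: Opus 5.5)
The plan is to settle the elementary properties from the cylinder formula \eqref{eq:bernoulli-mixture-cylinder}, then the almost-sure statements through the random-parameter representation, and finally to invoke Remark~\ref{cor:local-harmonic}.

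\emph{Elementary properties.} Since $p\le p_{+}\le1/2$, we have $p\le1-p$ and $1-p\le1-p_{-}$ on $[p_{-},p_{+}]$. Hence $p^{k}(1-p)^{n-k}\le(1-p)^{n}\le(1-p_{-})^{n}$, and integrating against $\rho$ gives \eqref{eq:bernoulli-mass-exponent}. This bound gives atomlessness and \eqref{eq:uniform-dyadic-mass} with $a=-\log(1-p_{-})$ and $C=1$. For full support, every cylinder has mass at least $\int p^{n}\d\rho\ge p_{-}^{n}>0$. The dyadic boundary points form a countable set, so they carry no mass. The paper's normalisation puts the support in the interior of the root cube; this is only a cosmetic shift that changes generations by one, exactly as in Remark~\ref{rem:ahlfors-necessary}, and I would note it in one line.

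\emph{Digit frequency and pushforward.} Choose $P\sim\rho$. Conditionally on $P=p$, the digits are i.i.d.\ Bernoulli$(p)$, so the strong law gives $\frac1n\sum_{j\le n}\omega_j\to p$ almost surely. Integrating over $\rho$ (Fubini) shows that $p(x)$ exists $\nu_\rho$-a.e.\ and that $p(x)=P$ almost surely. Therefore $p_*\nu_\rho=\rho$.

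\emph{Local dimension (the main step).} Write $k_n(x)$ for the number of ones among the first $n$ digits and $f_n(p)=p^{k_n}(1-p)^{n-k_n}$. Then $\nu_\rho(Q_n(x))=\int f_n\d\rho$, and the task is to show $-\frac1n\log\int f_n\d\rho\to h_2(p(x))$. Put $\phi_\alpha(p)=\alpha\log p+(1-\alpha)\log(1-p)$ with $\alpha=k_n/n\to p(x)=:p_0$, so that $\frac1n\log f_n=\phi_{k_n/n}$.

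\emph{Upper bound on the mass.} Gibbs' inequality gives $\phi_\alpha(p)\le-h_2(\alpha)$ for every $p$, hence $\nu_\rho(Q_n)\le2^{-nh_2(k_n/n)}$. This yields $\liminf_n X_n\ge h_2(p_0)$.

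\emph{Lower bound on the mass.} This is the delicate direction, since $\rho$ might give little mass near $p_0$. I would use that $p_0$ is $\rho$-typical: for $\rho$-a.e.\ $p_0$, every interval $U_\delta=(p_0-\delta,p_0+\delta)$ has $\rho(U_\delta)>0$, because the complement of $\operatorname{supp}\rho$ is $\rho$-null. The function $\phi_\alpha(p)$ is uniformly continuous in $(\alpha,p)$ on compact subsets of $(0,1)^2$, and $[p_-,p_+]$ is such a subset. Consequently, for large $n$ and all $p\in U_\delta$, $\phi_{k_n/n}(p)\ge-h_2(p_0)-\eta(\delta)$ with $\eta(\delta)\to0$. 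This gives
\[
\nu_\rho(Q_n)\ge\rho(U_\delta)\,2^{-n(h_2(p_0)+\eta(\delta))}.
\]
Since $\rho(U_\delta)>0$ is fixed in $n$, taking $-\frac1n\log$ and letting $n\to\infty$ and then $\delta\downarrow0$ yields $\limsup_n X_n\le h_2(p_0)$. Together with the upper bound this proves \eqref{eq:bernoulli-local-limit}.

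\emph{Conclusion.} From \eqref{eq:bernoulli-local-limit} and $p_*\nu_\rho=\rho$ we get $s_*\nu_\rho=(h_2)_*\rho$, which completes \eqref{eq:bernoulli-local-pushforward}. Finally, Remark~\ref{cor:local-harmonic} (the a.e.\ case of Proposition~\ref{prop:spectral-window}) gives $h=\int s\d\nu_\rho$ and $D_0=(\int s^{-1}\d\nu_\rho)^{-1}$. Changing variables through the pushforward turns these into \eqref{eq:bernoulli-two-dimensions}; the reciprocals are finite because $h_2\ge h_2(p_-)>0$ on $[p_-,p_+]$.
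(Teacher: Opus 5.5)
Your proposal is correct and follows the paper's proof essentially step for step. The steps match: the cylinder bound from $p\le 1-p\le 1-p_{-}$; the conditional strong law giving $p_{*}\nu_{\rho}=\rho$ and hence $p(x)\in\operatorname{supp}\rho$ almost everywhere; a two-sided estimate of $\frac{1}{n}\log\int 2^{n\phi_{k_n/n}}\,\mathrm{d}\rho$, whose lower half uses $\rho(U_\delta)>0$ near $p(x)$; and finally Remark~\ref{cor:local-harmonic}. The only cosmetic difference is the upper mass bound $\nu_{\rho}(Q_n(x))\le 2^{-nh_2(k_n/n)}$: you get it directly from Gibbs' inequality, whereas the paper uses uniform convergence of $f_n$ to $f_p$ together with the maximum of $f_p$ at $v=p$.
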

\begin{proof}
Since $p_{-}\leq p\leq p_{+}\leq1/2$, every factor in \eqref{eq:Bernoulli-cylinder}
is at most $1-p_{-}$. Integration proves \eqref{eq:bernoulli-mass-exponent}.
All dyadic intervals have positive mass, so the support is $[0,1]$.
The same uniform bound tends to zero with $n$ and proves atomlessness,
including at the dyadic endpoints.

Conditional on $P=p$, the strong law of large numbers gives $p(X)=p$
almost surely. Integrating this statement with respect to $\rho$
proves existence of the digit frequency and $p_{*}\nu_{\rho}=\rho$.
In particular, $p(x)\in\operatorname{supp}\rho$ for $\nu_{\rho}$-almost
every $x$.

It remains to compute the cylinder exponent of the mixture itself,
rather than that of the selected component. Let $x$ have digit frequencies
$p_{n}(x)\to p\in\operatorname{supp}\rho$, and put 
\[
f_{n}(v)\coloneqq p_{n}(x)\log v+(1-p_{n}(x))\log(1-v),\qquad v\in[p_{-},p_{+}].
\]
These functions converge uniformly on $[p_{-},p_{+}]$ to 
\[
f_{p}(v)\coloneqq p\log v+(1-p)\log(1-v).
\]
The function $f_{p}$ attains its maximum at $v=p$, because $f_{p}'(v)=\left(p-v\right)/\left(v(1-v)\ln2\right).$By
\eqref{eq:bernoulli-mixture-cylinder}, 
\[
\nu_{\rho}(Q_{n}(x))=\int^{p_{+}}_{p_{-}}2^{nf_{n}(v)}\d\rho(v).
\]
The upper bound by $2^{n\sup f_{n}}$ gives 
\[
\limsup_{n\to\infty}\frac{1}{n}\log\nu_{\rho}(Q_{n}(x))\leq f_{p}(p).
\]
 For the reverse bound, fix $\delta>0$. Continuity gives a relative
neighbourhood $U$ of $p$ in $[p_{-},p_{+}]$ on which $f_{p}(v)\geq f_{p}(p)-\delta$.
Since $p\in\operatorname{supp}\rho$, we have $\rho(U)>0$. Uniform
convergence therefore gives, for all sufficiently large $n$,
\[
\nu_{\rho}(Q_{n}(x))\geq\rho(U)2^{n(f_{p}(p)-2\delta)}.
\]
After taking logarithms, dividing by $n$, and letting first $n\to\infty$
and then $\delta\downarrow0$, we obtain 
\[
\lim_{n\to\infty}\frac{1}{n}\log\nu_{\rho}(Q_{n}(x))=p\log p+(1-p)\log(1-p).
\]
This proves \eqref{eq:bernoulli-local-limit} and the second identity
in \eqref{eq:bernoulli-local-pushforward}. No density or regularity
of $\rho$ was used. Finally, \eqref{eq:bernoulli-mass-exponent}
permits application of Remark~\ref{cor:local-harmonic}, which yields
\eqref{eq:bernoulli-two-dimensions}.
\end{proof}

\begin{remark}[Uniform parameters versus uniform dimensions] For
the uniform parameter law $d\rho(p)=dp/(p_{+}-p_{-})$ on $[p_{-},p_{+}]$
the local dimension has law $(h_{2})_{*}\rho$ with essential range
$[h_{2}(p_{-}),h_{2}(p_{+})]$, and the all-zero cylinder has maximal
mass $\max_{Q\in\D_{n}}\nu_{\rho}(Q)=\frac{(1-p_{-})^{n+1}-(1-p_{+})^{n+1}}{(p_{+}-p_{-})(n+1)}$,
so $\dim_{\infty}(\nu_{\rho})=-\log(1-p_{-})>0$. To prescribe the
dimension law itself, rather than the parameter law, change the mixing
weights as follows.\end{remark}

\subsection{Prescribing the distribution of the local dimension}

Let $\psi:[0,1]\to[0,1/2]$ denote the inverse of the continuous,
strictly increasing function $h_{2}|_{[0,1/2]}$.

\begin{corollary}[Arbitrary prescribed local-dimension law]\label{cor:prescribed-local-law}
Let $0<s_{-}<s_{+}\leq1$, and let $\eta$ be any Borel probability
measure supported on $[s_{-},s_{+}]$. Define 
\[
\nu_{\eta}\coloneqq\int^{s_{+}}_{s_{-}}\mu_{\psi(s)}\d\eta(s).
\]
Then $\nu_{\eta}$ is atomless, has full support $[0,1]$, and satisfies
\[
\nu_{\eta}(Q)\leq(1-\psi(s_{-}))^{n}\quad(Q\in\D_{n}).
\]
Its dyadic local dimension $s(x)$ exists $\nu_{\eta}$-almost everywhere
and has the prescribed law 
\[
s_{*}\nu_{\eta}=\eta.
\]
In particular, 
\[
h(\nu_{\eta})=\int^{s_{+}}_{s_{-}}s\d\eta(s),\qquad D_{0}(\nu_{\eta})=\left(\int^{s_{+}}_{s_{-}}s^{-1}\d\eta(s)\right)^{-1}.
\]

\end{corollary}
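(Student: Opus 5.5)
The plan is to reduce the corollary to Proposition~\ref{prop:bernoulli-local-law} by a change of variables in the mixing parameter. Put $\rho\coloneqq\psi_{*}\eta$. Since $\psi$ is continuous and strictly increasing on $[0,1]$, $\rho$ is a Borel probability measure carried by $[p_{-},p_{+}]$ with $p_{\pm}\coloneqq\psi(s_{\pm})$. Because $0<s_{-}<s_{+}\leq1$, we get $0<p_{-}<p_{+}\leq\psi(1)=1/2$, which is exactly the hypothesis of that proposition. The change-of-variables formula for image measures then gives, for every Borel set $E$,
\[
\nu_{\eta}(E)=\int^{s_{+}}_{s_{-}}\mu_{\psi(s)}(E)\d\eta(s)=\int^{p_{+}}_{p_{-}}\mu_{p}(E)\d\rho(p)=\nu_{\rho}(E).
\]
Measurability of $s\mapsto\mu_{\psi(s)}(E)$ is inherited from that of $p\mapsto\mu_{p}(E)$, which is implicit in \eqref{eq:bernoulli-mixture}; it is clear for cylinders by \eqref{eq:Bernoulli-cylinder} and extends by a monotone class argument. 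Hence $\nu_{\eta}=\nu_{\rho}$.

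Proposition~\ref{prop:bernoulli-local-law} then supplies most of the claims directly. It gives atomlessness and full support $[0,1]$. It gives the mass bound \eqref{eq:bernoulli-mass-exponent}, namely $\nu_{\eta}(Q)\leq(1-p_{-})^{n}=(1-\psi(s_{-}))^{n}$. It also gives almost-everywhere existence of the local dimension, with $s_{*}\nu_{\eta}=(h_{2})_{*}\rho=(h_{2}\circ\psi)_{*}\eta=\eta$, because $h_{2}\circ\psi=\mathrm{id}$ on $[0,1]$.

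For the two dimension formulas, we start from \eqref{eq:bernoulli-two-dimensions} and substitute $p=\psi(s)$ again. This gives $\int h_{2}\d\rho=\int s\d\eta(s)$ and $\int h_{2}^{-1}\d\rho=\int s^{-1}\d\eta(s)$. Both integrals are finite, since $h_{2}\geq h_{2}(p_{-})=s_{-}>0$ on the support of $\rho$.

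No step here is a genuine obstacle. The only points needing care are checking that $p_{+}\leq1/2$, which holds because $s_{+}\leq1$, and that $p_{-}>0$, which holds because $s_{-}>0$ and $\psi(0)=0$ with $\psi$ strictly increasing. Everything substantive, the Laplace-type estimate for the mixture cylinders, was already done in Proposition~\ref{prop:bernoulli-local-law}.
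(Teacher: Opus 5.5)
Your proposal is correct and matches the paper's proof: set $\rho=\psi_{*}\eta$ on $[\psi(s_{-}),\psi(s_{+})]$, apply Proposition~\ref{prop:bernoulli-local-law}, and use $h_{2}\circ\psi=\mathrm{id}$ to transfer the local-dimension law, the mass bound and the two integral formulas. Your additional checks ($0<p_{-}<p_{+}\leq1/2$, measurability of $s\mapsto\mu_{\psi(s)}(E)$, finiteness of the harmonic integral) are details the paper leaves implicit.
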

\begin{proof}
Apply Proposition~\ref{prop:bernoulli-local-law} to $\rho=\psi_{*}\eta$
on $[\psi(s_{-}),\psi(s_{+})]$. Since $h_{2}\circ\psi$ is the identity,
the law of the local dimension is $(h_{2})_{*}(\psi_{*}\eta)=\eta$.
The mass bound and the two dimension formulas follow from the same
proposition.
\end{proof}

\begin{remark}[The full feasible region]\label{rem:feasible-region}
More precisely, for any $0<s_{-}<s_{+}\leq1$, every pair $(H,D)$
satisfying 
\[
s_{-}\leq H\leq s_{+},\qquad\frac{s_{-}s_{+}}{s_{-}+s_{+}-H}\leq D\leq H
\]
is realised by a measure with almost-everywhere local dimension in
$[s_{-},s_{+}]$. For $s_{-}<H<s_{+}$, set $w=(s_{+}-H)/(s_{+}-s_{-})$
and take 
\[
\eta_{\vartheta}=(1-\vartheta)\delta_{H}+\vartheta\bigl(w\delta_{s_{-}}+(1-w)\delta_{s_{+}}\bigr),\qquad0\leq\vartheta\leq1.
\]
Its arithmetic mean is $H$, while its harmonic mean is the reciprocal
of $(1-\vartheta)/H+\vartheta(s_{-}+s_{+}-H)/(s_{-}s_{+})$, which
traverses the whole admissible interval. Apply Corollary~\ref{cor:prescribed-local-law}.
The endpoint cases use a point mass. These are bounds on the allowed
local values; when $D=H$, their essential range necessarily reduces
to the singleton $\{H\}$.

\end{remark}
\begin{example}[A prescribed uniform local-dimension law]
\label{ex:uniform-local-dimension}\label{ex:half-to-one-local-dimension}
For $0<s_{-}<s_{+}\leq1$, take 
\[
\nu=\frac{1}{s_{+}-s_{-}}\int^{s_{+}}_{s_{-}}\mu_{\psi(s)}\d s.
\]
Then the almost-everywhere local limit has law $s_{*}\nu=\lambda|_{[s_{-},s_{+}]}/(s_{+}-s_{-})$,
the normalised Lebesgue measure on $[s_{-},s_{+}]$, the support is
$[0,1]$, and $\dim_{\infty}(\nu)>0$. Equivalently, with $p_{-}=\psi(s_{-})$
and $p_{+}=\psi(s_{+})$, the substitution $s=h_{2}(p)$ gives 
\[
\nu=\frac{1}{s_{+}-s_{-}}\int^{p_{+}}_{p_{-}}\log\frac{1-p}{p}\,\mu_{p}\d p.
\]
Corollary~\ref{cor:prescribed-local-law} gives the arithmetic and
harmonic means 
\[
h(\nu)=\frac{s_{-}+s_{+}}{2},\qquad D_{0}(\nu)=\frac{s_{+}-s_{-}}{\ln s_{+}-\ln s_{-}}<h(\nu).
\]
The quantization value is the logarithmic mean of the endpoints. By
Remark~\ref{rem:heurteaux-dimensions}, the classical measure dimensions
instead record the endpoints: 
\[
\dim_{*}(\nu)=\mathrm{Dim}_{*}(\nu)=s_{-},\qquad\dim^{*}(\nu)=\mathrm{Dim}^{*}(\nu)=s_{+}.
\]
In particular, $s_{-}=1/2$ and $s_{+}=1$ give 
\[
h(\nu)=\frac{3}{4},\qquad D_{0}(\nu)=\frac{1}{2\ln2}=0.721347\ldots.
\]
Every non-empty relatively open subinterval of $[s_{-},s_{+}]$ receives
positive probability under $s$, but each individual level set has
$\nu$-mass zero. Thus almost-everywhere existence of the local dimension
need not imply exact-dimensionality.
\end{example}

\end{document}